\DeclareMathOperator{\Hom}{Hom}
\DeclareMathOperator{\id}{id}
\DeclareMathOperator{\SO}{SO}
\DeclareMathOperator{\Sym}{Sym}
\DeclareMathOperator{\Aut}{Aut}
\DeclareMathOperator{\Out}{Out}
\DeclareMathOperator{\Res}{Res}
\DeclareMathOperator{\Ind}{Ind}
\DeclareMathOperator{\Mod}{-Mod}
\DeclareMathOperator{\Ext}{Ext}
\DeclareMathOperator{\Irr}{Irr}
\DeclareMathOperator{\FI}{FI}
\DeclareMathOperator{\Conf}{Conf}
\DeclareMathOperator{\Frob}{Frob}
\DeclareMathOperator{\UConf}{UConf}
\DeclareMathOperator{\FR}{FR}
\DeclareMathOperator{\gr}{gr}
\newtheorem{thm}{Theorem}[section]
\newtheorem{prop}[thm]{Proposition}
\newtheorem{cor}[thm]{Corollary}
\begin{document}

\title{\(\FI_G\)-modules, orbit configuration spaces,\\ and complex reflection groups}
\author{Kevin Casto}
\maketitle

\begin{abstract}
The category \(\FI_G\) was first defined and explored in \cite{SS2}. Here, we develop more of the machinery of \(\FI_G\)-modules and find numerous examples to apply it to, extending the work of \cite{CEF} and \cite{Wi}. 
In particular we develop a notion of character polynomials for \(\FI_G\)-modules with \(G\) finite, a notion of representation stability which we call \emph{\(K_0\)-stability} even when \(G\) is infinite virtually polycyclic, and apply the notion of \emph{finite presentation degree} when \(G\) is a general infinite group. We use this to analyze numerous families of \((G^n \rtimes S_n)\)-modules, such as:
\begin{itemize}
\item the cohomology and homotopy groups of orbit configuration spaces
\item the diagonal coinvariant algebra of complex reflection groups
\item the homology of affine pure braid groups of type \(\widetilde{A}_n\) and \(\widetilde{C}_n\)
\item the cohomology of Fouxe-Rabinowitsch groups
\end{itemize}
and many more examples.
\end{abstract}

\section{Introduction}
In \cite{Wi}, Wilson generalized the work of Church-Ellenberg-Farb \cite{CEF} on FI-modules to the setting of \(\FI_\mathcal{W}\)-modules, proving strong results about these and finding a wide range of interesting applications. In \cite{SS2}, Sam-Snowden substantially generalized Wilson's work to that of \(\FI_G\)-modules, for \(G\) a virtually polycyclic group, where Wilson's \(\FI_{\text{BC}} = \FI_{\mathbb{Z}/2\mathbb{Z}}\). Recall that a group is polycyclic if it has a subnormal series with cyclic factors, and virtually polycyclic if it has a polycyclic subgroup of finite index.

Sam-Snowden proved the Noetherian property for \(\FI_G\), and Gan-Li \cite{GL3} extended this to a proof of actual representation stability in the sense of \cite{CF}. However, few examples of \(\FI_G\)-modules have been discussed. Here, we extend Wilson's work to exhibit a wide class of examples of \(\FI_G\)-modules. For any virtually polycyclic group \(G\), we give applications to the following sequences of representations of \(W_n = G^n \rtimes S_n\) (see \S3.1 and \S7 for definitions):
\begin{enumerate}
\item The homology \(H_i(\Conf_n^G(M); \mathbb{C})\) of the orbit configuration space \(\Conf_n^G(M)\) associated to the action of \(G\) on an open manifold \(M\), and the cohomology \(H^i(\Conf_n^G(M); \mathbb{C})\) in the case where \(M\) need not be open but \(G\) is finite.
\item The rational homotopy groups \(\pi_i(\Conf_n(M))\) of an open manifold \(M\) of dimension \(\ge 3\) with \(G = \pi_1(M)\) virtually polycyclic, or the dual rational homotopy groups \(\Hom(\pi_i(\Conf_n(M)), \mathbb{Q})\) where \(M\) need not be open but \(G\) is finite.
\item The cohomology \(H^i(\FR(G^{\ast n}); \mathbb{C})\) of the Fouxe-Rabinovitch group of \(G\), where each \(H^i(G; \mathbb{Q})\) is finite-dimensional.
\end{enumerate}
For the specific case \(G = \mathbb{Z}/d\mathbb{Z}\), the resulting automorphism groups \(W_n = \left(\mathbb{Z}/d\mathbb{Z}\right)^n \wr S_n\) are the so-called main series of complex reflection groups. These directly generalize the Weyl groups of type \(BC_n\) of Wilson's paper, for which \(d = 2\). We give applications to the following sequence of representations of \(W_n\) (see \S4.1, \S4.2, \S4.3 for definitions):
\begin{enumerate}
\setcounter{enumi}{3}
\item The cohomology \(H^i(P(d,n); \mathbb{C})\) of the pure monomial braid group.
\item The graded pieces \(\gr P(d,n)^i\) of the associated graded Lie algebra of the pure monomial braid group.
\item The graded pieces \(\mathcal{C}_J^{(r)}(n)\) of the complex-reflection diagonal coinvariant algebra.
\end{enumerate}
Our work implies the following results about these sequences. Recall that the irreducible representations of \(W_n\) are parameterized by the partition-valued functions \(\underline \lambda\) on the irreducible representations of \(G\) with \(\|\underline \lambda\| = n\). Denote the  irreducible representation of \(W_n\) associated to \(\underline \lambda\) by \(L(\underline \lambda)\). Given partitions \(\lambda_1, \dots, \lambda_k\) with \(|\lambda_1| + \dots + |\lambda_k| = n\), and irreducible representations \(\chi_1, \dots, \chi_k\) of \(G\), we will write \(\underline\lambda = ( {\lambda_1}_{\chi_1}, \dots, {\lambda_2}_{\chi_2})\) for the partition-valued function \(\underline\lambda\) with \(\underline\lambda(\chi_i) = \lambda_i\).

Let \(c(G)\) be the set of conjugacy classes of \(G\). Define a \emph{character polynomial} to be a polynomial in \(c(G)\)-labeled cycle-counting functions. This is a class function on \(W_n\). (We will define this and other terminology more precisely in \S2.2).
\begin{thm}[\bfseries Polynomiality of characters and representation stability]
Suppose \(G\) is finite, and let \(\{V_n\}\) be any of the sequences 1-6 above. Then:\begin{enumerate}
\item The characters of \(V_n\) are given by a single character polynomial for all \(n \gg 0\).
\item The multiplicity of each irreducible \(W_n\)-representation in \(V_n\) is eventually independent of \(n\), and \(\dim V_n\) is eventually polynomial in \(n\). 

Moreover, for sequences 3 and 4, we can explicity bound when this stabilization occurs.
\end{enumerate}
\end{thm}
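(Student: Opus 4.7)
The strategy is uniform across all six sequences: promote each $\{V_n\}$ to an $\FI_G$-module, prove finite generation, and then invoke the general theorems developed earlier in the paper. For the orbit configuration space $\Conf_n^G(M)$, an $\FI_G$-morphism $(f,g)\colon [m]\to [n]$ induces a map $\Conf_n^G(M)\to\Conf_m^G(M)$ sending $(x_i)_{i=1}^n$ to $(g(j)\cdot x_{f(j)})_{j=1}^m$, giving the $\FI_G$-structure on (co)homology and, via Sullivan minimal models, on the rational homotopy groups. The Fouxe-Rabinovitch groups $\FR(G^{\ast n})$ carry block inclusions $G^{\ast m}\hookrightarrow G^{\ast n}$ determined by $(f,g)$, inducing an $\FI_G$-structure on cohomology. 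The pure monomial braid group $P(d,n)=\pi_1\Conf_n^{\mathbb{Z}/d\mathbb{Z}}(\mathbb{C}^\ast)$, its associated graded Lie algebra, and the complex-reflection diagonal coinvariant algebra all inherit $\FI_{\mathbb{Z}/d\mathbb{Z}}$-structures from the same principle applied to their defining constructions (block-diagonal inclusions of the appropriate polynomial rings, respectively of the appropriate arrangement complements).

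The main technical work is finite generation. For sequence 1 in the open case I would use a Leray spectral sequence for $\Conf_n^G(M)\hookrightarrow M^n$ (or a Totaro-style model); the $E_2$-page is built from $H^\ast(M)$ and finitely many small-configuration pieces via tensor and induction operations that preserve finite generation, and the Noetherian property of $\FI_G$-modules proved in \cite{SS2, GL3} propagates finite generation through the abutment. Sequence 2 reduces to sequence 1 via Sullivan models, which are functorial in the coefficient $\FI_G$-module. For sequence 3, an explicit model for $H^\ast(\FR(G^{\ast n}))$ exhibits generators of bounded $\FI_G$-degree. For sequences 4 and 5, an Orlik--Solomon-type presentation of $H^\ast(P(d,n))$ yields explicit generators and relations in low $\FI_{\mathbb{Z}/d\mathbb{Z}}$-degree, giving not just finite generation but the explicit stability bounds claimed. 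Sequence 6 follows from classical presentations of the diagonal coinvariant algebra.

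With finite generation established, part (1) follows from the general character-polynomial theorem (developed in \S2.2): any finitely generated $\FI_G$-module with $G$ finite has $\chi_{V_n}$ equal to the evaluation of a single character polynomial in $c(G)$-labeled cycle-counting functions for all $n\gg 0$. Part (2) follows from the Gan--Li representation stability theorem for finitely generated $\FI_G$-modules, together with the standard Hilbert-polynomial argument giving eventual polynomiality of $\dim V_n$. The effective generation- and relation-degree bounds obtained for sequences 3 and 4 translate directly into the explicit stability bounds asserted.

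The main obstacle will be finite generation for sequence 1 when $M$ is closed. The Leray approach for an open inclusion is then unavailable, so one must argue via an alternative model --- for instance a chain-level combinatorial or Kriz-type model that extends Totaro's to the orbit setting --- that remains functorial in $\FI_G$ and still yields a finitely generated $E_2$-page. A careful bookkeeping argument is needed to verify that the differentials in this spectral sequence are $\FI_G$-equivariant, so that the Noetherian property can be applied page-by-page to conclude finite generation of the abutment.
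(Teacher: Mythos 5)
Your overall strategy --- promote each sequence to an $\FI_G$-module, establish finite generation, then invoke the character-polynomial and representation-stability machinery from \S2 --- is exactly the paper's strategy. But the obstacle you flag as ``the main'' one is not actually an obstacle, and there's a small but real error in your account of sequence~3.

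On the closed-manifold worry: the Leray spectral sequence that Totaro and the paper use is for the open inclusion $\iota\colon \Conf_n^G(M)\hookrightarrow M^n$, and this inclusion is open for \emph{every} manifold $M$, closed or not, because the orbit configuration space is always an open subset of the product. So the Totaro-style argument in Theorem 3.2 (= \thref{confg_fg}) already applies uniformly to any connected orientable $M$ of dimension $\geq 2$ with finite $G$. You have conflated two different open/closed distinctions: the dichotomy that actually does matter for orbit configuration spaces is whether $M$ is the interior of a compact manifold with nonempty boundary, which is what Theorem 3.6 (= \thref{boundary_sharp}) needs in order to build the ``add points near infinity'' maps giving the $\FI_G\sharp$-structure. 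But that is only relevant when $G$ is infinite; for the finite-$G$ theorem you are proving, the ordinary $\FI_G$-structure on cohomology plus Noetherianity suffices, and no alternative Kriz-type model is required.

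On sequence 3: the cohomology $H^i(\FR(G^{\ast n}))$ carries an action of $\Aut(G)\wr S_n$ via the extension $1\to\FR(G^{\ast n})\to\Sigma\Aut(G^{\ast n})\to\Aut(G)\wr S_n\to 1$, and since inner automorphisms act trivially, the resulting $\FI$-category is $\FI_{\Out G}$, not $\FI_G$. Your block-inclusion heuristic does not produce a $G^n$-action on this cohomology. This is where the explicit stability bound for sequence~3 actually comes from in the paper: Griffin's isomorphism $H^\ast(\FR(G^{\ast n}))\cong H^\ast((G^{\ast n})^{n-1})$ realizes the cohomology as an $\FI_{\Out G}\sharp$-algebra generated in degree $2$, and Corollary~2.7 then gives the stability degree $\leq 4i$. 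You would get the analogous bound for sequence~4 from the Orlik--Solomon presentation together with the $\FI_G\sharp$-structure coming from $\mathbb{C}^\ast$ being the interior of a compact surface with boundary, as you suggest.

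One smaller remark: the paper handles sequence~2 by passing through Kupers--Miller's argument (finite generation of dual homotopy groups from finite generation of cohomology), rather than directly through functorial Sullivan models. Your route would also need care because of the non-simply-connected case; the paper sidesteps this by working with the orbit configuration space $\Conf_n^{\pi_1(M)}(\widetilde M)$, which \emph{is} simply connected once $\dim M\geq 3$, and applying the original Kupers--Miller machinery to that.
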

For example, for sequence 4, once \(n \ge 2\) we obtain:
\[H^1(P(d,n); \mathbb{C}) = L((n)_{\chi_0}) \oplus L((n-1,1)_{\chi_0}) \oplus \bigoplus_{\chi \in \Irr(G)} L((n-2)_{\chi_0}, (2)_\chi) \]
In particular, our results on \(H^i(\FR(\Gamma^{\ast n}); \mathbb{C})\) answer a question of Wilson, posed in \cite[\S 7.1.1]{Wi}, of how to generalize her Thm 7.3 to a general Fouxe-Rabinovitch group. Our results on \(\pi_i(\Conf_n(M))\) both confirm and correct a sketch given by Kupers-Miller \cite{KM} about the homotopy groups of configuration spaces of non-simply-connected manifolds, and answer their question about obtaining a form of representation stability for finite \(G\).
\subsection*{Infinite \(G\): \(K_0\)-stability and finite presentation degree}
When \(G\) is infinite virtually polycyclic, we still obtain a form of representation stability. For a ring \(R\), recall that the Grothendieck group \(G_0(R)\) is the free abelian group on finitely-generated \(R\)-modules, modulo the relations that the center of a short exact sequence is equal to the sum of the left and right terms. \(K_0(R)\) is defined similarly, but only for projective \(R\)-modules; these two groups coincide when \(R = k[G]\) for \(G\) virtually polycyclic. Say that a sequence \(V_n\) of \(W_n\)-representations satisfies \emph{\(K_0\)-stability} if the decomposition of \([V_n]\) in the \emph{Grothendieck group} \(K_0(k[W_n])\) is eventually independent of \(n\). We apply this notion to 1-3 above, and to the following two more examples:
\begin{enumerate} \setcounter{enumi}{6}
\item The homology \(H_i(P_{\widetilde{A}_{n-1}}; \mathbb{Q})\) of the affine pure braid group of type \(\widetilde{A}_{n-1}\), for which \(G = \mathbb{Z}\).
\item The homology \(H_i(P_{\widetilde{C}_{n}}; \mathbb{Q})\) of the affine pure braid group of type \(\widetilde{C}_{n}\), for which \(G = \mathbb{Z} \rtimes \mathbb{Z}/2\).
\end{enumerate}
\begin{thm}[\bfseries \(K_0\)-stability]
Let \(\{V_n\}\) be any of the sequences above (1-3,7,8) for which \(G\) is infinite virtually polycyclic. Then \(V_n\) satisfies \(K_0\)-stability.
\end{thm}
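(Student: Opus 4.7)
The plan is to reduce Theorem 1.2 to a general $K_0$-stability theorem for finitely generated $\FI_G$-modules over virtually polycyclic $G$, presumably one of the main results of the paper's section on $K_0$-stability. Once that reduction is available, it suffices to exhibit each $V_\bullet$ as (the underlying graded vector space of) a finitely generated $\FI_G$-module over such $G$.

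For sequences 1--3 the $\FI_G$-module structures are constructed exactly as in the finite $G$ case treated in Theorem 1.1: orbit configuration spaces $\Conf_n^G(M)$ are functorial in injections of finite sets, so their homology carries an $\FI_G$-action; the forgetful fibration $\Conf_n(M) \to \Conf_{n-1}(M)$ propagates this structure to the higher rational homotopy groups via the long exact sequence; and Fouxe-Rabinovitch cohomology decomposes into a controlled sum of tensor products of copies of $H^\ast(G; \mathbb{Q})$. I would check that the finite-generation arguments really only used finite-dimensionality of the relevant cohomology in each degree, not finiteness of $G$ itself. For sequences 7 and 8 I would identify the affine pure braid groups as fundamental groups of orbit configuration spaces: $P_{\widetilde{A}_{n-1}} = \pi_1(\Conf_n^{\mathbb{Z}}(\mathbb{C}))$ with $\mathbb{Z}$ acting on $\mathbb{C}$ by a fixed translation, and $P_{\widetilde{C}_n} = \pi_1(\Conf_n^{\mathbb{Z} \rtimes \mathbb{Z}/2}(\mathbb{C}))$ with the affine dihedral group acting on $\mathbb{C}$. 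These orbit configuration spaces are aspherical, so $H_i(P; \mathbb{Q})$ is just their rational homology, and sequence 1 with the appropriate pair $(G, M)$ then supplies the required $\FI_G$-module structure.

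To upgrade finite generation to $K_0$-stability, I would follow the approach of \cite{GL3}: a finitely generated $\FI_G$-module $V_\bullet$ admits, for $n \gg 0$, a stable filtration whose associated graded is a sum of induced representations $\Ind_{W_k \times W_{n-k}}^{W_n}(W \boxtimes \mathbf{1})$ with $W$ ranging over a fixed finite set of $W_k$-representations. Since $G$ is virtually polycyclic, $K_0(k[W_n]) = G_0(k[W_n])$, so every short exact sequence gives an additive relation in $K_0$, and the stable filtration descends to a stable class $[V_n] \in K_0(k[W_n])$.

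The main obstacle is sequence 8: because $\mathbb{Z} \rtimes \mathbb{Z}/2$ is nonabelian, I must check carefully that the natural $W_n = (\mathbb{Z} \rtimes \mathbb{Z}/2)^n \rtimes S_n$-action on the homology of $P_{\widetilde{C}_n}$ agrees with the Weyl-group action coming from the orbit configuration space, and that the map of braid groups induced by adjoining a strand is $\FI_G$-equivariant in a compatible way. A secondary difficulty is that in the infinite-$G$ case we no longer have character polynomials as a witness for stability, so the entire argument must be phrased intrinsically in terms of $\FI_G$-module presentations and induced representations rather than character-theoretic computations; this is why the reduction passes through finite generation and the structure of $K_0$ rather than through the explicit character machinery of \S2.2.
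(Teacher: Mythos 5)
Your overall strategy is correct and matches the paper's: establish a general $K_0$-stability theorem for finitely generated $\FI_G$-modules over virtually polycyclic $G$ (the paper does this in \S 2.3 via the Nagpal--Snowden filtration, as you anticipate), and then show each sequence carries a finitely generated $\FI_G$-module structure. However, there is a genuine gap in the reduction step, and a concrete error in the $\widetilde{C}_n$ case.

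The gap: you assert that for sequences 1--3 ``the finite-generation arguments really only used finite-dimensionality of the relevant cohomology in each degree, not finiteness of $G$ itself,'' and propose to carry the finite-$G$ arguments over directly. This is precisely what fails. As the paper explains in \S 3.2, when $G$ is infinite the $E_2$ page of the Leray spectral sequence for $\Conf^G_n(M) \hookrightarrow M^n$ is generated not by finite linear combinations of the classes $e_{a,g,b}$ but by \emph{infinite} ones (the dual to a free $k[G]$-module picks up $k^G$ rather than $k[G]$), so $H^*(\Conf^G_n(M))$ is not finitely generated as an $\FI_G$-module. Salvaging finite generation requires passing to \emph{homology}, which a priori is only a co-$\FI_G$-module; the paper gets around this by showing (Theorem~\ref{boundary_sharp}) that when $M$ is the interior of a compact manifold with nonempty boundary, $\Conf^G(M)$ is a homotopy $\FI_G\sharp$-space, so $H_*(\Conf^G(M))$ is an $\FI_G\sharp$-module, and then proves it is of finitely-generated type (Theorems~\ref{arnold} and~\ref{confg_fgsharp}). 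This openness hypothesis and the switch from cohomology to homology are exactly why the introduction states sequence~1 as the \emph{homology} of orbit configuration spaces of \emph{open} manifolds when $G$ is infinite. Your proposal never discovers this obstruction or the $\FI_G\sharp$ workaround.

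The error: for sequence 8 you claim $P_{\widetilde{C}_n} = \pi_1(\Conf_n^{\mathbb{Z}\rtimes\mathbb{Z}/2}(\mathbb{C}))$ ``with the affine dihedral group acting on $\mathbb{C}$.'' But the infinite dihedral group does not act freely on $\mathbb{C}$ (half-integers are fixed by reflections), so there is no such orbit configuration space, and moreover the orbit configuration space recipe applied to $\mathbb{C}$ would not produce the hyperplanes $z_i \in \tfrac12\mathbb{Z}$ that appear in the $\widetilde{C}_n$ arrangement. The paper's actual identification (Theorem 5.3) is $Y_n \cong \Conf^G_n(M)$ with $M = \mathbb{C} \setminus \tfrac12\mathbb{Z}$, obtained through a chain of covers and a M\"obius-plus-squaring homeomorphism, and the removed half-integer points are what supply the $z_i$-hyperplanes. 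Finally, the ``main obstacle'' you flag (nonabelian $G$ making the $W_n$-action hard to match) is not where the real difficulty lies; the difficulty is finding the right $(G,M)$ at all, and then presenting the Orlik--Solomon algebra of an infinite affine arrangement via Cohen--Xicot\'encatl's Proposition (\S 5) rather than the usual finite Orlik--Solomon theory.
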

For example, if \(U = k[\mathbb{Z}] = \Ind_{\mathbb{Z}/2}^{\mathbb{Z} \rtimes \mathbb{Z}/2} k\), then in the notation of \S2.3,
\[ [H_1(P_{\widetilde{C}_{n}}; \mathbb{Q})] = [L((n)_U)],\;\; \forall n \ge 1. \]
Note that in general we certainly lose information in passing to \(K_0\). For instance, in the above example, we actually explicitly determine in \S6.(\ref{affC}) that \(H_1(P_{\widetilde{C}_{n}}; \mathbb{Q})\) is the direct sum of three induced representations, two of which are killed in passing to \(K_0\), and the third giving the term \([L((n)_U)]\) above.

This is the first representation stability result obtained for infinite discrete groups (recall that \cite{CF} obtained representation stability results for Lie groups, whose representation theory is much more tractable). 

Furthermore, for a totally general group \(G\), we still obtain a weak form of finite generation. Say that an \(\FI_G\)-module is \emph{presented in finite degree} if it is finitely-generated and if the relations for the generators also live in finite degree. Ramos \cite{Ra2} and Li \cite{Li} have proven that the category of \(\FI_G\)-modules that are presented in finite degree is abelian. This allows us to carry over arguments about spectral sequences to conclude that cohomology is generated in finite degree. In particular, we have:
\begin{thm}[\bfseries Finite presentation degree]
The cohomology \(H^i(\Conf^G_n(M); \mathbb{Q})\) and the dual rational homotopy groups \(\Hom(\pi_i( \Conf_n(M)); \mathbb{Q})\) are \(\FI_G\)-modules presented in finite degree.
\end{thm}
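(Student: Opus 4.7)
My plan is to prove both assertions by the same template: produce a convergent first-quadrant spectral sequence of $\FI_G$-modules whose $E_2$-page is visibly presented in finite degree, then invoke the Ramos--Li result that $\FI_G$-modules presented in finite degree form an abelian subcategory of the category of $\FI_G$-modules. Since this subcategory is closed under kernels, cokernels and extensions, the property propagates through every page of the spectral sequence. Because each total degree in a first-quadrant spectral sequence is filtered by only finitely many subquotients of the $E_\infty$-page, the abutment itself sits in a finite iterated extension of $\FI_G$-modules presented in finite degree, and hence is presented in finite degree.

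\textbf{Cohomology of orbit configuration spaces.} For the first assertion, I would use the Leray spectral sequence associated to the $\FI_G$-equivariant open inclusion $\iota_n \colon \Conf_n^G(M) \hookrightarrow M^n$, which is functorial in $n$ both with respect to the ``forget-a-point'' structure maps and with respect to the diagonal action of $G^n$. Its $E_2$-page is assembled from $H^*(M; \mathbb{Q})^{\otimes n}$ together with local cohomology classes indexed by the $G$-orbit strata of the complement $M^n \setminus \Conf_n^G(M)$. In each fixed cohomological degree the tensor-power factor is presented in finite degree as an $\FI$-module because only a bounded number of tensor slots can carry positive-degree classes to contribute to that total degree, and the stratum contributions are indexed by partitions of a bounded subset of $\{1, \dots, n\}$ with $c(G)$-decorations, so they too are presented in finite degree as $\FI_G$-modules. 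The abelian-category argument above then finishes the first claim.

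\textbf{Dual rational homotopy groups.} For the second assertion, I would set up an analogous spectral sequence starting from the $G$-equivariant Fadell--Neuwirth tower that builds $\Conf_n(M)$ inductively as a sequence of fibrations whose fibers are orbit configuration spaces of punctured universal covers. Passing to rational homotopy groups and dualizing converts the long exact sequences of these fibrations into exact sequences of $\FI_G$-modules; alternatively, one can work with the functorial Sullivan model. Either way, the $E_2$-page is built from dual rational homotopy groups of $M$ and combinatorial stratum data, which I would verify is presented in finite degree by essentially the same analysis as above, and then the Ramos--Li closure property carries us to the abutment $\Hom(\pi_i(\Conf_n(M)),\mathbb{Q})$.

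\textbf{Main obstacle.} The principal technical hurdle is verifying that the $\FI_G$-module structure on the $E_2$-page really is the one induced by the ``add a new $G$-orbit of points'' operation, and pinning down uniform bounds on the generators and relations as $n$ grows. The homotopy-group case is additionally delicate because rational homotopy functoriality requires care (nilpotence of $\Conf_n(M)$, compatibility of Sullivan models with the tower of fibrations); the dualization and the $G$-equivariant refinement of Fadell--Neuwirth are precisely what keeps this in the $\FI_G$-framework.
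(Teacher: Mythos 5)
Your approach to the cohomology statement is essentially the paper's: the Leray spectral sequence for $\Conf_n^G(M) \hookrightarrow M^n$, finite presentation degree of the $E_2$-page, and propagation to the abutment via the Ramos--Li theorem that $\FI_G$-modules presented in finite degree form an abelian category. But your sketch of why the $E_2$-page is presented in finite degree misses the subtlety that motivates this theorem in the first place. For infinite $G$ the diagonal classes $e_{a,g,b}$ with $g\in G$ form an infinite set, and what actually appears in the $E_2$-page is not the span of these classes but its dual --- the space of possibly infinite linear combinations $\sum_{g\in G} v_g e_{a,g,b}$. This space is genuinely infinitely generated as a $k$-module and as a $W_n$-module; the paper's observation is that it is nonetheless generated in degree $2$ (by the sums $\sum_g v_g e_{1,g,2}$) and related in degree $3$ as an $\FI_G$-module. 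Your ``bounded number of tensor slots'' argument handles the $H^*(M^n)$ factor but does not address this point. Also, the decorations on the generators are by elements of $G$, not conjugacy classes $c(G)$; the latter only enter when passing to character polynomials.

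For the dual rational homotopy groups, the paper's route differs from your primary proposal and is simpler: it does not set up a Fadell--Neuwirth tower but bootstraps directly from the cohomology result. Following Kupers--Miller, $\Conf^{\pi_1(M)}_n(\widetilde{M})$ is the universal cover of $\Conf_n(M)$ once $\dim M \ge 3$, so they share higher homotopy groups, and $\Hom(\pi_i(\Conf_n(M)); \mathbb{Q})$ is computed from $H^*(\Conf^G_n(\widetilde{M}); \mathbb{Q})$ by a further functorial spectral sequence coming from rational homotopy theory, to which the same Ramos--Li propagation applies. This is essentially your secondary Sullivan-model suggestion. Your primary Fadell--Neuwirth route would face real obstacles here: the tower is not naturally $\FI_G$-functorial since it privileges one point at a time, the long exact sequences in homotopy it produces do not by themselves assemble into a spectral sequence of $\FI_G$-modules converging to $\Hom(\pi_i, \mathbb{Q})$, and the fibers are punctured $M$ (not orbit configuration spaces of punctured universal covers) in a form that does not obviously carry $\FI_G$-structure.
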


\subsection*{Application: arithmetic statistics for Gauss sums}
In the sequel \cite{Ca2}, we apply the theory of representation stability for \(\FI_G\)-modules to obtain results about weighted point-counts for polynomials over finite fields. For example, we are able to obtain results such as the following. Let \(\chi\) be a character of \(\mathbb{Z}/(q-1)\). Define the character polynomial \(X_i^\chi = \sum_{g \in G} \chi(g) X^g_i\). Then
\begin{gather} \begin{aligned} \label{X2_pcount}
\lim_{n \to \infty} &\sum_{f \in \UConf_n(\mathbb{F}_q^*)} \sum_{\deg(p) = 2} \chi(\text{root}(p))  \\
&= \sum_i (-1)^i \frac{\langle X_2^{\overline \chi}, H^i(\Conf^{\mathbb{Z}/(q-1)}(\mathbb{C}^*); \mathbb{Q}(\zeta_{q-1}))\rangle}{q^i} = \frac{1}{2q} + \frac{2}{q^2} + \cdots
\end{aligned} \end{gather}
That is, the average value of the Gauss sum obtained by applying \(\chi\) to the \emph{quadratic} factors of \(f\), across all \(f \in \UConf_n(\mathbb{F}^*_q)\), is equal to the series on the right obtained by looking at the inner product of the character polynomial \(X_2^{\overline \chi}\) with \(H^i(\Conf^{\mathbb{Z}/(q-1)}(\mathbb{C}^*); \mathbb{Q}(\zeta_{q-1}))\). The fact that the sum on the right is independent of \(n\) for large \(n\) is precisely a consequence of representation stability for the \(\FI_G\)-module \(H^i(\Conf^{\mathbb{Z}/(q-1)}(\mathbb{C}^*))\).

\subsection*{Acknowledgements}
I would like to thank Nir Gadish, Weiyan Chen, Jesse Wolfson, John Wiltshire-Gordon, Eric Ramos, and Jeremy Miller for helpful conversations. Also, I am very grateful to my advisor, Benson Farb, for all his guidance throughout the process of working on, writing, and revising my first paper.

\section{\(\FI_G\)-modules and their properties}
If \(G\) is a group, and \(R\) and \(S\) sets, define a \emph{\(G\)-map} \((a, (g_i)): R \to S\) to be a pair \(a: R \to S\) and \((g_i) \in G^R\). If \((b, (h_j)): S \to T\) is another \(G\)-map, their composition is \((b \circ a, (g_i \cdot h_{a(i)}))\). Let \(\FI_G\) be the category with objects finite sets and morphisms \(G\)-maps with the function \(a\) injective. This is clearly equivalent to the full subcategory with objects the sets \([n] = \{1, \dots, n\}\). Note that the automorphism group of \([n]\) is
\[W_n := G \wr S_n = G^n \rtimes S_n\]
An \emph{\(\FI_G\)-module over \(k\)} is just a functor \(V: \FI_G \to k\Mod\); when \(k\) is clear, we simply write \(\FI_G\)-modules. These form a category, called \(\FI_G\Mod\). Thus an \(\FI_G\)-module \(V\) is a sequence of \(W_n\)-representations \(V_n\), with maps \(V_n \to V_{n+1}\) satisfying certain coherency conditions. Say that \(V\) is \emph{finitely generated} if there is a finite set of elements \(v_1, \dots, v_n \in V\) such that the smallest \(\FI_G\)-submodule containing the \(v_i\) is all of \(V\).

For \(m \ge 0\), define the ``free'' \(\FI_G\)-module \(M(m)\) by setting
\[M(m)_n = \begin{cases} 0 & n < m \\ k[\Hom_{\FI_G}([m], [n])] & n \ge m \end{cases}\]
Recall that for an \(\FI_G\)-module \(V\) and an element \(v \in V_m\), we can also characterize the submodule generated by \(v\) as the image of the map
\[ M(m) \to V, \;\; f \in \Hom_{\FI_G}([m], [n]) \mapsto f_* v \]
We can thus characterize finitely generated \(\FI_G\)-modules as those \(V\) that admit a surjection \(\bigoplus_{i=1}^N M(m_i) \twoheadrightarrow V\).

\subsection{Noetherianity and representation stability}

Recall that a group is \emph{polycyclic} if it has a composition series with cyclic factors, and is \emph{virtually polycyclic} if it has a polycyclic subgroup of finite index. Virtually polycyclic groups are of interest, among other things, because they are the only known groups to have Noetherian group rings, and are conjectured to be the only such groups. In \cite[Cor 1.2.2]{SS2}, Sam-Snowden proved that if \(G\) is a virtually polycyclic group, then \(\FI_G\Mod\) is Noetherian over any Noetherian ring \(k\). That is, they proved that any finitely generated \(\FI_G\)-module has all its submodules finitely generated. The crucial property used was that the group ring \(k[G]\) is Noetherian.

For FI-modules, the most important consequence of finite generation is representation stability, and for \(G\) finite, Gan-Li proved in \cite[Thm 1.10]{GL3} that this holds for \(\FI_G\) as well. Technically there are three parts to representation stability according to the definition given in \cite{CF}, but the first two parts (``surjectivity'' and ``injectivity'') follow straightforwardly from the definition of being finitely generated. It is the third part, ``multiplicity stability'', that is really the most interesting, and which we will now describe.

Take \(G\) finite and \(k\) a \emph{splitting field} of characteristic 0 for \(G\), that is, a field over which all its irreducible representations over \(\mathbb{C}\) are defined. Let \(\Irr(G)\) denote the set of isomorphism classes of irreducible representations of \(G\). Let \(\underline \lambda\) be a partition-valued function on \(\Irr(G)\). Put \(|\underline\lambda| = (|\underline\lambda(\chi_1)|, \dots, |\underline\lambda(\chi_r)|)\), and \(\|\underline\lambda\|\) for the norm of this partition. Then if \(\|\underline\lambda\| = n\), there is an associated irreducible representation of \(W_n\):
\[ L(\underline \lambda) = \Ind^{W_n}_{W_{|\underline\lambda|}} \left(M_1^{\otimes \underline\lambda(\chi_1)} \otimes E(\underline\lambda(\chi_1))\right) \otimes \cdots \otimes \left(M_r^{\otimes \underline\lambda(\chi_r)} \otimes E(\underline\lambda(\chi_1))\right) \]
where \(W_\mu = W_{\mu(1)} \times \cdots W_{\mu(l)}\),
and that these comprise all the irreducible representations of \(W_n\) up to isomorphism. Extend \(\underline{\lambda}\) to \(n \ge m + \underline{\lambda}(\chi_0)_1\) as follows:
\[
\underline{\lambda}[n](\chi) = \begin{cases} \left(n - |\underline{\lambda}|, \underline{\lambda}(\chi_0)\right) & \text{if } \chi = \chi_0 \\ \underline{\lambda}(\chi) & \text{otherwise} \end{cases}
\]
Writing \(L(\underline{\lambda})_n\) for the irreducible representation corresponding to \(\underline{\lambda}[n]\), multiplicity stability for an \(\FI_G\)-module \(V\) says that the decomposition into irreducibles has multiplicities independent of \(n\) for large \(n\):
\[
V_n = \bigoplus_{\underline{\lambda}} L(\underline{\lambda})_n^{\oplus c(\underline{\lambda})} \,\text{ for all } n \ge N
\]
where we call \(N\) the \emph{stability degree} of \(V\). In particular, when \(G\) is trivial, we recover (uniform) multiplicity stability for FI-modules in the sense of \cite[Defn 2.6]{CF}, and when \(G = \mathbb{Z}/2\mathbb{Z}\), we recover (uniform) multiplicity stability for \(\FI_{\text{BC}}\)-modules in the sense of \cite[Defn 2.6]{Wi}.

\subsection{Projective resolutions and character polynomials}
For \(G\) finite, \cite{SS2} and \cite{GL3} actually obtain a deeper structural result that implies representation stability, and this result has other important consequences for us. To state it, define a \emph{torsion} \(\FI_G\)-module to be one with \(V_n \ne 0\) for only finitely many \(n\), and say that an \(\FI_G\)-module is projective if it is a projective object in the category \(\FI_G\Mod\). Gan-Li's result can then be stated as follows.

\begin{prop}[{\cite[Thm 1.6]{GL3}}]
For any finitely generated \(\FI_G\)-module \(V\), with \(G\) finite, there is a finite resolution of \(\FI_G\) modules
\[0 \to V \to T^1 \oplus P^1 \to T^2 \oplus P^2 \to \dots \to T^n \oplus P^k \to 0\]
with each \(T^i\) torsion and each \(P^i\) projective.
\end{prop}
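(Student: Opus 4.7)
The plan is to pass to the Serre quotient by torsion modules, find a finite projective resolution there, and lift it back to \(\FI_G\Mod\) with torsion correction terms absorbing the defect.

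\emph{First step.} Let \(\mathcal{T} \subset \FI_G\Mod\) be the full subcategory of torsion modules, which is a Serre subcategory, and let \(Q := \FI_G\Mod/\mathcal{T}\) be the Serre quotient. By the Noetherianity result of Sam--Snowden cited above, any finitely generated \(\FI_G\)-module is Noetherian, and this passes to \(Q\). I would then show that every finitely generated \(V\) admits a finite projective resolution
\[0 \to \overline{V} \to \overline{P}^0 \to \overline{P}^1 \to \cdots \to \overline{P}^k \to 0\]
in \(Q\), where each \(\overline{P}^i\) is the image in \(Q\) of a finitely generated projective \(P^i \in \FI_G\Mod\). This reduces to a cohomological-dimension statement for \(Q\), which I would prove by induction on the generation degree of \(V\) using the shift functor \(S : \FI_G\Mod \to \FI_G\Mod\): \(S\) is exact, sends \(M(m)\) to a direct sum of \(M(j)\)'s for \(j \le m\) modulo torsion, and the cone of the natural map \(V \to SV\) has strictly smaller generation degree. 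This is essentially the strategy of \cite{CEF} for \(G\) trivial and \cite{Wi} for \(G = \mathbb{Z}/2\mathbb{Z}\), adapted to finite \(G\).

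\emph{Second step.} Lift the resolution back to \(\FI_G\Mod\). The differentials in \(Q\) lift to \(\FI_G\)-module maps that are well-defined only up to torsion, so the lifted sequence \(V \to P^0 \to P^1 \to \cdots \to P^k\) is a complex only modulo torsion. Its cohomology groups \(H^i\) are finitely generated by Noetherianity and torsion since they vanish in \(Q\), so each \(H^i \in \mathcal{T}\). A standard mapping-cone construction then absorbs these torsion groups: replace each \(P^i\) by \(T^i \oplus P^i\) for appropriately chosen finitely generated torsion modules \(T^i\), and modify the differentials so that the \(T^i\) exactly cancel the cohomological defect from the previous degree while exposing the next. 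The result is an exact sequence
\[0 \to V \to T^1 \oplus P^1 \to T^2 \oplus P^2 \to \cdots \to T^k \oplus P^k \to 0\]
in \(\FI_G\Mod\), of the desired form, with finite generation of the \(T^i\) automatic from Noetherianity.

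\emph{Main obstacle.} The technical heart is the cohomological-dimension bound for \(Q\) in the first step. A priori a finitely generated \(\FI_G\)-module can have arbitrarily complicated relations, and one must argue that modulo torsion this complexity is controlled by a finite invariant (the generation degree). The cleanest route is via the shift functor and an induction on generation degree, using that the shift of a free \(\FI_G\)-module decomposes explicitly into smaller free modules plus torsion; this is what I expect \cite{GL3} executes. Once this input is in place, the lifting/correction step is formal homological algebra.
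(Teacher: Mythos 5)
The paper does not reprove this proposition; it is cited from Gan--Li, and the argument lives entirely in \cite{GL3}, so there is no in-paper proof to compare your sketch against. With that caveat, your outline is recognizably in the same family of arguments used throughout the FI and \(\FI_G\) literature (Nagpal, Church--Ellenberg, Gan--Li): induction on generation degree via the shift functor, with torsion modules as the error terms.

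The place you gloss over too quickly is the second step. Morphisms in the Serre quotient \(Q = \FI_G\Mod/\mathcal{T}\) are roof diagrams, so ``lifting'' an exact complex from \(Q\) back to \(\FI_G\Mod\) is not just choosing preimages of the differentials and then adding torsion correction terms by a mapping cone. Composing lifts of consecutive differentials generically requires replacing the intermediate objects by subobjects with torsion cokernel, and ensuring that the corrected complex remains finite and lands in the prescribed shape \(T^i \oplus P^i\) with \(P^i\) still projective is exactly where the content of the theorem sits, not a formal epilogue. The cleaner route---and what I believe \cite{GL3} actually executes---never leaves \(\FI_G\Mod\): one works with the shift functor \(S\) directly on \(V\). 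The natural map \(V \to SV\) has torsion kernel, \(S\) preserves projectivity (indeed sends free modules to direct sums of free modules), and the cokernel of \(V \to SV\) has strictly smaller generation degree; splicing and inducting on that degree produces the desired exact sequence in one pass, with the torsion summand \(T^i\) and the projective summand \(P^i\) emerging simultaneously at each stage rather than being reconciled after the fact. In other words, the cohomological-dimension bound in \(Q\) that you flag as the ``main obstacle'' and the lifting step that you call ``formal'' are really two faces of a single induction, and keeping them fused---by not passing to \(Q\) at all---is what makes the argument close cleanly.
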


In particular, this resolution's existence means that for \(n \gg 0\) there is a resolution of \(W_n\)-representations
\begin{equation} \label{fin_res}
0 \to V_n \to P^1_n \to \dots \to P^k_n \to 0
\end{equation}

This are powerful because, as in \cite{CEF}, we have strong control over the structure of projective \(\FI_G\)-modules. Namely, let \(\Res: \FI_G \to W_i\) be the restriction to a single group. This functor has a left adjoint \(\Ind^{\FI_G}: W_i \to \FI_G\) given by
\[\Ind^{\FI_G}(V)_{i+j} = \Ind_{W_i \times W_j}^{W_{i+j}} V \boxtimes k\]
Then \(\Ind^{\FI_G}(V)\) is a projective \(\FI_G\)-module whenever \(V\) is a projective \(W_i\)-module, and tom Dieck \cite[Prop 11.18]{tomD} proved that any projective \(\FI_G\)-module is of this form (in fact, for a large class of category representations). Following Ramos \cite{Ra1}, define a \emph{relatively projective} module to be a direct sum of any such induced modules, even if the \(W_i\) are not projective \(W_n\)-representations. Finitely-generated (relatively) projective \(\FI_G\)-modules thus have a compact description, as direct sums of induced modules from a finite list of \(W_n\)-representations, even for infinite \(G\).

In fact, Ramos \cite{Ra1} has found an effective bound for when \(n\) occurs in (\ref{fin_res}). To describe it, we need to refine our notion of finite generation. As we said, finitely generated modules are those that admit a surjection \(\bigoplus_{i=1}^N M(d_i) \twoheadrightarrow V\). Say that such a \(V\) is \emph{generated in degree} \(\le d = \max_i \{d_i\}\). Next, suppose there is an exact sequence
\[ 0 \to K \to M \to V \to 0\]
with \(M\) relatively projective and \(K\) generated in degree \(\le r\). Then say that \(V\) is has \emph{related in degree} \(\le r\).

Ramos \cite[Thm C]{Ra1} then says that if \(V\) is a finitely-generated \(\FI_G\)-module generated in degree \(d\) and related in degree \(r\), then the above resolution (\ref{fin_res}) holds whenever \(n \ge r + \min\{r, d\}\). Notice that if \(V\) is already relatively projective, then \(r = 0\) and this is sharp.

In particular, when \(G\) is finite, the resolution (\ref{fin_res}) implies representation stability: as Gan-Li verify in \cite[Thm 1.10]{GL3}, the individual projective modules \(\Ind^{\FI_G}(V_m)\) satisfy representation stability, with stability degree \(\le 2m\). By semisimplicity of each \(k[G_n]\), \(V\) therefore satisfies representation stability as well. Furthermore, this resolution provides a quick proof of the existence of character polynomials for \(\FI_G\), as follows.

Recall that for a \emph{character polynomial} for \(S_n\) is an element of \(\mathbb{Q}[X_1, X_2, \dots]\), which we think of as a class function on \(S_n\), where \(X_i\) counts the number of \(i\)-cycles of a permutation. \cite[Thm 3.3.4]{CEF} prove that any finitely generated \(\FI\)-module is eventually given by a single character polynomial. We generalize this and Wilson's \cite[Thm 5.15]{Wi} result for \(G = \mathbb{Z}/2\) as follows.
\begin{thm}[\bfseries Character polynomials for \(\FI_G\)]
\thlabel{charpoly}
Let \(G\) be a finite group and \(k\) a splitting field for \(G\) of characteristic 0. If \(V\) is a finitely generated \(\FI_G\)-module over \(k\), generated in degree \(m\) and related in degree \(r\), then there is a polynomial
\[P_V \in k\left[\{X_i^C \mid i \ge 1, C \text{ is a conjugacy class of } G\}\right]\]
of degree \(\le m\), where \(X_i^C\) is the class function counting the number of \(C\)-labeled \(i\)-cycles in the conjugacy class of \(g\), so that for all \(n \ge r + \min(m,r)\)
\[\chi_{V_n}(g) = P_V(g).\]
\end{thm}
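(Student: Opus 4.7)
The plan is to follow the strategy of \cite{CEF} and \cite{Wi}: use the Gan-Li--Ramos resolution (\ref{fin_res}) to reduce to the case of a relatively projective module, for which the character polynomial can be computed directly via the induced-character formula.

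\textbf{Base case (relatively projective).} Let $P = \Ind^{\FI_G}(W)$ with $W$ a $W_d$-representation, $d \le m$. Then $P_n = \Ind_{W_d \times W_{n-d}}^{W_n}(W \boxtimes k)$. Since $W_d \times W_{n-d}$ is the stabilizer of the partition $([d], [n] \setminus [d])$, the coset space $W_n/(W_d \times W_{n-d})$ is in bijection with $d$-element subsets of $[n]$, and Frobenius' induced-character formula yields
\[ \chi_{P_n}(g) = \sum_{\substack{S \subset [n],\ |S| = d \\ g(S) = S}} \chi_W(g|_S). \]
Now a $g$-invariant $d$-subset is exactly a disjoint union of complete $g$-cycles of total length $d$, and $g|_S \in W_d$ has cycle-type (decorated by the $G$-conjugacy-class of each cycle-product) equal to the multiset of types of the chosen cycles. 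Grouping by this cycle-type $(c_i^C)$ with $\sum_{i,C} i \cdot c_i^C = d$, the number of $g$-invariant $S$ of that type equals $\prod_{i,C} \binom{X_i^C(g)}{c_i^C}$, so
\[ \chi_{P_n}(g) = \sum_{(c_i^C):\ \sum i c_i^C = d} \chi_W\bigl((c_i^C)\bigr) \prod_{i,C} \binom{X_i^C}{c_i^C}, \]
a character polynomial in the $X_i^C$ of degree $\sum_{i,C} c_i^C \le d$, independent of $n$.

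\textbf{General case.} For $V$ generated in degree $\le m$ and related in degree $\le r$, Ramos' resolution gives an exact sequence of $W_n$-representations
\[ 0 \to V_n \to P^1_n \to \dots \to P^k_n \to 0 \]
for every $n \ge r + \min(m,r)$. Since characters are additive on finite exact sequences,
\[ \chi_{V_n}(g) = \sum_{i=1}^k (-1)^{i+1} \chi_{P^i_n}(g), \]
and applying the base case to each $P^i$ realizes $\chi_{V_n}$ as a single character polynomial in the $X_i^C$ for every $n$ in the claimed range.

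\textbf{Main obstacle.} The delicate point is the sharp degree bound $\le m$, since the $P^i$ appearing in Ramos' resolution need not themselves be generated in degree $\le m$; a priori the alternating sum above is only a polynomial of degree $\le \max_i \{\deg P^i\}$. The fix is to transport the notion of \emph{weight} from \cite[\S 3.2]{CEF} to the $\FI_G$ setting: declare $w(\Ind^{\FI_G}(W)) = d$ for $W$ a $W_d$-representation, verify sub-additivity of weight under short exact sequences, and observe from the base-case computation that the degree of the character polynomial of a relatively projective equals its weight. Since $V$ is a quotient of some $\bigoplus_i \Ind^{\FI_G}(W_i)$ with each $W_i$ a $W_{d_i}$-representation with $d_i \le m$, sub-additivity forces $w(V) \le m$; an induction on the length of the resolution then shows that all weight-$> m$ contributions cancel in the alternating sum, yielding $\deg P_V \le m$.
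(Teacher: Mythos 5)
Your proposal follows essentially the same strategy as the paper: compute the character of a relatively projective $\Ind^{\FI_G}(W)$ via the Frobenius induced-character formula (obtaining a character polynomial of degree equal to the degree of $W$), then use the Gan--Li--Ramos resolution (\ref{fin_res}) together with additivity of characters on exact sequences to pass to a general finitely generated $V$. Your ``main obstacle'' paragraph raises a genuine point that the paper itself treats very tersely --- the paper simply says the general result ``follows from (\ref{fin_res}) and Ramos [Thm C] by semisimplicity'' without explicitly arguing that the degree of the resulting character polynomial is bounded by $m$ and not by the maximum degree of the projectives appearing in the resolution; your suggested fix of transporting the notion of weight from \cite[\S 3.2]{CEF} and using sub-additivity is the right way to make that bound precise, though you leave the verification of sub-additivity and the cancellation argument as a sketch. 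So: same route, with a more careful (if not fully written out) treatment of the degree bound.
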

\begin{proof}
If \(V\) is a representation of \(W_m\), we can explicitly compute the character of the projective \(\FI_G\)-module \(\Ind^{\FI_G}(V)\). This calculation is done in \cite[Lem 5.14]{Wi} for \(G = \mathbb{Z}/2\), and her proof applies essentially verbatim, but enough small details are different that it is easier to just give the adapted proof than to describe the necessary changes.

The character of the induced representation \(\Ind^{\FI_G}(V)_n = \Ind_{W_m \times W_{n-m}}^{W_n} V \boxtimes k\) is
\begin{align*}
\chi_{\Ind^{\FI_G}(V)_n}(w) &= \sum_{\substack{\{\text{cosets } C\, \mid\, w \cdot C = C\} \\ \text{any } s \in C}} \chi_{V \boxtimes k}(s^{-1} w s) \\
&= \sum_{\substack{\{\text{cosets } C\, \mid\, w \cdot C = C\} \\ \text{any } s \in C}} \chi_V(p_m(s^{-1} w s))
\end{align*}
where \(p_m: W_m \times W_{n-m} \to W_m\) is the projection, and where the sum is over all cosets in \(W_n / (W_m \times W_{n-m})\) that are stabilized by \(w\), equivalently, those cosets \(C\) such that \(s^{-1}ws \in W_m \times W_{n-m}\) for any \(s \in C\).

An element \(w \in W_n\) can be conjugated in \(W_m \times W_{n-m}\) precisely when its \(c(G)\)-labeled cycles can be split into a set of cycles of total length \(m\), and a set of cycles of total length \((n-m)\). If we fixed a labeled partition \(\underline \lambda\) of \(m\), then the cycles of \(w\) can be factored into an element \(w_m\) of labeled cycle type \(\underline \lambda\) and its complement \(w_{n-m}\) in the following number of ways (possibly 0):
\[
\prod_{C \in c(G)} \binom{X^C_1}{n_1(\underline{\lambda}(C))} \binom{X^C_2}{n_2(\underline{\lambda}(C))} \cdots \binom{X^C_m}{n_m(\underline{\lambda}(C))}
\]
where \(n_r(\mu)\) is the number of \(r\)'s in \(\mu\). Each such factorization of \(w\) corresponds to a coset \(C \in W_n / (W_m \times W_{n-m})\) that is stabilized by \(w\). For any representative \(s \in C\), \(p_m(s^{-1} w s)\) has labeled cycle type \(\underline \lambda\). So we conclude that
\[
\chi_{\Ind^{\FI_G}(V)_n} = \sum_{|\underline{\lambda}| = m} \chi_V(\underline{\lambda}) \prod_C \binom{X^C_1}{n_1(\underline{\lambda}(C))} \binom{X^C_2}{n_2(\underline{\lambda}(C))} \cdots \binom{X^C_m}{n_m(\underline{\lambda}(C))}
\]
where the left-hand-side is manifestly a fixed character polynomial independent of \(n\). Notice that this character polynomial has degree \(m\). The general result therefore again follows from (\ref{fin_res}) and Ramos \cite[Thm C]{Ra1} by semisimplicity.
\end{proof}

In particular, by taking \(g = e\) in \thref{charpoly}, we see that \(\dim V_n\) is given by a single polynomial for \(n \ge r + \min(m,r)\), which is \cite[Thm D]{Ra1}.

\subsection{Virtually polycyclic \(G\) and \(K_0\)-stability}
The above results only apply when \(G\) is finite, so that the group ring \(k[G]\) is semisimple. We would like to have a good analogue of representation stability for infinite virtually polycyclic \(G\). The answer is to pass to the Grothendieck group, where we essentially impose semisimplicity by fiat.

Recall that, for a ring \(R\), the Grothendieck group \(G_0(R)\) is defined to be the free abelian group generated by isomorphism classes \([M]\) of finitely-generated \(R\)-modules \(M\), modulo the relation that for any short exact sequence
\[ 0 \to M \to N \to M' \to 0,\;\text{ we have }\;[N] = [M] + [M']\]
The Grothendieck group \(K_0(R)\) is defined similarly, except that we only take finitely-generated \emph{projective} \(R\)-modules. The map \(K_0(R) \to G_0(R)\), \([P] \to [P]\) is well-defined and is called the \emph{Cartan map}.

Here we will be taking \(R = k[W_n]\), the group ring of \(W_n = G \wr S_n\) with \(G\) a virtually polycyclic group. Since \(W_n\) is itself virtually polcyclic, it is then known that the Cartan map is an isomorphism, so the two notions of Grothendieck group coincide. We will use the notation \(K_0(W_n) := K_0(k[W_n]) \cong G_0(k[W_n])\), as much to avoid clashing with the group \(G\), but we abuse notation by writing \([M] \in K_0(W_n)\) for any finitely-generated \(W_n\)-module \(M\), which makes sense since the Cartan map is an isomorphism.

Grothendieck groups of virtually polycyclic groups have been much studied and are well understood; for a good overview, see \cite[Ch. 8]{Pa}. For one thing, such groups are always torsion-free and finitely generated, and thus isomorphic to \(\mathbb{Z}^r\). One powerful result is \emph{Moody's induction theorem} \cite{Mo}, which says that for \(G\) virtually polycyclic, \(K_0(k[G])\) is generated by induced modules of the finite subgroups of \(G\). This means that, at the level of \(K_0\), the representation theory of \(G \wr S_n\) for \(G\) virtually polycyclic works in the same way as for \(G\) finite that was described in \S2.1, as follows. 

The monoid \(K^+_0(G)\) of \emph{positive} classes---that is, those actually represented by honest, and not just virtual, representations---is isomorphic to \(\mathbb{N}^r\), and thus has a unique basis \(X = \{\chi_1, \dots, \chi_r\}\). Pick representative \(G\)-modules \(M_1, \dots, M_r\), so that \([M_i] = \chi_i\). If \(\underline \lambda\) is a partition-valued function on \(X\), let \(L(\underline \lambda)\) be the associated representation of \(W_n\) defined in \S2.1, using the \(M_i\) as the representations associated to \(\chi_i\). Of course, this is only defined up to a choice of \(\{M_i\}\), but it does give a well-defined element \([L(\underline\lambda)] \in K_0(G)\). Moody's induction theorem then has the following consequence:

\begin{prop}
The collection \(\{[L(\underline \lambda)] : \|\underline\lambda\| = n\}\) forms a basis for \(K_0(W_n)\).
\end{prop}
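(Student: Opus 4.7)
The plan is to lift the classical Specht--Macdonald construction of the irreducibles of \(G \wr S_n\) (for \(G\) finite) to the level of \(K_0\), with Moody's induction theorem playing the role that semisimplicity plays in the finite case. I fix chosen representatives \(M_i\) for the basis elements \(\chi_i \in K_0(k[G])\) provided by Moody.

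The first key step is a K\"unneth-type isomorphism
\[K_0(k[G^n]) \cong K_0(k[G])^{\otimes n},\]
sending \([N_1 \boxtimes \cdots \boxtimes N_n]\) to \([N_1] \otimes \cdots \otimes [N_n]\). For virtually polycyclic \(G\) this follows from general properties of \(K_0\) of such group rings: Moody's theorem identifies both sides as generated by inductions from finite subgroups, and (up to conjugacy) every finite subgroup of \(G^n\) lies in a product of finite subgroups of \(G\), where the isomorphism reduces to the classical Mackey/K\"unneth identification. The resulting basis \(\{\chi_{i_1} \boxtimes \cdots \boxtimes \chi_{i_n}\}\) is permuted by the natural \(S_n\)-action, with orbits parametrized by multisets \(\nu\) of size \(n\) in \(X\); the stabilizer of the representative for \(\nu\) is the Young subgroup \(S_\nu = \prod_i S_{\nu(\chi_i)}\), and its preimage in \(W_n\) is \(W_\nu = G^n \rtimes S_\nu\).

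Next, I would carry out the Clifford-theoretic decomposition at the \(K_0\) level. A Mackey-style argument, mirroring the finite case, gives a direct-sum decomposition of \(K_0(k[W_n])\) indexed by the multisets \(\nu\), where the \(\nu\)-summand is obtained by induction \(W_\nu \to W_n\) from the isotypic subgroup of \(K_0(k[W_\nu])\) sitting over the \(G^n\)-class \(\chi_\nu = M_1^{\otimes \nu(\chi_1)} \boxtimes \cdots \boxtimes M_r^{\otimes \nu(\chi_r)}\). Tensoring with \(\chi_\nu\) identifies each such isotypic subgroup with
\[K_0(k[S_\nu]) \;\cong\; \bigotimes_i K_0(k[S_{\nu(\chi_i)}]),\]
whose standard basis consists of tuples of Specht modules \(\bigotimes_i [E(\lambda^{(i)})]\) with \(|\lambda^{(i)}| = \nu(\chi_i)\). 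Assembling these bases across all \(\nu\) recovers exactly the index set \(\{\underline{\lambda} : \|\underline{\lambda}\| = n\}\), and by construction the resulting classes are precisely the \([L(\underline{\lambda})]\).

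The main obstacle is establishing the K\"unneth isomorphism and the Clifford-theoretic decomposition rigorously for virtually polycyclic \(G\). Both statements are classical for finite groups and essentially automatic from semisimplicity, but for infinite \(G\) they require a careful reduction to finite subgroups via Moody---for instance by induction on a polycyclic composition series, handling the cyclic-extension case explicitly. Once this bookkeeping is in place, the rest of the argument is formally parallel to the classical wreath product construction.
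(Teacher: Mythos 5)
Your proposal takes a genuinely different route from the paper. The paper's proof is essentially two citations: it asserts that spanning is ``the actual consequence of Moody's induction theorem'' applied to \(W_n\), and it refers to a textbook exercise (Passman, Exer.\ 35.3) for linear independence. You instead attempt a bottom-up reconstruction---a K\"unneth isomorphism \(K_0(k[G^n]) \cong K_0(k[G])^{\otimes n}\), followed by a Clifford/Mackey-style decomposition of \(K_0(k[W_n])\) over multisets, followed by identification of each isotypic piece with \(\bigotimes_i K_0(k[S_{\nu(\chi_i)}])\). This is the shape the ``real'' proof must have, and it is closer to an actual argument than what the paper prints; but as written it is not complete, and you acknowledge as much.

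The two steps you flag as obstacles are genuine gaps, and the first one is more delicate than your sketch suggests. The map \(G_0(R) \otimes G_0(S) \to G_0(R \otimes_k S)\) is not an isomorphism for group algebras in general: it already fails for finite groups unless \(k\) is a splitting field (e.g.\ \(G_0(\mathbb{Q}[C_3])^{\otimes 2}\) has rank 4 but \(G_0(\mathbb{Q}[C_3 \times C_3])\) has rank 5). So ``reduce to finite subgroups via Moody and invoke the classical K\"unneth identification'' is not yet a reduction---the classical identification requires a splitting hypothesis that the paper never makes explicit for infinite \(G\), and one has to track carefully which finite subgroups of \(G^n\) and \(W_n\) are involved and what ``splitting field'' should mean for them. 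The observation that every finite \(H \le G^n\) sits inside \(\prod_i p_i(H)\) is correct and is the right starting point, but it gives surjectivity of the K\"unneth map, not injectivity, and injectivity is exactly where the paper punts to Passman. Similarly, the Clifford-theoretic decomposition of \(K_0(k[W_n])\) over the infinite normal subgroup \(G^n\) cannot simply be ``mirrored from the finite case''; classical Clifford theory uses complete reducibility of \(k[G^n]\)-modules, which is unavailable here, and one would need to argue at the level of the Grothendieck group (e.g.\ via a filtration by \(G^n\)-socle type or by a devissage argument) with some care. In short: the plan is the right plan, and it is more informative than the paper's two-line proof, but neither of the two named obstacles is merely ``bookkeeping''---each is a substantive lemma whose proof for infinite virtually polycyclic \(G\) would need to be written out, and the K\"unneth step in particular cannot be taken for granted.
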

\begin{proof}
The fact that the \([L(\underline\lambda)\)]'s span \(K_0(W_n)\) is the actual consequence of Moody's induction theorem. For linear independence---that is, injectivity of the map \(\mathbb{Z}\langle [L(\underline \lambda)] \rangle_{\underline \lambda} \to K_0(W_n)\)---see e.g. \cite[Exer 35.3]{Pa}.
\end{proof}

The reason \(K_0\) is relevant to us is because of the following generalization of the resolution (\ref{fin_res}) to virtually polycyclic \(G\), due to Nagpal-Snowden:
\begin{thm}[Nagpal-Snowden \cite{NS}]
If \(G\) is virtually polycyclic, and \(V\) is a finitely-generated \(\FI_G\)-module, then for \(n\) sufficiently large, there is a filtration
\[0 = V^{(0)}_n \subset V^{(1)}_n \subset \cdots \subset V^{(m)}_n = V_n\]
such that the successive quotients \(V_n^{(k+1)}/V_n^{(k)}\) are of the form \(\bigoplus_i \Ind^{\FI_G}(W_i)_n\).
\end{thm}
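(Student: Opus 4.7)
The plan is to adapt Nagpal's shift-theorem argument for FI-modules to the \(\FI_G\) setting, using Sam-Snowden Noetherianity as the key structural input and modeling the proof on the Gan-Li finite case but without appealing to semisimplicity.

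First, I would reduce to the torsion-free case. Noetherianity of \(\FI_G\Mod\) over the Noetherian ring \(k\) ensures that the maximal torsion subfunctor \(T(V) \subseteq V\) is finitely generated, hence supported in degrees bounded by some \(N_0\). Thus \(V_n = (V/T(V))_n\) for all \(n > N_0\), so we may replace \(V\) by \(V/T(V)\) and assume \(V\) is torsion-free.

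Next, I would build the filtration by induction on the top generation degree \(d\) of \(V\). The natural candidate for the highest stratum is the image of the adjunction map \(\Ind^{\FI_G}(V_d) \to V\) extending the identity on \(V_d\); call this image \(U \subseteq V\). The target is relatively projective, but \(U\) itself is only a quotient. The crucial step is to show that for \(n \gg 0\), this surjection becomes an isomorphism in degree \(n\), so that \(U_n = \Ind^{\FI_G}(V_d)_n\) is genuinely semi-induced. This uses Noetherianity once more: the kernel of the adjunction map is finitely generated and supported above degree \(d\), and a secondary induction (or the effective range of Ramos's theorem from \S2.2) eliminates it for \(n\) large. The quotient \(V/U\) then has strictly smaller top generation degree, so by induction admits a filtration of the required form on each degree \(n \gg 0\); pulling back to \(V\) yields the desired filtration with \(V^{(m)}_n / V^{(m-1)}_n = U_n\).

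The main obstacle is the failure of semisimplicity. For \(G\) finite with \(k\) a splitting field, Gan-Li can split off projective summands directly, yielding a genuine direct-sum decomposition of \(V_n\). For virtually polycyclic \(G\), extensions between the induced modules \(\Ind^{\FI_G}(W_i)\) need not split, so only a filtration with semi-induced quotients can be expected---this is exactly why the accompanying \(K_0\)-stability result must be stated in the Grothendieck group rather than as an honest isomorphism of representations. Technically, the most delicate point is uniformly bounding the \(n\) at which the induction terminates; Nagpal-Snowden accomplish this by showing that the derived functors of the torsion subfunctor eventually vanish on any finitely generated \(\FI_G\)-module, an argument that leans squarely on Noetherianity of \(k[G]\) for virtually polycyclic \(G\).
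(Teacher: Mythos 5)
The paper does not prove this theorem; it is quoted from Nagpal--Snowden and used as a black box, so there is no internal proof to compare against. Your sketch does gesture at the correct circle of ideas (Noetherianity of \(k[G]\), the shift-theorem philosophy, reduction to the torsion-free case), but one of its central steps is wrong as stated.

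The problem is the claim that the adjunction map \(\Ind^{\FI_G}(V_d) \to V\) becomes an isomorphism onto its image \(U\) in degrees \(n \gg 0\), because ``the kernel is finitely generated and supported above degree \(d\)'' and so can be eliminated. The kernel \emph{is} finitely generated and generated in degrees \(> d\), but that does not make it torsion: it can persist (and grow) in every degree. Take \(G\) trivial and let \(V\) be the FI-module with \(V_0 = 0\), \(V_n = k\) the trivial \(S_n\)-representation for \(n \ge 1\), and all transition maps the identity. Here \(d = 1\), \(V_1 = k\), and \(\Ind^{\FI}(V_1)_n \cong k[S_n/S_{n-1}]\) is the \(n\)-dimensional permutation representation; the adjunction map to \(V_n = k\) is the augmentation, whose kernel has dimension \(n-1\) and never dies. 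The filtration the theorem asserts does exist for this \(V\) --- it is the trivial one with single layer \(\Ind^{\FI}(k_{S_0})_n\) --- but it is not found by your recipe. In Nagpal's actual argument one works with the shift functors \(\Sigma^a\) and the derived functors of the degree-zero truncation, showing these derived functors vanish after enough shifts; the layers of the resulting filtration are induced from representations in varying degrees, not only from \(V_d\). Your reduction to the torsion-free case and the remark about why one only gets a filtration rather than a direct-sum decomposition (no semisimplicity of \(k[G]\) for infinite \(G\)) are both correct and correctly motivate the \(K_0\)-stability formulation, but the induction step as written would fail.
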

We are now ready to formulate the relevant generalization of representation stability. For \(G\) a virtually polycyclic group, say that an \(\FI_G\)-module \(V\) satisfies \emph{\(K_0\)-stability} if for all sufficiently large \(n\), there is a decomposition
\[ [V_n] = \sum_{\underline\lambda} c(\underline\lambda) [L(\underline\lambda[n])]\]
where \(c(\underline\lambda)\) does not depend on \(n\). We then obtain the following.

\begin{thm}[\bfseries \(K_0\)-stability]
If \(G\) is virtually polycyclic and \(V\) is a finitely-generated \(\FI_G\)-module, then \(V\) satisfies \(K_0\)-stability.
\end{thm}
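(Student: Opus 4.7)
The plan is to combine the Nagpal--Snowden filtration with a Pieri-type computation in the Grothendieck group.

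By the Nagpal--Snowden theorem just cited, for \(n\) sufficiently large \(V_n\) admits a finite filtration whose successive quotients are direct sums of induced modules \(\Ind^{\FI_G}(W_j)_n\), where the \(\{W_j\}\) are a fixed finite collection of \(W_{m_j}\)-modules independent of \(n\). Additivity of \(K_0\) across short exact sequences gives \([V_n] = \sum_j [\Ind^{\FI_G}(W_j)_n]\), reducing the problem to proving \(K_0\)-stability for a single \([\Ind^{\FI_G}(W)_n]\) with \(W\) a fixed \(W_m\)-module. Next, \(W_m \times W_{n-m}\) has finite index in \(W_n\), so ordinary induction is exact and descends to a homomorphism of Grothendieck groups \(K_0(W_m) \to K_0(W_n)\). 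Applying Moody's induction theorem to expand \([W] = \sum_{\underline\lambda} c_{\underline\lambda}[L(\underline\lambda)]\) in \(K_0(W_m)\) with coefficients independent of \(n\), the problem reduces further to establishing \(K_0\)-stability for each \([\Ind^{\FI_G}(L(\underline\lambda))_n]\) individually.

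The final step is a Pieri-type calculation in \(K_0(W_n)\). The product \(K_0(W_m) \otimes K_0(W_{n-m}) \to K_0(W_n)\) furnished by \(\Ind^{W_n}_{W_m \times W_{n-m}}(- \boxtimes -)\) obeys, in the \([L(\underline\lambda)]\)-bases, the wreath-product Pieri rule: the multiplicity of \([L(\underline\nu)]\) in \([\Ind^{W_n}_{W_m \times W_{n-m}} L(\underline\lambda) \boxtimes L(\underline\mu)]\) is \(\prod_{\chi \in X} c^{\underline\nu(\chi)}_{\underline\lambda(\chi),\underline\mu(\chi)}\), where the \(c^\nu_{\lambda,\mu}\) are classical Littlewood--Richardson coefficients coming from the \(S_n\)-component. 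Expanding \([k_{W_{n-m}}]\) in the \(K_0(W_{n-m})\)-basis via Moody's theorem and applying this Pieri rule componentwise yields the decomposition of \([\Ind^{\FI_G}(L(\underline\lambda))_n]\) in the \(\{[L(\underline\mu[n])]\}\)-basis. This mirrors the Gan--Li calculation from the finite case: the multiplicity of each \([L(\underline\mu[n])]\) stabilizes once \(n-m\) is large enough for the horizontal-strip rule on the distinguished \(\chi_0\)-coordinate to become constant.

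The main obstacle is this final Pieri step. In the finite case, \(k_{W_{n-m}}\) is literally a single irreducible \(L((n-m)_{\chi_0})\) for \(\chi_0\) the trivial character of \(G\), and the Pieri rule collapses immediately. For infinite virtually polycyclic \(G\), the trivial representation need not be a single basis class of \(K_0(W_{n-m})\); one must instead choose the distinguished class \(\chi_0 \in X\) so that the Moody expansion of \([k_{W_{n-m}}]\) interacts compatibly with the \(\underline\mu[n]\) reindexing, then verify that summing the resulting Pieri contributions yields a coefficient in each \([L(\underline\mu[n])]\) that is genuinely independent of \(n\) once \(n\) is sufficiently large.
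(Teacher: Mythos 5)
Your overall strategy matches the paper's: reduce to relatively projective pieces via Nagpal--Snowden, expand each \(W_i\) in the \([L(\underline\lambda)]\)-basis via Moody, and then compute the parabolic induction by a wreath-product Pieri rule. The paper's own treatment of the final step is much terser than yours --- it simply asserts that the Gan--Li argument for \(\Ind^{\FI_G}L(\underline\lambda)\) ``applies verbatim, since it does not use anything about the group \(G\); it just uses Pieri's formula and other facts about the representation theory of \(S_n\).'' You instead try to actually carry out the Pieri computation, and in doing so you have identified the real point of delicacy: what exactly is the class of the trivial \(W_{n-m}\)-module \([k_{W_{n-m}}]\) in \(K_0(W_{n-m})\), and which basis element plays the role of \(\chi_0\) in the reindexing \(\underline\lambda \mapsto \underline\lambda[n]\)?

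This concern is well-founded and is, if anything, underemphasized in the paper. For \(G\) finite the trivial representation of \(G\) is itself irreducible, so \(k_{W_{n-m}} = L((n-m)_{\chi_0})\) on the nose, and your Pieri step collapses exactly as in \cite{CEF}, \cite{Wi}, and \cite{GL3}. For infinite virtually polycyclic \(G\) this identification breaks: for example, for \(G = \mathbb{Z}\) the trivial \(k[\mathbb{Z}]\)-module is a torsion module over the PID \(k[t^{\pm 1}]\) and therefore has class zero in \(G_0(k[\mathbb{Z}]) \cong K_0(k[\mathbb{Z}]) \cong \mathbb{Z}\); it is not one of the Moody generators, and the distinguished class \(\chi_0\) is actually something else (in the paper's own \(\widetilde{C}_n\) example, \(\chi_0 = [k[\mathbb{Z}]] = [\Ind^{\mathbb{Z} \rtimes \mathbb{Z}/2}_{\mathbb{Z}/2} k]\), not the trivial class). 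Your proposal acknowledges this but stops at asserting that ``one must verify'' the resulting coefficients stabilize, which is exactly the hole. To close it you would need to pin down what \(\chi_0\) is for general virtually polycyclic \(G\), show that \([k_{W_j}]\) has a uniform Moody expansion whose \(j\)-dependence is entirely carried by the \(\chi_0\)-component in the form of a growing one-row partition, and only then run the Littlewood--Richardson step. The paper avoids committing to any of this by deferring wholesale to \cite{GL3}, so relative to the paper your proposal is at worst equally incomplete at this point, and it has the merit of surfacing the issue rather than burying it under a citation.
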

\begin{proof}
By Theorem 2.4, for \(n\) sufficiently large, we can write
\[ [V_n] = \sum_i [\Ind^{\FI_G}(V_i)_n] - \sum_j [\Ind^{\FI_G}(W_j)_n] \]
for some collection of representations \(\{V_i\}\), \(\{W_j\}\), independent of \(n\). It therefore suffices to verify that the individual terms \(\Ind^{\FI_G}L(\underline\lambda)\) satisfy \(K_0\)-stability.

But now the proof given in \cite[Thm 1.10]{GL3} of representation stability of \(\Ind^{\FI_G} L(\underline\lambda)\) applies verbatim, since it does not use anything about the group \(G\); it just uses Pieri's formula and other facts about the representation theory of \(S_n\).
\end{proof}

\subsection{Arbitrary \(G\) and finite presentation degree}
Until recently, the Noetherian property was seen as the lynchpin of the theory of \(\FI\)-modules and related categories. However, coming out of the work of Church-Ellenberg \cite{CE} on homological properties of \(\FI\)-modules, a new perspective has emerged that shifts the emphasis from finite generation of modules to finite \emph{presentation degree} of modules, e.g. in the work of Ramos \cite{Ra2} and Li \cite{Li}. In one sense, this perspective is more ``constructive'', because possessing the knowledge of both the degree of generation \emph{and} the degree of relation of a module gives us quantitative control over various stability properties of the module, as we have seen. The Noetherian property then tells us that \emph{any} finitely generated module is necessarily finitely presented, which is an important fact but no longer at the absolute center of the theory. Appealing to Noetherianity is also necessarily ineffective, since we are no longer able to say what the relation degree is, and thus lose effective bounds on stability.

At the same time, this shift in perspective allows us to expand our scope to situations where there is no hope of finite generation. For example, we will see later on examples of \(\FI_G\)-modules \(V\) that are not finitely generated simply because the individual pieces \(V_n\) are not finitely-generated \(W_n\)-representations. Nevertheless, we will be able to prove that \(V\) is still generated in finite degree, in the sense that all the generators of \(V\) (an infinite number) live only in \(V_1, \dots V_m\) for some \(m\), \emph{and} that \(V\) is related in finite degree. Of course, in a sense such \(V\) are much \emph{less} constructive then anything in the finitely-generated world.

This shift also allows us to leave behind the requirement that \(G\) be virtually polycyclic. Indeed, as we have seen, this requirement is based in the fact that, for any kind of Noetherianity to get off the ground, we certainly need the ring \(k[G]\) to be Noetherian, which as far as we know is only true when \(G\) is virtually polycyclic. However, once we have accepted that modules can be infinitely generated, and only care about the \emph{degree} that that they are generated (and related) in, we can leave this need behind.

The central result that informs this perspective is the following, proved simultaneously by Ramos and Li:
\begin{thm}[{\cite[Thm B]{Ra2}, \cite[Prop 3.4]{Li}}] \thlabel{coh}
For any group \(G\), the category of \(\FI_G\)-modules presented in finite degree is abelian.
\end{thm}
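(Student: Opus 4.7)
The plan is to present the subcategory of $\FI_G$-modules presented in finite degree as a full abelian subcategory of the ambient abelian category $\FI_G\Mod$. Since the latter is already abelian (it is a functor category into $k\Mod$), it suffices to check closure under direct sums, kernels, and cokernels; the identification $\coim(f) \cong \im(f)$ is then inherited from the ambient structure. Write $t_0(V)$ and $t_1(V)$ for the minimal degrees of generation and of relations, so that being presented in finite degree just means $\max(t_0(V), t_1(V)) < \infty$. Direct sums are immediate from $t_i(V \oplus W) = \max(t_i(V), t_i(W))$. For cokernels of $f : V \to W$, the module $\coker(f)$ is a quotient of $W$, so $t_0(\coker f) \le t_0(W)$; a relatively projective cover $F_W \twoheadrightarrow W$ with kernel generated in degree $\le t_1(W)$ yields a cover of $\coker(f)$ whose relations combine those of $W$ with lifts of the generators of $V$, giving $t_1(\coker f) \le \max(t_1(W), t_0(V))$. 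This is a routine diagram chase.

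The real content is closure under kernels, for which I would use the shift functor $\Sigma$ defined by $(\Sigma V)(S) := V(S \sqcup \{*\})$. The key properties I plan to establish are: (i) $\Sigma$ is exact, since it is precomposition with an endofunctor of $\FI_G$ and kernels and cokernels in a functor category are computed objectwise; (ii) $\Sigma$ carries each free module $M(m)$ to a direct sum of relatively projective modules generated in degrees $\le m$, obtained by splitting morphisms $[m] \to [n+1]$ according to whether $*$ lies in the image; and (iii) consequently $t_0(\Sigma V) \le t_0(V)$ and $t_1(\Sigma V) \le t_1(V)$. Lifting $f$ to a chain map $\tilde f : F_V^\bullet \to F_W^\bullet$ of free presentations and writing $\phi_V : F_V^0 \twoheadrightarrow V$ yields a short exact sequence $0 \to \ker(\phi_V) \to \ker(f \circ \phi_V) \to \ker(f) \to 0$, so it suffices to control $t_0(\ker(f \circ \phi_V))$, a submodule of the relatively projective module $F_V^0$ whose image in $W$ equals $\im(f)$.

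The main obstacle is bounding $t_0$ of this kernel uniformly; submodules of projectives are not generated in finite degree without additional input, so the finiteness of $t_0(W)$ and $t_1(W)$ must enter decisively here. My strategy is to apply $\Sigma^a$ for $a$ large enough that $\Sigma^a W$ admits an explicit filtration by induced modules $\Ind^{\FI_G}(-)$ with controlled degree (the effective version of the Nagpal--Snowden-type structure available over arbitrary $G$ after passing to sufficiently high shift), on which maps out of relatively projective modules become tractable; one then computes $\ker(\Sigma^a(f \circ \phi_V))$ explicitly and bounds its degree of generation by a function of $t_0(V), t_0(W), t_1(W)$. The delicate remaining step is to transfer this bound on $t_0(\Sigma^a \ker(f \circ \phi_V)) = t_0(\ker \Sigma^a(f \circ \phi_V))$ back to $t_0(\ker(f \circ \phi_V))$ itself; since the kernel of the unit $U \to \Sigma^a U$ is supported in degrees $\le a + t_0(U)$ for any $U$ generated in finite degree, the discrepancy is concentrated in a finite window of degrees and contributes only boundedly many generators, completing the argument.
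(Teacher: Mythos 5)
Your skeleton is right: since $\FI_G\Mod$ is abelian, it suffices to check that the full subcategory of modules presented in finite degree is closed under (co)kernels and finite direct sums, and your treatments of direct sums and of cokernels are correct (the bound $t_1(\coker f)\le\max(t_1(W),t_0(V))$ follows from the short exact sequence $0\to\ker(F_W\twoheadrightarrow W)\to\ker(F_W\twoheadrightarrow\coker f)\to\im f\to 0$). Your reduction of the kernel case to bounding $t_0(\ker(f\circ\phi_V))$, with $F_V^0$ relatively projective, is also the right move.

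The kernel step, however, is where the entire content of the theorem lives, and as written it outsources that content to two claims that are not available as stated. First, the filtration of $\Sigma^aW$ by induced modules that you invoke is a Nagpal--Snowden-type theorem; the paper records this only for $G$ virtually polycyclic, where Noetherianity is in play. A shift theorem for arbitrary $G$ and modules that are merely presented in finite degree is true, but it is not a softer fact than what you are trying to prove --- in the published proofs (Ramos and Li) the analogous input is the regularity bound for $\FI_G$-homology (the derived functors of $H_0^{\FI_G}$ have degree bounded in terms of $t_0$ and $t_1$), which is established by a careful direct induction; closure under kernels is then read off the long exact sequence in $\FI_G$-homology for $0\to\ker f\to V\to\im f\to 0$ and $0\to\im f\to W\to\coker f\to 0$. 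Asserting the shift/filtration theorem as an available black box makes the argument circular in spirit. Second, the transfer-back step is justified by the wrong mechanism: the claim that the kernel of the unit $U\to\Sigma^aU$ is supported in degrees $\le a+t_0(U)$ is neither obvious nor what you need. What you actually need (and what does hold) is the elementary fact that if $\Sigma^aK$ is generated in degree $\le d$ then $K$ is generated in degree $\le d+a$: for $N>d+a$, $K_N=(\Sigma^aK)_{N-a}$ is spanned by images of $(\Sigma^aK)_m=K_{m+a}$ with $m\le d$ under the morphisms of $\FI_G$ acting through $\Sigma^a$, and these are honest $\FI_G$-morphisms into $K_N$. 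So the transfer is fine once stated correctly, but the real gap remains the unproven shift/regularity input over arbitrary $G$.
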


This is the analogue of Noetherianity for finite presentation degree, since it allows us to argue that kernels of maps between \(\FI_G\)-modules presented in finite degree are still presented in finite degree, and therefore for example to chase being presented in finite degree through a spectral sequence. Indeed, from a certain point of view \thref{coh} is the fundamental fact, and Noetherianity as we have seen it is so far is just a consequence of \thref{coh} and the fact that the individual group rings \(k[G]\) are Noetherian.

What we lose at this level of generality is any ability to refer back to stability results in terms of things like ``representation stability'' or ``stability of character polynomials'' that are not couched explicitly in terms of \(\FI_G\)-modules. All we can say is that the \(\FI_G\)-modules in question are presented in finite degree, which perhaps is less interesting to someone who only cares about the individual \(W_n\)-representations \(V_n\). At the same time, since these are infinitely-generated representations of infinite groups, it is hard to say much about the individual representations.

\subsection{\(\FI_G\sharp\)-modules}
The classification of projective modules provided above means that even when \(G\) is infinite, if an \(\FI_G\) module \(V\) is projective, then there is still a compact description of the representation theory of \(V\). We would therefore like to be able to determine when an \(\FI_G\)-module is projective, so that it has such a description. \cite{CEF} provide just such a method: they define a category \(\FI\sharp\) with an embedding \(\FI \hookrightarrow \FI\sharp\) so that \(\FI\sharp\)-Mod is precisely the category of projective \(\FI\)-modules, so a module is projective just when it extends to an \(\FI\sharp\)-module. Their construction and proof of equivalence carry over to the setting of \(\FI_G\), as Wilson \cite{Wi} proved for the case \(G = \mathbb{Z}/2\).

So define \(\FI_G\sharp\) to be the category of \emph{partial morphisms} of \(\FI_G\): the objects are still finite sets, but a map \(X \to Y\) is given by a pair \((Z, f)\), where \(Z \subset X\) and \(f: Z \to Y\) is a \(G\)-map. Composition of morphisms is defined by pullback, i.e. with the domain the largest set on which the composition is defined. Then there is a natural structure of \(\Ind^{\FI_G}(V)\) as an \(\FI_G\sharp\)-module. Furthermore, we obtain the following.
\begin{prop} \thlabel{sharp_decomp}
Any \(\FI_G\sharp\) module is isomorphic to \(\,\bigoplus_i \Ind^{\FI_G}(W_i)\) for some representations \(W_i\) of \(G_i\).
\end{prop}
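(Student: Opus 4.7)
The plan is to adapt the proof of Theorem 4.1.5 of \cite{CEF}, which Wilson extended to the case \(G = \mathbb{Z}/2\), to a general group \(G\); no step of that argument uses anything specific to \(G\) beyond what is formally encoded in the category \(\FI_G\sharp\). The idea is to construct, for each \(n\), a canonical idempotent \(\alpha_n \in k[\End_{\FI_G\sharp}([n])]\) whose image extracts the ``new at degree \(n\)'' part of \(V_n\), and then to exhibit \(V\) as a direct sum of inductions of these pieces.

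First I will introduce, for each subset \(S \subseteq [n]\), the partial identity \(\epsilon_S := (S, (e_G)_{i \in S}) \in \End_{\FI_G\sharp}([n])\), namely the inclusion of \(S\) into \([n]\) with all \(G\)-labels trivial. These satisfy \(\epsilon_S \epsilon_T = \epsilon_{S \cap T}\), so that
\[
\alpha_n := \sum_{S \subseteq [n]} (-1)^{n-|S|}\,\epsilon_S
\]
is an idempotent by inclusion-exclusion, and \(\epsilon_T \alpha_n = 0\) for every proper \(T \subsetneq [n]\). Moreover \(\alpha_n\) commutes with the action of the group \(W_n\): the defining sum is \(S_n\)-invariant, and each \(\epsilon_S\) is fixed by \(G^n\) since its \(G\)-labels are trivial. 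Setting \(U_m := \alpha_m V_m\) therefore yields a \(W_m\)-representation for each \(m\); these will be the pieces whose inductions assemble to \(V\) (so ``\(W_i\)'' in the statement is what I am calling \(U_i\)).

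Next, for each \(S \subseteq [n]\), the \(\FI_G\)-inclusion \(S \hookrightarrow [n]\) with trivial labels induces \(\iota_S : V_{|S|} \to V_n\), and the \(\FI_G\sharp\)-morphism \([n] \to S\) with domain \(S\) and trivial labels induces \(\pi_S : V_n \to V_{|S|}\). A direct check in \(\FI_G\sharp\) yields \(\pi_S \iota_S = \id\) and \(\iota_S \pi_S = \epsilon_S\), so that \(V_{|S|}\) sits inside \(V_n\) as a direct summand with projection \(\epsilon_S\). Iterating M\"obius inversion on the subset lattice, I will obtain
\[
V_n = \bigoplus_{S \subseteq [n]} \iota_S\bigl(U_{|S|}\bigr),
\]
which, once the basis of cosets \(W_n/(W_m \times W_{n-m})\) described in \S2.1 is matched against subsets of \([n]\), visibly agrees with \(\bigoplus_m \Ind^{\FI_G}(U_m)_n\) as a \(W_n\)-representation.

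The main obstacle will be promoting this pointwise identification to an isomorphism of \(\FI_G\sharp\)-modules. For a general partial morphism \((Z, f) : [n] \to [n']\) in \(\FI_G\sharp\), I must verify that it maps the summand indexed by \(S \subseteq [n]\) into the summand indexed by \(f(S \cap Z) \subseteq [n']\), and that the \(G\)-labels of \(f\) act on the \(U_{|S \cap Z|}\) factor as dictated by the \(\FI_G\sharp\)-structure on \(\Ind^{\FI_G}(U_{|S \cap Z|})\). This reduces to careful bookkeeping with the composition rule in \(\FI_G\sharp\) and the identities relating \(\iota_S\), \(\pi_S\), and \(\epsilon_S\); the \(G\)-labels propagate transparently because of the trivial normalization chosen for \(\iota_S\) and \(\pi_S\), so the verification runs parallel to the \(G = 1\) case treated in \cite{CEF}.
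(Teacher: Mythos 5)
Your proposal is correct and follows essentially the same strategy as the paper, which simply invokes the proof of \cite[Thm 4.1.5]{CEF} as adapted by Wilson, with the key observation that the partial identities (the \(I_S\) in the paper's notation, your \(\epsilon_S\)) must be taken with trivial \(G\)-labels. Your write-up makes explicit the M\"obius/inclusion--exclusion idempotent \(\alpha_n\) rather than the averaging endomorphism \(E_m=\sum_{|S|=n} I_S\) quoted in the paper, but this is the same subset-lattice argument underlying CEF's proof, and the remaining verification you defer (that the projectors \(\iota_S\alpha_{|S|}\pi_S\) are orthogonal and assemble into an \(\FI_G\sharp\)-map) goes through exactly because the labels on \(\iota_S,\pi_S,\epsilon_S\) are normalized to be trivial, so the \(G\)-bookkeeping is transparent.
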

\begin{proof}
This was proved for \(G\) trivial in \cite[Thm 4.1.5]{CEF}, and for \(G = \mathbb{Z}/2\) in \cite[Thm 4.42]{Wi}. As Wilson explains, the proof in \cite{CEF} applies almost verbatim: the only change that needs to be made is to the definition of the endomorphism \(E: V \to V\), which should be defined as follows, for \(m \ge n\),
\begin{align*}
E_m&: V_m \to V_m \\
E_m& = \sum_{\substack{S \subset [m] \\ |S| = n}} I_S, \;\; \text{where } I_S = (S, \iota) \in \Hom_{\FI_G\sharp}([m], [m]) \text{ with } \iota: S \hookrightarrow [m] \text{ the inclusion}.
\end{align*}
\end{proof}

\begin{cor}
\thlabel{sharp_range}
If \(V\) is an \(\FI_G\sharp\)-module generated in degree \(m\), then \(\chi_V\) is given by a single character polynomial of degree \(\le m\), and satisfies representation stability with stability degree \(\le 2m\).
\end{cor}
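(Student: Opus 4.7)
The plan is to combine \thref{sharp_decomp} with the character calculation already carried out inside the proof of \thref{charpoly}: since an $\FI_G\sharp$-module is, up to isomorphism, literally a direct sum of relatively projective $\FI_G$-modules, no resolution is required and the bounds are immediate.

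First I would apply \thref{sharp_decomp} to write $V \cong \bigoplus_i \Ind^{\FI_G}(W_i)$, where each $W_i$ is a $W_{d_i}$-representation. The hypothesis that $V$ is generated in degree $\le m$ forces $d_i \le m$ for every index $i$, since a summand $\Ind^{\FI_G}(W_i)$ cannot be generated by anything of degree strictly less than $d_i$.

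Next I would invoke the explicit formula derived in the proof of \thref{charpoly}, which computes
\[
\chi_{\Ind^{\FI_G}(W_i)_n} \;=\; \sum_{|\underline\lambda| = d_i} \chi_{W_i}(\underline\lambda) \prod_{C \in c(G)} \prod_r \binom{X^C_r}{n_r(\underline\lambda(C))},
\]
a character polynomial of degree exactly $d_i$. Summing the contributions over $i$ yields a single character polynomial for $\chi_{V_n}$ of total degree $\le \max_i d_i \le m$. Crucially, because we are working directly inside the $\FI_G\sharp$-category, the formula is valid for \emph{every} $n$, so there is no threshold like the $n \ge r + \min(m,r)$ appearing in \thref{charpoly}.

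Finally, for multiplicity stability, I would cite the Gan--Li computation in \cite[Thm 1.10]{GL3}, which shows that each individual relatively projective summand $\Ind^{\FI_G}(W_i)$ satisfies representation stability with stability degree $\le 2d_i \le 2m$. Since the notion of multiplicity stability is closed under direct sums (by semisimplicity of $k[W_n]$ for $G$ finite in characteristic zero), $V$ itself stabilizes by degree $2m$. There is no real obstacle here: the whole content of the corollary is that the $\FI_G\sharp$-structure lets us bypass the resolution machinery of \S2.2 and read off both conclusions straight from the Pieri-type analysis of induced modules.
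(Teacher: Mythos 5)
Your proposal is correct and follows essentially the same path as the paper: the paper's proof is a one-line citation of \thref{charpoly} and \cite[Thm 1.10]{GL3}, and that citation implicitly uses exactly the unfolding you give — \thref{sharp_decomp} to see that $V$ is relatively projective (hence $r=0$ in the notation of \S2.2), the induced-module character formula from the proof of \thref{charpoly} to get a single polynomial of degree $\le m$ valid for all $n$, and Gan--Li's verification that each $\Ind^{\FI_G}(W_i)$ stabilizes by degree $2d_i \le 2m$, with direct sums handled by semisimplicity.
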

\begin{proof}
This follows from \cite[Thm 1.10]{GL3} and \thref{charpoly}.
\end{proof}

\subsection{Tensor products and \(\FI_G\)-algebras}
Here we proceed to generalize the notions introduced in \cite[\S4.2]{CEF} from \(\FI\) to \(\FI_G\). 

Given \(\FI_G\)-modules \(V\) and \(V'\), their tensor product \(V \otimes V'\) is the \(\FI_G\)-module with \((V \otimes V')_n = V_n \otimes V'_n\), where \(\FI_G\) acts diagonally. 

A \emph{graded \(\FI_G\)-module} is a functor from \(\FI_G\) to graded modules, so that each piece is graded, and the induced maps respect the grading. If \(V\) is graded, each graded piece \(V^i\) is thus an \(\FI_G\)-module. If \(V\) and \(W\) are graded, the tensor product \(V \otimes W\) is graded in the usual way. Say that \(V\) is of \emph{finitely-generated type} if each \(V^i\) is finitely generated. Say that \(V\) is of \emph{finite type} if it is of finitely-generated type and furthermore each \(V^i_n\) is finite-dimensional. Notice if \(G\) is a finite group, these two notions coincide.

Similarly, an \(\FI_G\)-algebra is a functor from \(\FI_G\) to \(k\)-algebras, which can also be graded. Usually our algebras will be associative, but in order to deal with Lie algebras without having to invoke anything as fancy as Poincar\'e-Birkhoff-Witt, we do not assume associativity here. We can also define graded co-\(\FI_G\)-modules and algebras, as functors from \(\FI_G^{\text{op}}\). A (co-)\(\FI_G\)-algebra \(A\) is \emph{generated (as an \(\FI_G\)-algebra)} by a submodule \(V\) when each \(A_n\) is generated as an algebra by \(V_n\). 

Finally, there is another type of tensor product that we will need. Suppose \(V\) is graded \(G\)-module with \(V^0 = k\). Then the space \(V^{\otimes \bullet}\) defined by \((V^{\otimes \bullet})_n = V^{\otimes n}\) has the structure of an \(\FI_G\sharp\)-module, as in \cite[Defn 4.2.5]{CEF}, with the morphisms permuting and acting on the tensor factors.

The following theorem characterizes the above constructions.
\begin{thm} \thlabel{alg} Let \(G\) be any group.\begin{enumerate}
\item Let \(V\) and \(V'\) be \(\FI_G\)-modules generated in degree \(m\) and \(m'\). Then \(V \otimes V'\) is generated in degree \(m + m'\). If \(V\) is finitely generated and \(V'\) is finite type, then \(V \otimes V'\) is finitely generated.

\item Let \(A\) be a graded (co-)\(\FI_G\)-algebra generated by a graded submodule \(V\), where \(V^0 = 0\) and \(V\) is generated as an \(\FI_G\)-module in degree \(m\). Then the \(i\)-th grades piece \(A^i\) is generated in degree \(m \cdot i\). If \(V\) is of finite type, then \(A\) is of finite type.

\item If \(V\) is a graded \(G\)-module with \(V^0 = k\), then \(V^{\otimes \bullet}\) is an \(\FI_G\sharp\)-module whose \(i\)-th graded piece is generated in degree \(i\). If \(V\) is of finite type, then \(V^{\otimes \bullet}\) is of finite type. 

\item If \(X\) is a connected \(G\)-space, then \(H^*(X^{\bullet}; k)\) is an \(\FI_G\sharp\)-algebra whose \(i\)-th graded piece is generated in degree \(i\). If \(H^*(X; k)\) is of finite type, then \(H^*(X^{\bullet}; k)\) is of finite type.
\end{enumerate}
\end{thm}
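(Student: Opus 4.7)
The plan is to prove the four parts in order. Part 1 follows from a ``union of images'' pushforward argument; Part 2 iterates Part 1 through the multiplication map of $A$; Part 3 exploits $V^0 = k$ to collapse and re-expand tensors using the $\FI_G\sharp$-structure; and Part 4 identifies $H^*(X^\bullet;k)$ with $(H^*(X;k))^{\otimes \bullet}$ via K\"unneth and invokes Part 3.

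For Part 1, any element of $(V \otimes V')_n$ is a sum of pure tensors $u \otimes u'$, and since $V, V'$ are generated in degrees $\le m, m'$, we may write $u = f_*(v)$ and $u' = h_*(v')$ for some $v \in V_a$, $v' \in V'_{a'}$ with $a \le m$, $a' \le m'$, and $\FI_G$-morphisms $f \colon [a] \hookrightarrow [n]$, $h \colon [a'] \hookrightarrow [n]$. Setting $Z = f([a]) \cup h([a']) \subset [n]$ gives $|Z| \le m + m'$, and $f, h$ factor through the inclusion $\iota \colon Z \hookrightarrow [n]$ as $f = \iota \circ f'$, $h = \iota \circ h'$, so $u \otimes u' = \iota_*(f'_*(v) \otimes h'_*(v'))$ is the pushforward of an element of $(V \otimes V')_{|Z|}$. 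For finite generation, taking finite generating sets of $V$ and $V'$ and ranging over the finitely many data $(a, a', Z, f', h')$ produces a finite generating set of $V \otimes V'$.

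For Part 2, since $A$ is generated by $V$ as an algebra and $V^0 = 0$, multiplication yields a surjection $\bigoplus V^{d_1} \otimes \cdots \otimes V^{d_k} \twoheadrightarrow A^i$ over compositions $d_1 + \cdots + d_k = i$ with $d_j \ge 1$, so $k \le i$; each $V^{d_j}$ is a quotient of $V$ via projection to a graded piece and hence generated in degree $\le m$, so iterating Part 1 shows each summand is generated in degree $\le km \le im$. For Part 3, a homogeneous pure tensor $v_1 \otimes \cdots \otimes v_n \in (V^{\otimes n})^i$ has $\sum \deg v_j = i$, so at most $i$ factors lie in $V^{>0}$ and the rest in $V^0 = k$; if $S \subset [n]$ is the set of positive-degree positions ($|S| \le i$), the tensor is the image of its restriction to $S$ under the $\FI_G$-morphism $S \hookrightarrow [n]$, whose action on $V^{\otimes \bullet}$ inserts $1 \in V^0$ at positions outside $S$ (this insertion is exactly how $V^{\otimes \bullet}$ acquires its $\FI_G\sharp$-structure from $V^0 = k$). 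In both parts finite type is preserved by tensoring and finite direct sums.

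For Part 4, connectedness of $X$ gives $H^0(X; k) = k$, so $V := H^*(X; k)$ satisfies the hypothesis of Part 3. K\"unneth over the field $k$ yields a graded-algebra isomorphism $H^*(X^n; k) \cong V^{\otimes n}$, equivariant for the $W_n = G^n \rtimes S_n$-action (factorwise via the $G$-action on $X$, and by permutation of factors). Fixing a basepoint $x_0 \in X$, an $\FI_G\sharp$-morphism $(Z \subset [m], f \colon Z \hookrightarrow [n], (g_j)_{j \in Z}) \colon [m] \to [n]$ is realized on $X^\bullet$ by the map $X^n \to X^m$ whose $j$-th coordinate is $g_j \cdot x_{f(j)}$ for $j \in Z$ and $x_0$ for $j \notin Z$; the induced cohomology pullback matches the $\FI_G\sharp$-action on $V^{\otimes \bullet}$, with evaluation at $x_0$ playing the role of the augmentation $V \twoheadrightarrow V^0 = k$ for factors outside $Z$. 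Both conclusions then follow from Part 3. The main obstacle is precisely this bookkeeping: verifying that the K\"unneth identification is compatible with the full $\FI_G\sharp$-structure rather than only the covariant $\FI_G$-part, which relies essentially on connectedness supplying a canonical augmentation.
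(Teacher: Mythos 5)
Your overall strategy for parts 2--4 matches the paper: part 2 reduces to part 1 via the multiplication (or, in the paper's treatment, the free nonassociative algebra $k\{V\}$, which handles the non-associative case covered by ``(co-)$\FI_G$-algebra''), part 3 collapses the $V^0=k$ factors and re-expands via an inclusion, and part 4 is K\"unneth plus part 3. Two minor points there: in part 3 the degree-zero factors $v_l$ are arbitrary scalars in $k$, not necessarily $1$, so the restricted tensor must carry the product $\prod_{l\notin S} v_l$ as a coefficient before pushing forward along $S\hookrightarrow[n]$ (the paper writes this out explicitly); and in part 2 a non-associative algebra has no single multiplication map, so one must sum over bracketing patterns as well as compositions. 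Both are easily repaired.

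The genuine gap is in your part 1 finite-generation argument. You claim that ``ranging over the finitely many data $(a,a',Z,f',h')$ produces a finite generating set.'' But an $\FI_G$-morphism $[a]\hookrightarrow Z$ is a pair of an injection and a tuple in $G^a$, so when $G$ is infinite there are \emph{infinitely} many choices of $f'$ and $h'$. Worse, your argument never uses the finite-type hypothesis --- only that $V'$ is finitely generated --- and the resulting stronger statement is false: for $G=\mathbb{Z}$, the free module $M(1)$ is finitely generated but $M(1)\otimes M(1)$ is not, since $M(1)_n\otimes M(1)_n$ is not a finitely generated $W_n$-module (already $k[\mathbb{Z}]\otimes k[\mathbb{Z}]$ with the diagonal $\mathbb{Z}$-action is not finitely generated). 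The finite-type hypothesis is exactly what excludes such examples, because it forces each $V'_n$ to be \emph{finite-dimensional}. The paper's proof uses this crucially: having shown generation in degree $\le m+m'$, it observes that for each $k\le m+m'$, the piece $(V\otimes V')_k = V_k\otimes V'_k$ is a finitely generated $W_k$-module, since $V_k$ is finitely generated over $W_k$ and $V'_k$ is finite-dimensional. That gives a finite generating set of $V\otimes V'$ even for infinite $G$. Since your part 2 finite-type claim feeds back into part 1's finite generation, this gap propagates.
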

\begin{proof} \ \begin{enumerate}
\item This was proved by Sam-Snowden \cite[Prop 3.1.6]{SS2} for \(G\) finite, and their proof applies verbatim even when \(G\) is infinite to show that \(V \otimes V'\) is always generated in degree \(m + m'\), though not always finitely.

For the second part, for each \(k \le m + m'\), we know \((V \otimes V')_k = V_k \otimes V'_k\) is a finitely-generated \(W_k\)-module, since the tensor product of a finitely-generated \(W_k\)-module and a finite-dimensional module is finitely generated. The result follows.

\item This was proved for \(G\) trivial in \cite[Thm 4.2.3]{CEF} by appealing to the free nonassociative algebra \(k\{V\}\) generated by a vector space, and their proof applies verbatim even when \(G\) is infinite to show that \(A^i\) is always generated in degree \(m \cdot i\), though not always finitely. The second part follows from the second part of (a).

\item This was proved for \(G\) trivial in \cite[Prop 4.2.7]{CEF}, but here we have simplified the assumptions, by having \(V\) just be a single \(G\)-module rather than a whole \(\FI_G\)-module, and so the proof is simpler. To wit, the \(i\)-th graded piece is
\[ (V^{\otimes n})^i = \bigoplus_{k_1 + \cdots + k_n = i} V^{k_1} \otimes \cdots \otimes V^{k_n} \]
At most \(i\) of the nonnegative integers \(k_1, \dots, k_n\) can be nonzero. Let \(k_{j_1}, \dots, k_{j_l}\) be the subsequence of nonzero integers, so \(l \le i\). Then for any pure tensor
\[ v = v_1 \otimes \cdots \otimes v_n \in V^{k_1} \otimes \cdots \otimes V^{k_n},\]
since \(V^0 = k\), we can take
\[ w = \left(\prod_{l \notin \{k_j\}} v_l\right) v_{k_{j_1}} \otimes \cdots v_{k_{j_l}} \]
and then the inclusion \(j: [l] \hookrightarrow [n]\) clearly induces \(j_* w = v\). So by linearity, \((V^{\otimes n})^i\) is generated in degree \(i\).

If each \(V^i\) is finite-dimensional, then each \(V^{k_1} \otimes \cdots \otimes V^{k_l}\) is finite-dimensional for \(l = 1, \dots, i\). Therefore \((V^{\otimes n})^i\) is an \(\FI_G\)-module of finite type.

\item This was proved for \(G\) trivial in \cite[Prop 6.1.2]{CEF}, and their proof applies verbatim. It essentially follows from (3) and the K\"unneth formula. As \cite{CEF} explain, technically sometimes a sign is introduced in permuting the order of tensor factors, but this does not change the proof of (3). The degree 0 part is \(k\) by connectivity.
\end{enumerate}
\end{proof}

\section{Orbit configuration spaces}
Let \(M\) be a manifold with a free and properly discontinuous action of a group \(G\), so that \(M \to M/G\) is a cover. Define the \emph{orbit configuration space} by:
\[\Conf^G_n(M) = \{(m_i) \in M^n \mid G m_i \cap G m_j = \emptyset \text{ for } i \ne j\}\]
This was first considered by Xicot\'encatl in \cite{Xi} and later investigated in e.g. \cite{Co}, \cite{FZ}, \cite{CX}, \cite{CCX}. There is a covering map \(\Conf^G_n(M) \to \Conf_n(M/G)\) with deck group \(G^n\), given by \((m_i) \mapsto (G m_i)\). Thus another way to think of \(\Conf^G_n(M)\) is as the space of configurations in \(M\) that do not degenerate upon projection to \(M/G\), or as configurations in \(M/G\) which also keep track of a lift in \(M\) of each point in the configuration.

If \(N\) is a normal subgroup of \(G\), there is an intermediate cover
\[\Conf^G_n(M) \to \Conf^{G/N}_n(M/N) \to \Conf_n(M/G) \]
where the first map has deck group \(N^n\), and the second has deck group \((G/N)^n\). Also, notice that if \(G\) is finite, there is an embedding
\[ \Conf^G_n(M) \hookrightarrow \Conf_{|G|n}(M), \;\; (m_1, \dots m_n) \mapsto (g_1 m_1, g_2 m_1, \dots, g_{|G|} m_n) \]

\subsection{\(\FI_G\)-structure and finite generation for finite groups}
For any \(G\) acting discretely and properly discontinuously on \(M\), if we write \(\left(\Conf^G(M)\right)_n = (\Conf^G_n(M))\), then \(\Conf^G(M)\) is a co-\(\FI_G\)-space: given \(a: [m] \hookrightarrow [n]\) and \((g_i) \in G^m\), there is a map
\begin{align*}
(a, g)^*: \Conf^G_n(M) &\to \Conf^G_m(M) \\
(m_i) &\mapsto \left(g_i m_{a(i)}\right)
\end{align*}
In particular, if \(G\) is trivial we recover the usual ordered configuration space, and if \(M = \mathbb{C}^*\) and \(G = \mathbb{Z}/2\mathbb{Z}\) acting as multiplication by \(-1\), we obtain the type BC hyperplane complement from \cite{Wi}. Composing with the contravariant cohomology functor, we see that \(H^*(\Conf^G(M), k)\) has the structure of an \(\FI_G\)-module over the field \(k\).

We are therefore interested in orbit configuration spaces for which \(G\) is virtually polycyclic, so that we can apply the results on \(\FI_G\)-modules from \S2. Interesting examples include:
\begin{itemize}
\item \(G = \mathbb{Z}/2\) acting antipodally on \(M = S^m\), so that \(M/G = \mathbb{R}P^m\). This was analyzed by Feichtner and Ziegler in \cite{FZ}. Their computation of the cohomology in Thm 17 shows that \(\dim H^i(\Conf^G_n(S^m); \mathbb{Q})\) is bounded by a polynomial in \(n\). We strengthen this by proving that it is in fact equal to a polynomial for \(n \gg 0\). Furthermore, the \(G^n\) action on \(H^i(\Conf^G_n(S^m))\) was analyzed in \cite{GGSX}: in particular, their Prop 6.6 is a sort of weak form of representation stability.
\item \(G = \mathbb{Z}/2\) acting by a hyperelliptic involution on a 4-punctured \(\Sigma_g\), with quotient a 4-punctured sphere
\item Any finite cover \(\Sigma_h \to \Sigma_g\) (so that \(h = |G|(g - 1) + 1\)), with \(G\) the deck group of the covering.
\item \(M = \mathbb{R}^2\), with \(G\) a lattice isomorphic to \(\mathbb{Z}^2\), and \(M/G\) a torus
\item \(M = \Conf_d(\mathbb{C})\) for some fixed \(d\), with \(G = S_d\), so that we are looking at the iterated configuration space \(\Conf^{S_d}_n(\Conf_d(\mathbb{C}))\) and its quotient \(\UConf_n(\UConf_d(\mathbb{C}))\). 
\item \(M = S^3\), with \(G\) any finite subgroup of \(\SO(4)\), and \(M/G\) a spherical 3-manifold
\item Baues \cite{Bau} proved that every torsion-free virtually polycyclic group \(G\) acts discretely, properly discontinuously, and cocompactly on \(\mathbb{R}^d\) for some \(d\), and furthermore, the quotient spaces \(\mathbb{R}^d/G\) precisely comprise the \emph{infra-solvmanifolds}. Thus for any such \(G\), consider \(\Conf^G_n(\mathbb{R}^d)\).
\end{itemize}

A straightforward transversality argument (e.g., \cite[Thm 1]{Bi}) shows that if \(\dim M \ge 3\), the map \(\Conf_n(M) \hookrightarrow M^n\) induces an isomorphism on \(\pi_1\). Therefore \(\Conf^{\pi_1(M)}_n(\widetilde{M})\) is in fact the universal cover of \(\Conf_n(M)\) in dimension \(\ge 3\), which provides further motivation as to why orbit configuration spaces are natural to study. Note that this does not make the last example trivial (i.e., contractible), since if \(\dim M > 2\), \(\Conf_n(M)\) need not be aspherical: its homotopy groups only agree with \(M^n\) up to \(\dim(M)-2\).

We first consider the case where \(G\) is finite.

\begin{thm}[\bfseries Cohomology of orbit configuration spaces] \thlabel{confg_fg} Let \(k\) be a field, let \(M\) be a connected, orientable manifold of dimension at least 2 with each \(H^i(M; k)\) finite-dimensional, and let \(G\) be a finite group acting freely on \(M\). Then the \(\FI_G\)-algebra \(H^*(\Conf^G(M); k)\) is of finite type. \end{thm}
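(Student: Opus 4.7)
The plan is to produce a finite-type $\FI_G$-submodule $V$ of $H^*(\Conf^G(M); k)$ that generates the whole $\FI_G$-algebra, and then to invoke \thref{alg}(2). The natural candidates come from the CEF strategy: consider the open inclusion $j: \Conf^G_n(M) \hookrightarrow M^n$, whose complement is the \emph{orbit diagonal} $\bigcup_{i \ne j,\, g \in G} D_{ij}^g$ with $D_{ij}^g := \{(m_k): m_i = g m_j\}$, a closed codimension-$d$ submanifold (where $d = \dim M$); for fixed $(i,j)$ these are pairwise disjoint in $g$ because $G$ acts freely. A Cohen-Taylor/Totaro-style Leray spectral sequence analysis for $j$ should show that $H^*(\Conf^G_n(M); k)$ is generated as a graded algebra by two families: (i) the pullbacks $j^*H^*(M^n; k)$ from the ambient product, and (ii) Gysin classes $\omega_{ij}^g \in H^{d-1}(\Conf^G_n(M); k)$ dual to each removed component $D_{ij}^g$.

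For family (i), the $\FI_G\sharp$-algebra $H^*(M^\bullet; k)$ is of finite type by \thref{alg}(4), and $j^*$ maps it onto (i). For family (ii), the key observation is that the entire collection $\{\omega_{ij}^g\}$ is the $\FI_G$-orbit of the single class $\omega_{12}^e \in H^{d-1}(\Conf^G_2(M); k)$: the morphism $\alpha = ((1 \mapsto i,\, 2 \mapsto j), (e, g)) \in \Hom_{\FI_G}([2], [n])$ induces the map $\Conf^G_n(M) \to \Conf^G_2(M)$ by $(m_k) \mapsto (m_i, g m_j)$, whose preimage of $D_{12}^e$ is transverse and equals $D_{ij}^g$, so by naturality of the Gysin construction $\alpha^* \omega_{12}^e = \omega_{ij}^g$. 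Hence family (ii) is an $\FI_G$-submodule generated in degree $2$. Taking $V$ to be the sum of (i) and (ii) restricted to positive cohomological degree, \thref{alg}(1), (2) finish the argument; finite-dimensionality of each $V_n^i$ is immediate from that of $H^*(M; k)$.

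The main obstacle is justifying the first step, that families (i) and (ii) actually generate the algebra. For smooth complex varieties this follows from the Xicot\'encatl-Cohen-Xicot\'encatl presentation extending Totaro's. In the topological generality of the statement one can either carefully adapt their arguments, or sidestep the algebra structure altogether: by \thref{alg}(4) each bidegree $E_2^{p,q}$ of the Leray spectral sequence is a finitely-generated $\FI_G$-module, whence Noetherianity of $\FI_G$-modules (Sam-Snowden) forces finite generation of each $H^i(\Conf^G(M); k)$, and this together with finite-dimensionality of each $H^i(\Conf^G_n(M); k)$ yields the finite-type conclusion.
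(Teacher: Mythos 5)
Your proposal lands on essentially the same approach as the paper, but both of the routes you offer need a small correction, and once you make it they collapse into the paper's single argument. The worry you raise about your first route — whether families (i) and (ii) actually generate $H^*(\Conf^G_n(M);k)$ as an algebra — is something the paper deliberately avoids having to prove. Instead, the paper works entirely at the level of the Leray spectral sequence of $\iota:\Conf^G_n(M)\hookrightarrow M^n$: the Cohen--Taylor/Totaro local analysis of the stalks of $R^q\iota_*k$ (which your Gysin-class computation is precisely reproducing — your $\omega^g_{ij}$ are the paper's $e_{i,g,j}$, and your check that $\alpha^*\omega_{12}^e=\omega_{ij}^g$ is exactly the $\FI_G$-equivariance of that single generator in degree $2$) shows that the $E_2$ page is generated as an algebra by $H^*(M^\bullet;k)$ together with the $\FI_G$-module spanned by the $e_{a,g,b}$. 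That description applies to any oriented manifold, not just complex varieties, so the ``topological generality'' concern is not an obstacle. From there, \thref{alg}(4) gives finite type of the $H^*(M^\bullet;k)$ factor, \thref{alg}(2) gives finite type of the $E_2$ page, and Noetherianity applied to the subquotient $E_\infty$ page gives finite type of $H^*(\Conf^G(M);k)$. No claim is ever needed about generators of $H^*(\Conf^G(M);k)$ itself.

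Your ``sidestep'' is on the right track but is misstated: \thref{alg}(4) concerns $H^*(M^\bullet;k)$ with constant coefficients, whereas $E_2^{p,q}=H^p(M^n;R^q\iota_*k)$ has the nonconstant sheaf $R^q\iota_*k$ as coefficients, so alg.(4) does not directly imply that each $E_2^{p,q}$ is finitely generated. You still need the Totaro-type identification of the $E_2$ page that you worked out in your first route. In short: keep your Gysin-class/transversality analysis (that is the content of the local calculation), feed it into the spectral sequence to describe $E_2$, then apply \thref{alg}(2), (4) and Noetherianity. Your two routes are not alternatives; the first supplies the $E_2$ description that the second needs.
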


Following Church-Ellenberg-Farb-Nagpal \cite{CEFN}, we could adapt \thref{confg_fg} to handle \(\mathbb{Z}\) coefficients. As \cite{CEFN} mention, the proof with \(\mathbb{Z}\) coefficients is essentially identical to the one with field coefficients: the difference is that one of the inputs, the analogue of our \thref{alg}.4, becomes harder to prove. However, their proof of this analogue, \cite[Lemm. 4.1]{CEFN}, is readily adaptable to our context. We do not make use of this except in \S3.4.

\begin{proof}
The proof is based on a modification of Totaro's \cite[Thm 1]{To}. Following Totaro, consider the Leray spectral sequence associated to the inclusion \(\iota: \Conf^G_n(M) \hookrightarrow M^n\). This spectral sequence has the form
\begin{equation}\label{tot_ss}
H^i(M^n; R^j \iota_* k) \implies H^{i+j}(\Conf^G_n(M); k)
\end{equation}
where \(R^j \iota_* k\) is the sheaf on \(M^n\) associated to the presheaf
\[U \mapsto H^j(U \cap \Conf^G_n(M); k)\]

As in Totaro's proof, the sheaf \(R^j \iota_* k\) vanishes outside the appropriate ``fat diagonal'', which in this case is the union of the subspaces \(\Delta_{a,g,b} = \{(m_i) \in M^n \mid m_a = g \cdot m_b\}\), for \(1 \le a < b \le n\) and \(g \in G\). Consider a point in the fat diagonal,
\[x = (x_1, g_{11} x_1, \dots g_{1i_1} x_1, \dots x_s, g_{s1} x_s, \dots g_{s i_s} x_s).\]
 Since \(G\) acts properly discontinuously, take a neighborhood of each \(x_j\) small enough to be disjoint from all translates of all the other neighborhoods. Then use a Riemannian metric to identify each of these with the tangent space \(T_{x_j} X\), \(T_{g_{j 1} x_j} X\), etc. \(dg_{j k}(x_j)\) is then an isomorphism from \(T_{x_j} X\) to \(T_{g_{jk} x_j} X\), and the condition that a point \(m_1\) near \(x\) and \(m_2\) near \(g x\) satisfy \(m_2 = g m_1\) becomes, upon passing to the tangent space and under the isomorphism \(dg\), simply the condition that \((v,w) \in (T_x X)^2\) satisfy \(v = w\). Thus, for a neighborhood \(U\) of \(x\) small enough so that the inverse exponential map is a diffeomorphism,
\[
(R^j \iota_* k)_x = H^j(U \cap \Conf^G_n(M); k) = H^j(\Conf_{i_1}(T_{x_1} X) \times \cdots \times \Conf_{i_s}(T_{x_s} X))
\]
Thus, the local picture looks exactly the same as in \(\Conf_n\), which it should since there is a covering map \(\Conf_n^G(M) \to \Conf_n(M/G)\). So as in Totaro, we get generators of \(\Conf_n(\mathbb{R}^d)\), where \(\dim M = d\). However, for \(\Conf_n(M)\), we got just one copy of each generator \(e_{ab}\), coming from the diagonal \(\{m_a = m_b\}\). Here, however, we get a generator \(e_{a,g,b}\) for each \(g \in G\), coming from each \(\Delta_{a,g,b}\). The permutation action of \(W_n\) on the \(\{\Delta_{a,g,b}\}\) induces an action on the \(\{e_{a,g,b}\}\), which is given by
\begin{equation} \label{action}
(\sigma, \vec h) \cdot e_{a,g,b} = e_{\sigma(a), h_a g h_b^{-1}, \sigma(b)}
\end{equation}
As in Totaro, we can explicitly write down the relations that these \(e_{a,g,b}\) satisfy:
\begin{gather} \begin{split} \label{presentation}
e_{a,g,b} &= (-1)^d e_{b,g^{-1},a} \\
e_{a,g,b}^2 &= 0 \\
e_{a,g,b} \wedge e_{b,h,c} &= (e_{a,g,b} - e_{b,h,c}) \wedge e_{a,gh,c}
\end{split} \end{gather}
To conclude, we use the argument from \cite[Thm 6.2.1]{CEF}. To wit, because the Leray spectral sequence is functorial, all of the spectral sequences of \(\Conf_n^G(M) \hookrightarrow M^n\), for each \(n\), collected together form a spectral sequence of \(\FI_G\)-modules. As we just described, the \(E_2\) page is generated by \(H^*(M^n; k)\) and
the \(\FI_G\)-module spanned by the \(e_{a,g,b}\). This latter is evidently finitely-generated, since it is just generated in degree 2 by \(e_{1,e,2}\), and the former is of finite type by \thref{alg}.4, so therefore the \(E_2\) page as a whole is of finite type. The \(E_\infty\) page is a subquotient of \(E_2\), so by Noetherianity it is of finite type, and therefore \(H^*(\Conf^G(M); k)\) is of finite type.
\end{proof}
We pause briefly to dwell on the permutation action (\ref{action}) of \(W_n\) on the module \(V_n\) spanned by \(\{e_{a,g,b}\}\), since it will come up repeatedly. Let \(k[G]\) be the representation of \(G \times G\) where the first factor of \(G\) acts by multiplication on the left, and the second by multiplication on the right by the inverse (two commuting left actions). Therefore
\[V_n = \Ind_{W_2 \times W_{n-2}}^{W_n} k[G] \otimes k = \Ind^{\FI_G}(k[G])_n \]

Recall that, as a \((k[G], k[G]^{\text{op}})\)-bimodule, the regular representation has the following decomposition into irreducibles:
\[ k[G] = \bigoplus_{\chi \in \Irr(G)} V_{\chi} \boxtimes (V_{\chi})^* \]
Thus if we turn this into a \(k[G] \otimes k[G]\)-module by having the right factor act by \(g^{-1}\), this becomes
\[k[G] = \bigoplus_{\chi \in \Irr(G)} V_{\chi} \boxtimes V_{\chi} = \bigoplus_{\chi \in \Irr(G)} L( (2)_\chi) \]
as \(k[G] \otimes k[G] = k[G \times G]\)-modules. Therefore
\begin{equation} \label{irr_decom}
V = \bigoplus_{\chi \in \Irr(G)} \Ind^{\FI_G}((2)_{\chi}).
\end{equation}
as an \(\FI_G\)-module, so it is manifestly an \(\FI_G\sharp\)-module. In particular, if \(G\) is trivial, we obtain \(\Ind^{\FI}((2)) = \Sym^2 k^n / k^n\), consistent with the computation done in \cite{CEF}.

\begin{cor}
Let \(M\) be a connected, orientable manifold of dimension at least 2 with each \(H^i(M; \mathbb{Q})\) finite-dimensional, let \(G\) be a finite group acting freely on \(M\), and let \(k\) be a splitting field for \(G\) of characteristic 0. Then for each \(i\), the characters of the \(W_n\)-representations \(H^i(\Conf^G_n(M); k)\) are given by a single character polynomial for all \(n \gg 0\).
\end{cor}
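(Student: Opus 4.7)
The plan is to deduce this directly from the preceding theorem \thref{confg_fg} together with the character polynomial theorem \thref{charpoly}. By \thref{confg_fg}, the \(\FI_G\)-algebra \(H^*(\Conf^G(M); k)\) is of finite type; in particular, for each fixed \(i\), the graded piece \(H^i(\Conf^G(M); k)\) is a finitely generated \(\FI_G\)-module (since each \(H^i(\Conf^G_n(M); k)\) is finite dimensional). Since \(G\) is finite and \(k\) is a splitting field for \(G\) of characteristic 0, the hypotheses of \thref{charpoly} are satisfied.

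Now I would apply \thref{charpoly} to the finitely generated \(\FI_G\)-module \(V^i := H^i(\Conf^G(-); k)\). This immediately yields a single character polynomial
\[P_{V^i} \in k\bigl[\{X_j^C \mid j \ge 1,\, C \in c(G)\}\bigr]\]
such that \(\chi_{V^i_n}(w) = P_{V^i}(w)\) for all \(n \gg 0\), which is precisely the desired statement.

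The only step worth spelling out is extracting the finite generation of each graded piece from the finite type property of the \(\FI_G\)-algebra; but this is built into the definition of finite type in \S2.6, so there is no real obstacle. There is no hard step here: the substance is contained in \thref{confg_fg} (whose proof occupies the preceding pages) and in \thref{charpoly}; the corollary is then a one-line combination, and the only thing to verify is that the hypotheses of \thref{charpoly}—finite \(G\), splitting field \(k\) of characteristic 0, and finite generation of each \(V^i\)—are all in place, which they are.
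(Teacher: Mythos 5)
Your proof is correct and is exactly the intended deduction: the paper states this as an immediate corollary of \thref{confg_fg} and \thref{charpoly} without writing out a proof, and your two-line combination (finite type from \thref{confg_fg} gives finite generation of each graded piece, then apply \thref{charpoly} with the finite-$G$/splitting-field hypotheses) is precisely what the paper has in mind.
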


\thref{confg_fg} has another consequence, which as far as we can tell is a new result (recall that \cite[Thm 6.2.1]{CEF} only applied to orientable manifolds).

\begin{cor}
Let \(M\) be a connected, non-orientable manifold of dimension at least \(2\) with \(H^*(M; \mathbb{Q})\) of finite type. Then the \(\FI\)-algebra \(H^*(\Conf(M); \mathbb{Q})\) is of finite type.
\end{cor}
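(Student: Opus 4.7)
The plan is to reduce the non-orientable case to \thref{confg_fg} via the orientation double cover. Let \(\pi: \widetilde M \to M\) denote the orientation double cover, so that \(\widetilde M\) is a connected orientable manifold on which \(G = \mathbb{Z}/2\mathbb{Z}\) acts freely, with \(\widetilde M / G = M\). Since \(\pi\) is a finite cover, each \(H^i(\widetilde M; \mathbb{Q})\) is again finite-dimensional, so the hypotheses of \thref{confg_fg} apply to \((\widetilde M, G)\), and I get that \(H^*(\Conf^G(\widetilde M); \mathbb{Q})\) is an \(\FI_G\)-algebra of finite type.

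Next, the induced map \(\Conf^G_n(\widetilde M) \to \Conf_n(M)\) is a regular covering with deck group \(G^n\), acting freely. Since \(G^n\) is finite and we work in characteristic \(0\), the transfer gives
\[ H^*(\Conf_n(M); \mathbb{Q}) \;\cong\; H^*(\Conf^G_n(\widetilde M); \mathbb{Q})^{G^n} \]
as graded \(\mathbb{Q}\)-algebras. I would then observe that this identification is compatible with the natural inclusion \(\FI \hookrightarrow \FI_G\) (those \(\FI_G\)-morphisms whose group labels are trivial), since such morphisms commute with the residual \(G^n\)-action on the target; consequently the \(\FI\)-algebra \(H^*(\Conf(M); \mathbb{Q})\) is obtained from the \(\FI_G\)-algebra \(H^*(\Conf^G(\widetilde M); \mathbb{Q})\) by applying the invariants functor \(F: V \mapsto V^{G^\bullet}\) levelwise.

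The technical core is to show \(F\) carries \(\FI_G\)-modules of finite type to \(\FI\)-modules of finite type. Finite-dimensionality of each \(F(V)_n\) is immediate. For finite generation, given a surjection \(\bigoplus_i M_G(m_i) \twoheadrightarrow V\), exactness of \(G^n\)-invariants in characteristic \(0\) yields a surjection \(\bigoplus_i F(M_G(m_i)) \twoheadrightarrow F(V)\), so it suffices to compute \(F(M_G(m))\). Here \(M_G(m)_n = \mathbb{Q}[\Hom_{\FI_G}([m],[n])]\) is spanned by pairs \((a,(g_i))\) with \(a\) an injection and \((g_i) \in G^m\); the residual \(G^n\)-action translates the labels \((g_i)\) by \(h_{a(i)}\), and because \(a\) is injective this action is free and transitive on the \(G^m\)-fiber over each \(a\). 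Summing over orbits identifies \(F(M_G(m))_n\) with \(\mathbb{Q}[\Hom_{\FI}([m],[n])] = M(m)_n\) in an \(S_n\)-equivariant manner, i.e., \(F(M_G(m)) \cong M(m)\). Hence \(F(V)\) is finitely generated (in degree \(\le \max_i m_i\)) and of finite type.

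The main obstacle I anticipate is bookkeeping rather than conceptual: verifying that the transfer isomorphism, the \(\FI\)-structure induced from \(\FI \hookrightarrow \FI_G\), and the cup product on cohomology are all compatible, so that the resulting identification is genuinely one of \(\FI\)-algebras. Once that is in place, the three steps above combine to give the conclusion.
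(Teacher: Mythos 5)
Your argument is correct and follows the same strategy as the paper: pass to the orientation double cover, apply \thref{confg_fg}, and use transfer to identify \(H^*(\Conf_n(M); \mathbb{Q})\) with the \(G^n\)-invariants of \(H^*(\Conf^G_n(\widetilde M); \mathbb{Q})\) (the paper writes \(S_n\)-invariants there, but that is a typo; your \(G^n\) is the right group). The paper leaves the final ``therefore'' unjustified, whereas you usefully supply the missing lemma: exactness of \(G^n\)-invariants in characteristic \(0\) together with the computation \(F(M_G(m)) \cong M(m)\) (via freeness/transitivity of the residual \(G^n\)-action on the \(G^m\)-labels over each injection \(a\)) shows the invariants functor sends finite-type \(\FI_G\)-modules to finite-type \(\FI\)-modules.
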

\begin{proof}
Consider the orientation cover \(\widetilde{M} \to M\), which has deck group \(G = \mathbb{Z}/2\). Since there is a covering map \(\Conf^G_n(\widetilde{M}) \to \Conf_n(M)\), with deck group \(G^n\), then by transfer there is an isomorphism
\[H^*(\Conf_n(M); \mathbb{Q}) \cong \left(H^*(\Conf^G_n(\widetilde{M}); \mathbb{Q})\right)^{S_n}\]
By \thref{confg_fg}, \(H^*(\Conf^G_n(\widetilde{M}); \mathbb{Q})\) is a finite type \(\FI_G\)-algebra, and therefore \(H^*(\Conf_n(M); \mathbb{Q})\) is a finite type FI-algebra.
\end{proof}

Homological stability for non-orientable manifolds, which is a consequence of this corollary, was proven by Randal-Williams in \cite{RW}.

\subsection{Dealing with infinite groups}
We pause to explain the complications that arise when \(G\) is infinite, before proceeding to at least partially resolve them. As a toy example, forgetting for a moment the setting of \(\FI_G\) and sequences of spaces, consider the space \(X = S^1 \vee S^1 = K(F_2, 1)\), with one loop called \(a\) and the other \(b\). Let \(Y\) be the \(G := \mathbb{Z}\) cover associated to the kernel of the map \(F_2 \to \mathbb{Z}\), \(a \mapsto 0, b \mapsto 1\). Hence \(Y\) is an infinite sequence of line segments labeled \(b\) joining loops labeled \(a\). So \(Y\) is homotopy equivalent to a wedge of infinitely many circles, and thus \(H_1(Y; k)\) has infinite rank. However, notice that the covering group \(G\) acts on \(Y\), and that \(H_1(Y; k) \cong k[G] = k[b^{\pm 1}]\) as \(G\)-modules.

In particular, \(H_1(Y; k)\) is finitely-generated as a \(G\)-module. However, looking at cohomology, \(H^1(Y; k) = \Hom_k(H_1(Y), k) = \Hom_k(k[G], k) = k^G\). Thus, \(H_1(Y; k)\) consists of finite linear combinations of elements of \(G\), and \(H^1(Y; k)\), infinite linear combinations. In particular, \(H^1(Y; k)\) is no longer finitely generated as a \(G\)-module. However, notice that it contains a dense submodule isomorphic to \(H_1(Y; k)\).

Now we see why the proof of \thref{confg_fg} does not suffice when \(G\) is infinite: in general, the \(E_2\) page is not just be generated by the \(e_{a,g,b}\), which is to say, by finite linear combinations of them, but instead it is generated by all infinite linear combinations of them. Therefore the \(E_2\) page is in general not a finitely-generated \(\FI_G\)-module. 

If we are willing to settle for ``presented in finite degree''---for example, if \(G\) is not virtually polycyclic---then this is good enough:
\begin{thm} \thlabel{confg_coh}
Let \(M\) be a connected, orientable manifold of dimension at least 2. Then the \(\FI_G\)-algebra \(H^*(\Conf^G_n(M); k)\) is presented in finite degree.
\end{thm}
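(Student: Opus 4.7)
The plan is to rerun the Leray spectral sequence argument from the proof of \thref{confg_fg}, replacing Noetherianity with the abelian category property of $\FI_G$-modules presented in finite degree (\thref{coh}). Concretely, I would assemble the spectral sequences
\[ E_2^{i,j}(n) = H^i(M^n; R^j \iota_* k) \Rightarrow H^{i+j}(\Conf^G_n(M); k) \]
associated to the inclusions $\iota: \Conf^G_n(M) \hookrightarrow M^n$ into a single spectral sequence of $\FI_G$-modules. The local-cohomology computation giving the classes $e_{a,g,b}$ and their relations (\ref{presentation}) is purely local and insensitive to whether $G$ is finite, so the $E_2$ page is still generated as an $\FI_G$-algebra by $H^*(M^n; k)$ and by $\{e_{a,g,b}\}$. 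The only difference is that now there is one generator $e_{a,g,b}$ for each $g \in G$, and hence possibly infinitely many, whence the issue flagged in the toy example.

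Next I would check that both pieces are presented in finite degree. By \thref{alg}.4, $H^*(M^n; k)$ is an $\FI_G\sharp$-algebra with $i$-th graded piece generated in degree $i$; combined with \thref{sharp_decomp}, each graded piece is a (possibly infinite) direct sum of induced modules $\Ind^{\FI_G}(W_i)$, which are manifestly presented in finite degree. The module carrying the $e_{a,g,b}$ is generated in degree $2$ by the family $\{e_{1,g,2}\}_{g\in G}$ (infinitely many generators but all in a single bounded degree), and the relations (\ref{presentation}) live in degree $\le 3$; hence it too is presented in finite degree. I would then invoke the proof of \thref{alg}.1 (whose first half is insensitive to finite generation) to conclude that each $E_2^{i,j}$ is presented in finite degree.

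Having established that, the argument becomes routine spectral-sequence chasing, powered by \thref{coh}: the differentials $d_r$ are $\FI_G$-module maps, so their kernels, cokernels, and hence the pages $E_r^{i,j}$ stay presented in finite degree; since the spectral sequence degenerates in each bidegree after finitely many steps, each $E_\infty^{i,j}$ is presented in finite degree. Finally, $H^*(\Conf^G_n(M); k)$ has a finite filtration whose associated graded is $E_\infty$, and the class of $\FI_G$-modules presented in finite degree is closed under extensions (again by \thref{coh}), giving the conclusion.

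The main obstacle, as I see it, is the step that replaces the appeal to Noetherianity: for finite $G$ the submodule spanned by the $e_{a,g,b}$ is finitely generated and all the familiar finiteness-preservation lemmas apply, but here we are working with modules that are genuinely infinitely generated and we must lean entirely on \thref{coh} to propagate finite presentation degree. In particular, one should be careful that the sheaf-cohomology description of $R^j \iota_* k$ over an infinite disjoint union of components $\Delta_{a,g,b}$ still yields an $\FI_G$-module whose relations live in bounded degree, so that the tensor-product step of \thref{alg}.1 really does produce a module presented in finite degree rather than merely generated in finite degree.
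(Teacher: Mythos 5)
Your argument follows the same route as the paper's — reuse the Totaro/Leray spectral sequence as a spectral sequence of $\FI_G$-modules, show the $E_2$ page is presented in finite degree, and push that through the pages via \thref{coh}. The one place where you are imprecise is exactly the place you flag as a worry at the end. Because the fat diagonal has infinitely many components $\Delta_{a,g,b}$ when $G$ is infinite, the stalk computation for $R^j\iota_*k$ produces a \emph{direct product} over $g \in G$, not a direct sum; so the piece of the $E_2$ page "carrying the $e_{a,g,b}$" is the dual of $\langle e_{a,g,b}\rangle$ — the space of \emph{infinite} $k$-linear combinations $\sum_{1\le a<b\le n}\bigl(\sum_{g\in G} v_{a,g,b}\,e_{a,g,b}\bigr)$ — not the $k$-span, as the toy example of $\S 3.2$ and the cohomology of a wedge illustrate. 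Consequently the degree-$2$ generating set is not the family $\{e_{1,g,2}\}_{g\in G}$ (those only generate the span, a proper submodule), but rather the full set of infinite sums $\sum_{g\in G} v_g e_{1,g,2}$ for arbitrary coefficient functions $v\colon G\to k$. Fortunately this is a larger but still degree-$2$ generating set, the Arnold-type relations \eqref{presentation} still sit in degree $3$, and so "generated in degree $2$, related in degree $3$" — and hence "presented in finite degree" — survives; the rest of your spectral-sequence chase via \thref{coh} goes through unchanged. So the conclusion is right, but if you rewrite this, describe the $E_2$ module as the dual space and name the infinite sums as the generators, as the paper does, rather than the individual $e_{1,g,2}$'s.
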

\begin{proof}
The argument from \thref{confg_fg} carries over essentially directly. The \(E_2\) page of the spectral sequence (\ref{tot_ss}) is generated as a \(k\)-algebra by \(H^*(M^n; k)\) and the dual space to \(\langle e_{a,g,b} \rangle\), that is, the space of infinite linear combinations
\[ \sum_{a,g,b} v_{a,g,b} e_{a,g,b} = \sum_{1 \le a < b \le n} \left( \sum_{g \in G} v_{a,g,b} e_{a,g,b} \right) \]
This space is evidently generated as an \(\FI\)-module by those sums of the form \(\sum_{g \in G} v_{g} e_{1,g,2}\), since we can get all other \(a,b\) by appropriate permutations. These generators evidently live in degree 2, and the relations (\ref{presentation}) all live in degree 3, so that the \(E_2\) page is presented in finite degree. By \thref{coh}, taking successive pages in the spectral sequences preserves being presented in finite degree, as does passing from the \(E_\infty\) page to the final cohomology. So we conclude that \(H^*(\Conf^G_n(M); k)\) is presented in finite degree.
\end{proof}

However, if we want to preserve finite generation, the analysis at the beginning of this subsection suggests that the correct thing to look at is actually \(H_\bullet(\Conf^G(M); k)\) instead. Unfortunately, on the face of it, \(H_\bullet(\Conf^G(M); k)\) is a co-\(\FI_G\)-module, so since the maps go ``in the wrong way'', it is never finitely generated.

However, when the quotient \(M/G\) is an \emph{open} manifold, we obtain the following generalization of \cite[Prop 6.4.2]{CEF}.
\begin{thm}[\bfseries Orbit configuration spaces of open manifolds]
\thlabel{boundary_sharp}
Let \(N\) be the interior of a connected, compact manifold \(\overline{N}\) of dimension \(\ge 2\) with nonempty boundary \(\partial\overline{N}\), and let \(\pi: M \to N\) be a \(G\)-cover, so that \(G\) acts freely and properly discontinuously on \(M\). Then \(\Conf^G(M)\) has the structure of a homotopy \(\FI_G\sharp\)-space, that is, a functor from \(\FI_G\sharp\) to hTop, the category of spaces and homotopy classes of maps.
\end{thm}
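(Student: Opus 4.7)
The plan is to mimic Church--Ellenberg--Farb's \cite[Prop 6.4.2]{CEF} construction for classical configuration spaces of open manifolds, lifting everything $G$-equivariantly to the cover $\pi: M \to N$. The open structure of $\overline{N}$ gives a collar at infinity into which existing configurations can be shrunk, leaving a reservoir of free space in which to park the ``new'' points introduced by partial $\FI_G\sharp$-morphisms.

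Concretely, I would fix a collar $c: \partial\overline{N} \times [0,1) \hookrightarrow \overline{N}$ together with a compactly supported smooth isotopy $\phi_t: N \to N$, $\phi_0 = \id$, such that $\phi_1(N) \cap c(\partial\overline{N} \times [0, 1/2]) = \emptyset$. Unique path-lifting lifts $\phi_t$ to a $G$-equivariant isotopy $\tilde\phi_t: M \to M$ (equivariance follows from uniqueness of lifts, applied to each $g \in G$). I would then pick distinct points $p_1, p_2, \ldots \in c(\partial\overline{N} \times (1/2, 1))$ with chosen lifts $\tilde p_i \in M$; because $G$ acts freely and the $p_i$ are distinct in $N$, the orbits $G \cdot \tilde p_i$ are pairwise disjoint and disjoint from $\tilde\phi_1(M)$. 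To each morphism $(Z, f): [m] \to [n]$ of $\FI_G\sharp$, with $f = (a, (g_i)_{i \in Z})$, I would assign the map
\[
(x_j)_{j \in [n]} \longmapsto (y_i)_{i \in [m]}, \qquad y_i = \begin{cases} g_i \cdot \tilde\phi_1(x_{a(i)}) & i \in Z, \\ \tilde p_i & i \notin Z, \end{cases}
\]
viewed as $\Conf^G_n(M) \to \Conf^G_m(M)$; the self-duality of partial-injection categories then converts this contravariant assignment into the covariant functor required by the statement. The output is a valid orbit configuration: the $Z$-coordinates lie in the $G$-invariant set $G \cdot \tilde\phi_1(M)$ with pairwise distinct orbits (since $\tilde\phi_1$ is a $G$-equivariant diffeomorphism of a configuration), while the remaining coordinates fill in the pairwise disjoint reservoir orbits.

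It remains to verify functoriality up to homotopy. Composing the maps assigned to $(Z_1, f_1): [l] \to [m]$ and $(Z_2, f_2): [m] \to [n]$ differs from the map assigned to their composite in exactly two ways: on coordinates that survive both morphisms, the formula carries $\tilde\phi_1^2$ instead of $\tilde\phi_1$, and on coordinates that are filled in by $(Z_2, f_2)$ and then retained by $(Z_1, f_1)$, one sees a point of the form $h \cdot \tilde\phi_1(\tilde p_j)$ rather than some $\tilde p_k$. The first discrepancy is killed by the $G$-equivariant isotopy $\tilde\phi_t \circ \tilde\phi_1$ connecting $\tilde\phi_1^2$ to $\tilde\phi_1$, which keeps the $Z$-coordinates in disjoint orbits throughout. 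The second is killed by an equivariant isotopy inside the collar interpolating the relevant reservoir points. The main obstacle I foresee is precisely this last compatibility check: arranging that the family of collar isotopies can be carried out simultaneously across every affected coordinate while preserving disjointness of $G$-orbits, both among the moving points and with respect to the shrunk $Z$-coordinates. Since $G$ acts freely and properly discontinuously, each $G$-orbit has codimension $\dim M \ge 2$, and the collar contains arbitrarily many disjoint $G$-invariant tubular neighborhoods of the $\tilde p_i$, so the required collision-free equivariant isotopies exist by standard transversality.
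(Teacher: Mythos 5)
Your construction is the paper's, which adapts \cite[Prop 6.4.2]{CEF}: shrink the given configuration into the deep part of $M$ by a $G$-equivariant map isotopic to the identity, and fill the unused coordinates with points in a reservoir of pairwise distinct $G$-orbits sitting in the collar region that the shrunk image avoids. Your explicit contravariant formula followed by self-duality of $\FI_G\sharp$ is equivalent to the paper's direct covariant factorization of an $\FI_G\sharp$-morphism into a co-$\FI_G$ restriction followed by a fill-in map $\Psi^Y_X$.

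Two slips in your setup would, taken literally, defeat the construction. A \emph{compactly supported} isotopy of $N$ has $\phi_1$ a diffeomorphism of $N$, hence surjective, so $\phi_1(N)$ cannot be disjoint from any nonempty open set; what you want is a $G$-equivariant open embedding $\Phi: M \hookrightarrow M$ isotopic to the identity whose image is $M \setminus \overline{\pi^{-1}(S)}$ for a collar $S$, as the paper uses, and this is decidedly not compactly supported. Separately, having arranged $\phi_1(N) \cap c(\partial\overline{N} \times (0, 1/2]) = \emptyset$, you should place the reservoir points $p_i$ in the evacuated region $c(\partial\overline{N} \times (0, 1/2))$, not in $c(\partial\overline{N} \times (1/2, 1))$, which $\phi_1(N)$ may still meet; with your choice the claimed disjointness of the orbits $G \cdot \tilde p_i$ from $\tilde\phi_1(M)$ does not follow. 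Both are easy to repair, and everything else --- the equivariant lift via unique path lifting, the freeness argument for pairwise disjoint reservoir orbits, and the isotopy verification of functoriality up to homotopy including the collision-freedom point you flag --- is sound and matches the paper (which obtains well-definedness of $\Psi^Y_X$ from connectedness of $\Conf^G_{Y-X}(R_0)$ rather than from a fixed reservoir sequence, but the content is the same).
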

\begin{proof}
We follow the argument in \cite{CEF}. Fix a collar neighborhood \(S\) of one component of \(\partial \overline{N}\), let \(R = \pi^{-1}(S)\) and let \(R_0\) be a connected component of \(R\), and fix a homeomorphism \(\Phi: M \cong M \setminus \overline{R}\) isotopic to the identity (\(\Phi\) and the isotopy both exist by lifting). For any inclusion of finite sets \(X \subset Y\), define a map
\[ \Psi^Y_X: \Conf^G_X(M) \to \Conf^G_Y(M)\]
up to homotopy, as follows. First, if \(Y = X\), set \(\Psi^Y_X = \id\). Next, note that a configuration in \(\Conf^G_X(M)\) is just an embedding \(X \hookrightarrow M\) (that stays injective upon composition with \(\pi\)). So fix an element \(q_X^Y: (Y - X) \hookrightarrow R_0\) of \(\Conf^G_{Y - X}(R_0)\). Then any embedding \(f: X \hookrightarrow M\) in \(\Conf^G_X(M)\) extends to an embedding \(\Psi^Y_X(f): Y \hookrightarrow M\) by
\[ \Psi^Y_X(f)(t) = \begin{cases} \Phi(f(t)) & t \in X \\ q^Y_X(t) & t \notin X \end{cases} \]
The image of \(\pi \circ \Phi\) is disjoint from \(S\), while the image of \(\pi \circ q^Y_X\) is contained in \(S\), so the above map never send points in \(X\) into the same \(G\)-orbit as it sends points outside of \(X\), and therefore this does give an element of \(\Conf^G_Y(M)\). Furthermore, since \(\Conf^G_{Y - X}(R_0)\) is connected (since \(R_0\) is, and \(\dim R_0 \ge 2\)), different choices of \(q^Y_X\) give homotopic maps, so \(\Phi^Y_X\) is well-defined up to homotopy.

Now, an \(\FI_G\sharp\) morphism \(Z \to Y\) consists of an injection \(X \hookrightarrow Z\), an injection \(X \hookrightarrow Y\), and a tuple \(g: X \to G\). Normally if we were extending from an \(\FI_G\) structure, we would think of \(X\) as being a subset of \(Z\), but since we are extending from a \emph{co}-\(\FI_G\) structure, it is more natural to think of \(X\) as a subset of \(Y\), with an explicit map \(a: X \to Z\). The induced map is then given by
\begin{align*}
\Conf^G_Z(M) & \to \Conf^G_X(M) \xrightarrow{\Psi^Y_X} \Conf^Y(M) \\
(m_i) & \mapsto  (g_i m_{a(i)})
\end{align*}
It is straightforward to verify that this is functorial up to homotopy, as \cite[Prop 6.4.2]{CEF} do for trivial \(G\).
\end{proof}
In particular, when the conditions of \thref{boundary_sharp} hold, then \(H_*(\Conf^G(M); k)\) is an \(\FI_G\sharp\)-module. We want to argue that, when \(G\) is virtually polycyclic, \(H_*(\Conf^G(M); k)\) is finitely-generated type. To do this, we need the following.
\begin{prop} \thlabel{arnold}
Let \(A(G,d)\) be the \(\FI_G\sharp\)-algebra where \(A(G)_n\) has generators \(\{e_{a,g,b} \mid 1 \le a \ne b \le n, g \in G\}\) of degree \(d-1\) and action given by (\ref{action}), modulo the following relations:
\begin{align*}
e_{a,g,b} &= (-1)^d e_{b,g^{-1},a} \\
e_{a,g,b}^2 &= 0 \\
e_{a,g,b} \wedge e_{b,h,c} &= (e_{a,g,b} - e_{b,h,c}) \wedge e_{a,gh,c}
\end{align*}
Then \(A(G,d)\) is an \(\FI_G\sharp\)-module of finitely-generated type.
\end{prop}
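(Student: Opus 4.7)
The plan is to first identify an \(\FI_G\sharp\)-structure on \(A(G,d)\) and then to leverage the algebra structure, together with a forest-basis reduction, to obtain finite generation of each graded piece. For the \(\FI_G\sharp\)-structure, I observe that the module \(V\) spanned by the degree-\((d-1)\) generators \(\{e_{a,g,b}\}\) is precisely the \(\FI_G\)-module \(\Ind^{\FI_G}(k[G])\) appearing in (\ref{irr_decom}), where \(k[G]\) carries the biregular \(W_2\)-representation described just before (\ref{irr_decom}). Any induced module of this form is naturally an \(\FI_G\sharp\)-module, so \(V\) inherits an \(\FI_G\sharp\)-structure extending the \(W_n\)-action (\ref{action}). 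The tensor algebra \(T(V)\) then carries an \(\FI_G\sharp\)-structure in the obvious way, and because the three relations defining \(A(G,d)\) are manifestly \(\FI_G\sharp\)-equivariant, their quotient \(A(G,d)\) is an \(\FI_G\sharp\)-algebra.

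To control each graded piece, I next apply \thref{alg}.2. Even for infinite \(G\), the module \(V\) is finitely generated as an \(\FI_G\)-module by the single element \(e_{1,e,2}\): the biregular \(W_2\)-action on \(k[G]\) satisfies \((g_1,g_2)\cdot 1 = g_1 g_2^{-1}\), which ranges over all of \(G\) as \(g_1,g_2\) do, so \(k[G]\) is cyclic as a \(W_2\)-module and \(V \cong \Ind^{\FI_G}(k[G])\) is generated in \(\FI_G\)-degree \(2\). Applying \thref{alg}.2, the \(i\)-fold product piece \(A(G,d)^{i(d-1)}\) is generated as an \(\FI_G\)-module in degree at most \(2i\). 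It remains to upgrade ``generated in bounded degree'' to ``finitely generated'', for which it suffices to show that the \(W_{2i}\)-module \(A(G,d)^{i(d-1)}_{2i}\) has only finitely many orbits of generators.

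For this I would establish a forest-basis reduction adapted to the \(G\)-labeled setting. Iterated application of the third relation \(e_{a,g,b}\wedge e_{b,h,c} = (e_{a,g,b}-e_{b,h,c})\wedge e_{a,gh,c}\) together with the antisymmetry relation rewrites every monomial as a linear combination of \emph{forest monomials}---those whose underlying edge set \(\{(a_k,b_k)\}\) forms a forest on \([n]\)---in complete analogy with Arnold's classical cycle-killing argument for \(H^*(\Conf_n(\mathbb{R}^d))\), for which the third relation is a direct \(G\)-labeled generalization. Given a forest monomial, the \(G^n\) factor of \(W_n\) acts on its edge labels by \(g_k \mapsto h_{a_k} g_k h_{b_k}^{-1}\), and because a forest contains no cycles one can choose vertex gauges \(h_v \in G\) inductively, working outward from a root in each connected component, so as to trivialize every \(g_k\) simultaneously. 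Hence every forest monomial lies in the \(W_n\)-orbit of an all-trivial-labeled forest, and such orbits are parametrized by isomorphism classes of unlabeled forests with \(i\) edges---a finite set independent of \(n\). Thus \(A(G,d)^{i(d-1)}_{2i}\) is finitely generated as a \(W_{2i}\)-module, which proves the claim.

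The main obstacle is the forest reduction itself: one must verify that the third relation (whose right side involves the \(G\)-label product \(gh\)) indeed eliminates cycles and that the rewriting process terminates, but ordering monomials by the first Betti number of their underlying graph reduces this to Arnold's classical termination argument, with the \(G\)-labels being carried along transparently. The gauging step is strictly easier once the forest structure is in hand, since the absence of cycles makes the choice of vertex gauges entirely unobstructed.
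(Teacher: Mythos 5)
Your proposal is correct and takes essentially the same approach as the paper: reduce to the forest (nbc-type) spanning set, then use the $G^n$-action to gauge away the edge labels, exploiting that a forest has no cycles so that the vertex gauges can be chosen consistently. The paper phrases the gauging by processing edges in increasing order of their larger endpoint $b_s$ rather than by rooting each tree component, but this is the same argument.
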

\begin{proof}
First, by construction \(A(G,d)\) is presented in finite degree, so it remains to show that each \(A(G,d)_n\) is of finitely-generated type. For convenience, put \(D = d-1\), so that \(A(G,d)_n\) is only nonzero is degree divisible by \(D\). It is straightforward to verify (e.g., see [ARNOLD]) that \(A(G,d)^{iD}_n\) is spanned as a vector space by all products of the form
\begin{equation} \label{arnold_span}
v = e_{a_1, g_1, b_1} \wedge e_{a_2, g_2, b_2} \wedge \cdots \wedge e_{a_i, g_i, b_i} \;\; \text{ where } a_s < b_s, \;\;\; b_1 < b_2 < \cdots < b_i, \;\;\; g_s \in G
\end{equation}
We will describe an explicit procedure which, given such a \(v\), finds an element \(\vec h \in G^n\) so that
\[
v' := \vec h \cdot v = e_{a_1, e, b_1} \wedge e_{a_2, e, b_2} \wedge \cdots \wedge e_{a_i, e, b_i}
\]
that is, so that each \(g'_i = e\) in (\ref{arnold_span}). We construct \(\vec h\) inductively. To begin, we put a copy of \(g_1\) in the \(b_1\)-th coordinate of \(\vec h\), which cancels out the \(g_1\) in (\ref{arnold_span}). We multiply \(v\) by this partial \(\vec h\), which may have the effect of modifying later \(g_i\)'s. We then proceed and put a copy of (the new, modified) \(g_2\) in the \(b_2\)-th coordinate of \(\vec h\), which cancels the \(g_2\) in (\ref{arnold_span}). We can do this with no trouble because we know \(b_1 < b_2\). Again, we use this to modify \(v\) and proceed to \(b_3\), etc. Eventually we have constructed our \(\vec h\) and modified \(v\) so that each \(g_s = e\).

We therefore conclude that \(A(G,d)^{iD}_n\) is finitely generated as a \(G^n\)-module, so \emph{a fortiori} as a \(W_n\)-module. Therefore \(A(G,d)\) is of finitely-generated type.
\end{proof}

We therefore obtain the following.
\begin{thm}[\bfseries Homology of orbit configuration spaces]
\thlabel{confg_fgsharp}
Let \(N\) be the interior of a connected, compact manifold of dimension \(\ge 2\) with nonempty boundary. Let \(M \to N\) be a \(G\)-cover, with \(G\) virtually polycyclic, such that \(H_*(M)\) is of finite type. Then \(H_*(\Conf^G(M); k)\) is a finitely-generated type \(\FI_G\sharp\)-module.
\end{thm}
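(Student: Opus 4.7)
The plan is to combine the $\FI_G\sharp$-structure provided by \thref{boundary_sharp} with a homological analog of the Leray-type spectral sequence used in the proof of \thref{confg_fg}, using \thref{arnold} to bypass the ``infinite sums of $e_{a,g,b}$'' obstruction that forced us to settle for mere finite presentation degree in the cohomological setting of \thref{confg_coh}.

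First, by \thref{boundary_sharp}, $H_*(\Conf^G(M); k)$ is a graded $\FI_G\sharp$-module, so it suffices to show that each graded piece $H_j$ is a finitely generated $\FI_G$-module. I would run the Leray spectral sequence for the open inclusion $\iota_n\colon \Conf^G_n(M) \hookrightarrow M^n$ in homology---for instance, by dualizing Totaro's cohomology spectral sequence term-by-term (legitimate since $k$ is a field, each stalk of $R^j\iota_*k$ being finite-dimensional), or more directly by using the filtration of $M^n$ by the components of the fat diagonal $\bigcup \Delta_{a,g,b}$. Functoriality of the spectral sequence with respect to partial morphisms makes it a spectral sequence of $\FI_G\sharp$-modules converging to $H_*(\Conf^G(M); k)$.

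The key point is that in homology each class is a \emph{finite} linear combination of local cycles indexed by the components $\Delta_{a,g,b}$, not an infinite product over $g \in G$ as happens in cohomology. Hence the $E^2$-page is generated, as a graded algebra, by $H_*(M^n; k)$ together with the Arnold-type classes $e_{a,g,b}$ satisfying exactly the relations of \thref{arnold}, so that the subalgebra they generate is precisely $A(G,d)_n$ for $d = \dim M$. By \thref{arnold}, $A(G,d)$ is an $\FI_G\sharp$-module of finitely-generated type. Since $H_*(M;k)$ is of finite type, \thref{alg}(3) applied to $V = H_*(M;k)$ (noting that $V^0 = k$ by connectivity and $V^{\otimes n} = H_*(M^n;k)$ by K\"unneth) shows that $H_*(M^n;k)$ is a finite-type $\FI_G\sharp$-module. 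Applying \thref{alg}(1) to the tensor product, the $E^2$-page is of finitely-generated type. Noetherianity of $\FI_G\Mod$ for virtually polycyclic $G$ (Sam--Snowden) then propagates finitely-generated type through subquotients to $E^\infty$, and hence to the abutment $H_*(\Conf^G(M); k)$.

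The main obstacle is the careful construction of the homological Leray spectral sequence as a spectral sequence of $\FI_G\sharp$-modules and the identification of its $E^2$-page with (a subalgebra of) $H_*(M^n; k) \otimes A(G,d)_n$; once that identification is established, the remainder of the argument is a direct application of \thref{arnold}, \thref{alg}, and Noetherianity, paralleling the proof of \thref{confg_fg} almost verbatim.
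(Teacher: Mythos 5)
Your proposal matches the paper's argument in all essentials: pass to a ``finite-sum'' version of the $E^2$ page, identify it with (a twist by $H_*(M^n)$ of) the algebra $A(G,d)$ from \thref{arnold}, and then run the Noetherianity argument exactly as in \thref{confg_fg}. The one place you gloss over a point the paper flags explicitly is the source of the algebra structure. You write that the homological $E^2$-page is ``generated, as a graded algebra,'' by $H_*(M^n;k)$ and the classes $e_{a,g,b}$ --- but homology has no natural cup product, and \thref{arnold} genuinely needs the multiplication (the spanning set (\ref{arnold_span}) is obtained by rewriting products using the three relations). The paper avoids this by opening the proof with ``we still need to appeal to cohomology, in order to make use of the cup product structure'': it works inside the \emph{cohomology} Leray $E_2$ page, takes the $\FI_G$-submodule generated by $H^*(M^n)$ and finite linear combinations of the $e_{a,g,b}$ (a genuine subalgebra there), and then observes that this submodule is isomorphic as an $\FI_G$-module to the homological $E^2$-page coming from cosheaf homology. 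Your dualization idea can be made to work, but you should make the same detour through cohomology to legitimately invoke \thref{arnold}; as written, the claim that the homology $E^2$-page is an algebra is not justified, even though the resulting module-level identification is correct.
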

\begin{proof}
We still need to appeal to cohomology, in order to make use of the cup product structure. So the proof follows that of \thref{confg_fg}, but only considering the sub-\(\FI_G\)-module of the \(E_2\) page that actually is generated by \(H^*(M^n)\) and the \(e_{a,g,b}\), and not the infinite linear combinations of them. Notice that this is isomorphic to the \(E^2\) page associated to the appropriate spectral sequence computing the homology of \(\Conf^G(M)\) (to be technical, this comes from \emph{cosheaf} homology). 

This submodule of the \(E_2\) page is precisely the algebra described in \thref{arnold}. Thus it is of finitely-generated type, and therefore the \(E^2\) page for homology is an \(\FI_G\)-module of finitely-generated type. The same final argument from \thref{confg_fg} (which uses Noetheriantiy, since \(G\) is virtually polycyclic) thus shows that \(H_*(\Conf^G(M); k)\) is of finitely-generated type.
\end{proof}

\begin{cor}
Let \(N\) be the interior of a connected, compact manifold of dimension \(\ge 2\) with nonempty boundary, let \(M \to N\) be a \(G\)-cover, with \(G\) virtually polycyclic, \(M\) connected and orientable and \(H^*(M; \mathbb{Q})\) of finite type. Then for each \(i\), the \(W_n\)-representations \(H_i(\Conf^G_n(M); \mathbb{Q})\) satisfy \(K_0\)-stability.
\end{cor}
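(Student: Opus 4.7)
The plan is to reduce the corollary directly to two results already established: Theorem \thref{confg_fgsharp} on the homology of orbit configuration spaces of open manifolds, and the general $K_0$-stability theorem (Theorem 2.5 in the excerpt).

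First, I would verify that the hypotheses of Theorem \thref{confg_fgsharp} are met with $k = \mathbb{Q}$. The ambient manifold $N$ is by hypothesis the interior of a connected compact manifold of dimension $\ge 2$ with nonempty boundary, and $M \to N$ is a $G$-cover with $G$ virtually polycyclic. Finally, over the field $\mathbb{Q}$, the finite type condition on $H^*(M; \mathbb{Q})$ is equivalent to the same condition on $H_*(M; \mathbb{Q})$ (by dualizing dimension-wise). Thus Theorem \thref{confg_fgsharp} applies and yields that $H_*(\Conf^G(M); \mathbb{Q})$ is a finitely-generated type $\FI_G\sharp$-module. By definition of finitely-generated type, each graded piece $H_i(\Conf^G(M); \mathbb{Q})$ is then a finitely-generated $\FI_G$-module (after forgetting the extra $\FI_G\sharp$-structure, which we do not need here).

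Second, since $G$ is virtually polycyclic, Theorem 2.5 applies verbatim to the finitely-generated $\FI_G$-module $H_i(\Conf^G(M); \mathbb{Q})$, producing the desired $K_0$-stability statement. Combining the two steps gives the corollary.

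I do not expect a substantive obstacle, because both ingredients have been carefully set up earlier: the orientability assumption is precisely what is used inside \thref{confg_fgsharp} so that the Leray spectral sequence argument (via \thref{arnold}) produces the generators $e_{a,g,b}$ in the expected form, and connectedness together with finite type of $H^*(M; \mathbb{Q})$ feeds into part (4) of \thref{alg} so that $H^*(M^n; \mathbb{Q})$ is of finite type on the $E_2$ page. Once \thref{confg_fgsharp} is invoked, the passage to $K_0$-stability is mechanical, and no further analysis of the differentials or of the structure of $W_n = G \wr S_n$ is required beyond what Theorem 2.5 already provides.
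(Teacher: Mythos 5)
Your proposal is correct and is exactly the intended argument: the corollary is a direct combination of Theorem \thref{confg_fgsharp} (which gives that each $H_i(\Conf^G(M);\mathbb{Q})$ is a finitely-generated $\FI_G$-module under these hypotheses) with Theorem 2.5 ($K_0$-stability for finitely-generated $\FI_G$-modules over virtually polycyclic $G$). The observation that $H^*(M;\mathbb{Q})$ of finite type is equivalent (over a field) to $H_*(M;\mathbb{Q})$ of finite type correctly bridges the hypothesis as stated in the corollary to the one in Theorem \thref{confg_fgsharp}.
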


\subsection{Homotopy groups of configuration spaces}
The main application of the theory of \(\FI_G\)-modules until this point has been in Kupers-Miller's \cite{KM} work on the homotopy groups of configuration spaces. They prove that, for \(M\) a simply-connected manifold of dimension at least 3, the dual homotopy groups \(\Hom(\pi_i(\Conf_n(M)), \mathbb{Z})\) form a finitely-generated \(\FI\)-module. In \cite[\S5.2]{KM}, they sketch an extension of this result to the non-simply connected case. Kupers-Miller are naturally led to consider \(\Conf_n^{\pi_1(M)}(\widetilde{M})\), since as we have said it is the universal cover of \(\Conf_n(M)\) once \(\dim M \ge 3\), and so has the same higher homotopy groups as \(\Conf_n(M)\).

Our results on orbit configuration spaces are able to confirm most of Kupers-Miller's sketch, while also clarifying some oversights. As stated, their Prop 5.8 is not correct, since as we have seen, if \(G\) is infinite, in general we cannot conclude that \emph{co}homology is finitely generated. Instead, the best we can do is our \thref{confg_fg} and \thref{confg_fgsharp}, where we either assume that \(G\) is finite or that \(M\) is an open manifold. We therefore obtain the following. Note that, following Kupers-Miller, we work here with \(\mathbb{Z}\) coefficients, since as we mentioned, we could rework \thref{confg_fg} to use \(\mathbb{Z}\) coefficients.

\begin{thm}[\bfseries Homotopy groups of configuration spaces, take 1]
Let \(M\) be a connected manifold of dimension \(\ge 3\) with finite fundamental group \(G\) such that \(H^*(M)\) is finite-dimensional. For \(i \ge 2\), the dual homotopy groups \(\Hom(\pi_i(\Conf_n(M)), \mathbb{Z})\) and \(\Ext^1_{\mathbb{Z}}(\pi_i(\Conf_n(M)), \mathbb{Z})\) are finitely generated \(\FI_G\)-modules.

In particular, if \(k\) is a splitting field for \(G\) of characteristic 0, then for each \(i\), the characters of the \(W_n\)-representations \(\Hom(\pi_1(\Conf_n(M)), k)\) are given by a single character polynomial for \(n \gg 0\), and \(\{\Hom(\pi_1(\Conf_n(M)), k)\}\) satisfies representation stability.
\end{thm}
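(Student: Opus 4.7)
The plan is to import Kupers-Miller's strategy \cite{KM} into the $\FI_G$-setting. Since $\dim M \ge 3$ and $G = \pi_1(M)$ is finite, the transversality observation of \S 3.1 shows that $\Conf^G_n(\widetilde M) \to \Conf_n(M)$ is the universal cover, so $\pi_i(\Conf_n(M)) \cong \pi_i(\Conf^G_n(\widetilde M))$ for $i \ge 2$. Because $G$ is finite, $\widetilde M \to M$ is a finite cover, so $H^*(\widetilde M; \mathbb Z)$ is still finitely generated in each degree. It therefore suffices to prove the theorem for the simply-connected co-$\FI_G$-space $\Conf^G(\widetilde M)$.

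First I would invoke \thref{confg_fg} (in its $\mathbb Z$-coefficient version, via the CEFN adaptation noted immediately after its statement) to conclude that each $H^j(\Conf^G(\widetilde M); \mathbb Z)$ is a finitely-generated $\FI_G$-module whose values are finitely-generated abelian groups. Since $\pi_i$ is covariant, applying it levelwise gives $\pi_i(\Conf^G(\widetilde M))$ the structure of a co-$\FI_G$-module, and the contravariant functors $\Hom(-,\mathbb Z)$ and $\Ext^1(-,\mathbb Z)$ then produce the desired $\FI_G$-modules.

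Next I would carry out an induction on $i \ge 2$ using the Postnikov tower of $\Conf^G_n(\widetilde M)$ taken levelwise in $n$. The base case $i = 2$ follows from Hurewicz together with universal coefficients applied to $H^2$ and $H^3$. For the inductive step, at each stage there is a principal fibration $K(\pi_i, i) \to X_{\le i} \to X_{\le i-1}$; running the associated Serre spectral sequence \emph{as a spectral sequence of $\FI_G$-modules} and using the classical description of $H^*(K(A,n); \mathbb Z)$ for finitely-generated abelian $A$ lets one extract $\Hom(\pi_i, \mathbb Z)$ and $\Ext^1(\pi_i, \mathbb Z)$ as subquotients of the cohomology of $X_{\le i}$ sitting in specific low degrees. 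Noetherianity of $\FI_G\Mod$ \cite{SS2} preserves finite generation under each of these subquotient operations, closing the induction.

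The "in particular" statement is then immediate: tensoring with $k$ preserves finite generation, so $\Hom(\pi_i(\Conf_n(M)), k)$ is a finitely-generated $\FI_G$-module and hence satisfies representation stability and is eventually given by a single character polynomial, by \cite[Thm 1.10]{GL3} and \thref{charpoly}. The main obstacle I anticipate is carefully upgrading Kupers-Miller's Postnikov-tower-plus-Serre-spectral-sequence argument to be functorial in $\FI_G$---in particular ensuring that the extraction of $\Hom$ and $\Ext^1$ from the low-degree Eilenberg-MacLane cohomology respects the $\FI_G$-action, and that the $k$-invariants classifying successive Postnikov stages are morphisms of $\FI_G$-modules. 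Once this naturality is verified, the underlying topological arguments carry over essentially verbatim from \cite{KM}.
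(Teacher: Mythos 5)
Your proposal matches the route the paper itself takes: there is no standalone proof in the text---the theorem is presented as the consequence of feeding \thref{confg_fg} (adapted to $\mathbb{Z}$ coefficients via the CEFN remark) into the Kupers--Miller Postnikov-tower/Serre-spectral-sequence machinery run functorially over $\FI_G$, using Noetherianity to propagate finite generation through the subquotients, which is exactly what you outline. One small point worth tightening is your passing assertion that $H^*(\widetilde M;\mathbb{Z})$ is degreewise finitely generated because $\widetilde M \to M$ is a finite cover: this is the form in which the finiteness hypothesis actually gets used in \thref{confg_fg}, so it deserves a word of justification (e.g.\ via a finite CW structure or the fact that $H^*(\widetilde M)\cong H^*(M;p_*\mathbb{Z})$ for the finite-rank local system $p_*\mathbb{Z}$), though the paper's own hypothesis on $H^*(M)$ leaves this implicit as well.
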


Applying \thref{confg_coh}, we obtain the following.
\begin{thm}[\bfseries Homotopy groups of configuration spaces, take 2]
Let \(M\) be a connected, orientable manifold of dimension at least 2. Then the dual homotopy groups \(\Hom(\pi_i(\Conf_n(M), \mathbb{Z})\) and \(\Ext^1_{\mathbb{Z}}(\pi_i(\Conf_n(M)), \mathbb{Z})\) are \(\FI_G\)-modules presented in finite degree.
\end{thm}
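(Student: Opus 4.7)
The plan is to adapt Kupers--Miller's Postnikov-tower argument to our setting, using \thref{confg_coh} as the cohomological input and \thref{coh} in place of Noetherianity to propagate the property ``presented in finite degree'' through the relevant spectral sequences. Set $G = \pi_1(M)$.

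First, I would reduce to the universal cover. When $\dim M \ge 3$, the transversality argument recalled earlier in this section shows that $\Conf^G_n(\widetilde M) \to \Conf_n(M)$ is the universal cover, so $\pi_i(\Conf_n(M)) \cong \pi_i(\Conf^G_n(\widetilde M))$ for $i \ge 2$ and the latter spaces are simply connected co-$\FI_G$-spaces. When $\dim M = 2$, the Fadell--Neuwirth fibration makes $\Conf_n(M)$ aspherical, so higher homotopy vanishes; the case $i = 1$ is controlled by $H^1$ via Hurewicz and universal coefficients, which is handled directly by \thref{confg_coh}. It therefore suffices to treat $i \ge 2$ with $X_n := \Conf^G_n(\widetilde M)$, using the integer-coefficient version of \thref{confg_coh} (as discussed after \thref{confg_fg}) to know that $H^*(X_n; \mathbb{Z})$ is an $\FI_G$-algebra presented in finite degree.

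Next, I would build a Postnikov tower $\cdots \to X_n\langle k\rangle \to X_n\langle k-1\rangle \to \cdots$ functorially in $n$, so that each stage and each principal fibration $K(\pi_k X_n, k) \to X_n\langle k\rangle \to X_n\langle k-1\rangle$ is a morphism of co-$\FI_G$-spaces. Inductively, the associated Serre spectral sequence, together with the Hurewicz theorem for the $(k-1)$-connected space $X_n\langle k\rangle$ and the universal coefficient short exact sequence, exhibits $\Hom(\pi_k X_n, \mathbb{Z})$ and $\Ext^1_{\mathbb{Z}}(\pi_k X_n, \mathbb{Z})$ as concrete subquotients of the $\FI_G$-modules $H^k$ and $H^{k+1}$ of neighboring Postnikov stages. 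Since the category of $\FI_G$-modules presented in finite degree is abelian by \thref{coh}, being presented in finite degree is preserved under kernels, cokernels, and extensions, and in particular under each Serre differential, edge map, and universal coefficient connecting map. Passing to the next Postnikov stage, the cohomology is again presented in finite degree, and the induction continues.

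The main obstacle will be the functorial construction of the Postnikov tower as a tower of co-$\FI_G$-spaces and the careful bookkeeping that each of the ingredients---Serre differentials, edge homomorphisms, the Hurewicz map, and the universal coefficient connecting maps---is genuinely a morphism of $\FI_G$-modules, so that \thref{coh} can be invoked at every step. This is essentially a naturality check; the substantive difference from Kupers--Miller's simply connected, finitely generated argument is the replacement of Noetherianity by \thref{coh}, but once naturality is in place the rest of their argument transports directly.
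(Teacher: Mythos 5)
Your proposal follows the same route the paper intends: it is exactly the Kupers--Miller Postnikov-tower argument, with \thref{confg_coh} substituted for their finitely-generated cohomology input and \thref{coh} (the abelian category of $\FI_G$-modules presented in finite degree) substituted for Noetherianity when chasing the property through the Serre spectral sequences of the Postnikov stages. The paper states the theorem by simply saying ``Applying \thref{confg_coh}, we obtain the following,'' leaving the transport to the reader, and your proposal correctly supplies the missing bookkeeping: the reduction to $\Conf^G_n(\widetilde M)$ as universal cover when $\dim M \ge 3$, the observation that a functorial Postnikov tower and the universal coefficient sequence are co-$\FI_G$-natural, and the invocation of \thref{coh} at each kernel/cokernel/subquotient step. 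You also correctly flag that one must upgrade \thref{confg_coh} from field coefficients to $\mathbb{Z}$ coefficients (as the paper mentions following \thref{confg_fg}), which the paper leaves implicit; this is a genuine loose end the paper glosses over but that your write-up handles appropriately.
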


Applying \thref{confg_fgsharp}, we also obtain the following.

\begin{thm}[\bfseries Homotopy groups of configuration spaces, take 3]
Let \(M\) be the interior of a connected, compact manifold with nonempty boundary of dimension \(\ge 3\) such that \(G = \pi_1(M)\) is virtually polycyclic. For \(i\) at least 2, \(\pi_i(\Conf_n(M))\) is a finitely-generated \(\FI_G\sharp\)-module. In particular, \(\pi_i(\Conf_n(M)) \otimes \mathbb{Q}\) satisfies \(K_0\)-stability.

\end{thm}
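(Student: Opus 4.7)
The plan is to exploit the universal cover of \(\Conf_n(M)\) to reduce to a simply-connected situation, transfer the \(\FI_G\sharp\)-structure, and then promote finite generation from homology to homotopy via a functorial Postnikov argument. Since \(\dim M \geq 3\), the transversality argument recalled earlier in the paper gives \(\pi_1(\Conf_n(M)) \cong G^n\), and by the orbit-configuration-space interpretation, \(\Conf^G_n(\widetilde{M}) \to \Conf_n(M)\) is the universal cover. Hence \(\pi_i(\Conf_n(M)) \cong \pi_i(\Conf^G_n(\widetilde{M}))\) as abelian groups for \(i \geq 2\). Applying \thref{boundary_sharp} to the cover \(\widetilde{M} \to M\) equips \(\Conf^G(\widetilde{M})\) with the structure of a homotopy \(\FI_G\sharp\)-space, and since \(\pi_i\) for \(i \geq 2\) is well-defined on homotopy classes of maps between simply-connected spaces, we obtain an \(\FI_G\sharp\)-module structure on \(\pi_i(\Conf_n(M))\).

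For finite generation, \thref{confg_fgsharp} (after upgrading from field to \(\mathbb{Z}\)-coefficients as noted in the remarks following \thref{confg_fg}) yields that \(H_*(\Conf^G(\widetilde{M}); \mathbb{Z})\) is a finitely-generated \(\FI_G\sharp\)-module. The remaining task is to convert this into finite generation of each homotopy group. Classically, for a simply-connected space \(X\) of finite type, the finite generation of each \(\pi_i(X)\) is deduced from \(H_*(X;\mathbb{Z})\) by a Postnikov tower induction: kill \(\pi_i\) with a \(K(\pi_i, i)\)-fibration, identify \(\pi_i\) with a homology group of the preceding stage via Hurewicz, and track homology through each stage using the Serre spectral sequence.

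The crux of the proof is to run this Postnikov induction functorially over \(\FI_G\sharp\), and this is the principal obstacle. This is precisely what Kupers-Miller do in \cite{KM} in the \(\FI\)-case with trivial \(G\), and their argument adapts essentially verbatim: one attaches cells functorially to kill the relevant homotopy group at each stage, identifies it via Hurewicz with a finitely-generated \(\FI_G\sharp\)-module of homology, and uses Noetherianity of \(\FI_G\)-Mod (Sam-Snowden) together with \thref{sharp_decomp} to propagate finite generation through the Serre spectral sequence. Simple connectivity of \(\Conf^G_n(\widetilde{M})\) is essential here: it kills any \(\pi_1\)-action that would otherwise obstruct functoriality of the Postnikov construction over \(\FI_G\sharp\).

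Finally, the second statement is immediate: \(\pi_i(\Conf_n(M)) \otimes \mathbb{Q}\) is a finitely-generated \(\FI_G\)-module, so the \(K_0\)-stability theorem of Section 2.3 (applied to the underlying \(\FI_G\)-module, since \(G\) is virtually polycyclic) yields \(K_0\)-stability.
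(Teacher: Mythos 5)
Your proposal is correct and reconstructs exactly the argument the paper leaves implicit: identify \(\Conf^G_n(\widetilde{M})\) as the universal cover (so \(\pi_i\) transfers for \(i \ge 2\) and is well-defined on homotopy classes by simple-connectivity), invoke \thref{boundary_sharp} for the \(\FI_G\sharp\)-space structure and \thref{confg_fgsharp} for finite generation of homology, and run the Kupers--Miller Postnikov induction using Noetherianity to pass to homotopy groups. The paper states this theorem with only the pointer ``Applying \thref{confg_fgsharp}, we also obtain the following,'' so your write-up is a faithful and somewhat more explicit version of the same route, with no substantive deviation.
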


\subsection{Intermediate configuration spaces and \(\FI \times G\)}
Now suppose that \(G\) is abelian, and consider the following map:
\[\psi: G^n \rtimes S_n \to G \times S_n,\;\; \psi(\vec{g}, \sigma) = (g_1 \cdots g_n, \sigma) \]
Since \(G\) is abelian and \(\sigma\) leaves the product of all the \(g_i\)'s invariant, \(\psi\) is a homomorphism. Its kernel is the subgroup \(\ker( \vec{g} \mapsto g_1 \cdots g_n) \subset G^n\), which as a group is just isomorphic to \(G^{n-1}\). Since \(G\) is a \(\mathbb{Z}\)-module, we can identify \(\ker \psi\) as \(G \otimes_\mathbb{Z} V\), where \(V\) is the standard representation of \(S_n\) of rank \(n-1\). We obtain the following diagram of covering spaces refining \(\Conf^G_n(M) \twoheadrightarrow \UConf_n(M/G)\):
\begin{equation*}
\begin{diagram}
&&\Conf^G_n(M)&& \\
&&\dTo_{G^{n-1}}&&\\
&& \Conf'_n(M/G)&& \\
&\ldTo^{S_n} && \rdTo^{G} & \\
\UConf'_n(M/G) &&&& \Conf_n(M/G) \\
&\rdTo_{G} && \ldTo_{S_n} & \\
&&\UConf_n(M/G)&&
\end{diagram}
\end{equation*}
where \(\Conf'_n(M/G)\) and \(\UConf'_n(M/G)\) are the intermediate covers. These do not have particularly nice descriptions in general, but if \(M\) is a Lie group and \(G\) a discrete central subgroup, so that \(M/G\) is also a Lie group, then
\[ \Conf'_n(M/G) = \{(m_1 \dots m_n; m) \in (M/G)^n \times M\,|\,m_i \ne m_j\,\forall i\ne j;\, \pi(m) = m_1 \cdots m_n\} \]
and \(\UConf'_n(M/G)\) is its quotient by \(S_n\). So \(\Conf'\) parameterizes configurations of points in the base space with a chosen lift of the \emph{product} of the points. When \(M/G = \mathbb{C}^*\), these spaces are closely related to the Burau representation, as we shall see.

Define \(\FI \times G\) to be the usual product of categories, where we think of \(G\) as a category with one object. An \(\FI \times G\)-module is thus a sequence of \(G\)-representations equipped with the structure of an \(\FI\)-module, where the two actions commute. The restriction functor \(\Res: \FI \times G \to G \times S_n\) still has a left adjoint \(\Ind^{\FI \times G}: G \times S_n \to \FI \times G\) given by
\[ \Ind^{\FI \times G}(V)_{n+k} = \Ind^{S_{n+k}}_{S_n \times S_k} V \]
We can likewise define the category \(\FI\sharp \times G\). Then \(\Ind^{\FI \times G}(V)\) naturally have the structure of an \(\FI\sharp \times G\)-module, and the analogue of \thref{sharp_decomp} shows that any \(\FI\sharp \times G\)-module is a direct sum of these.

Notice that \(H^*(\Conf'_n(M/G); \mathbb{Q})\) is naturally an \(\FI \times G\)-module. Furthermore, if \(M\) satisfies the condition of \thref{boundary_sharp} then \(H_*(\Conf'_n(M/G); \mathbb{Q})\) is an \(\FI\sharp \times G\)-module.

\begin{prop}
\thlabel{prime_fg}
If \(M\) is a connected orientable manifold of dimension at least 2 with \(H^*(M; \mathbb{Q})\) of finite type, and \(G\) a finite abelian group acting freely and properly discontinuously on \(M\), then \(H^*(\Conf'(M/G); \mathbb{Q})\) is an \(\FI \times G\)-algebra of finitely-generated type.

If \(N\) is the interior of a connected compact manifold of dimension \(\ge 2\) with nonempty boundary, \(M \to N\) is a \(G\)-cover with \(G\) f.g. abelian, \(M\) is connected and orientable and \(H^*(M; \mathbb{Q})\) is a finite type \(G\)-module, then \(H_*(\Conf'(M/G); \mathbb{Q})\) is an \(\FI\sharp \times G\)-module of finitely-generated type.
\end{prop}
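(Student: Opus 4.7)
The plan is to deduce both parts from the finite-generation results \thref{confg_fg} and \thref{confg_fgsharp} for \(\Conf^G\). In each case the crucial structural fact is that, since \(G\) is abelian, the product homomorphism \(\psi: G^n \to G\) has kernel \(K_n \cong G^{n-1}\) which is normal in \(W_n\) with quotient \(W_n/K_n \cong G \times S_n\), so that the covering \(\Conf^G_n(M) \to \Conf'_n(M/G)\) has deck group \(K_n\) and \(G \times S_n\) acts globally on \(\Conf'_n(M/G)\).

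For the first part (\(G\) finite), the transfer gives a natural isomorphism of \((G \times S_n)\)-modules
\[ H^*(\Conf'_n(M/G); \mathbb{Q}) \cong H^*(\Conf^G_n(M); \mathbb{Q})^{K_n}. \]
Decomposing \(H^*(\Conf^G_n(M); \mathbb{Q})\) into \(G^n\)-isotypics (possible by abelianness of \(G\)), the \(K_n\)-invariants are exactly those isotypics whose \(G^n\)-character has the form \((\chi, \ldots, \chi)\) for some \(\chi \in \Irr(G)\). To conclude finite generation as an \(\FI \times G\)-module, I would apply the finite resolution (\ref{fin_res}) to reduce to the case of a relatively projective module \(\Ind^{\FI_G}(W_i)\) and compute \((\Ind^{\FI_G}(W_i)_n)^{K_n}\) by Mackey: since the image of \(W_i \times W_{n-i}\) under \(\psi\) is \(G \times (S_i \times S_{n-i})\), the double cosets \((W_i \times W_{n-i}) \backslash W_n / K_n\) are parameterized by \((S_i \times S_{n-i}) \backslash S_n\), and an application of Frobenius reciprocity identifies the invariants as \(\Ind^{\FI \times G}(W_i^{K_i})_n\), which is visibly a finitely generated \(\FI \times G\)-module.

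For the second part (\(G\) infinite), transfer is unavailable but the cover structure still applies. The key point is that \(\Conf'_n(M/G)\) inherits a homotopy \(\FI\sharp \times G\)-space structure from the \(\FI_G\sharp\)-structure on \(\Conf^G(M)\) constructed in \thref{boundary_sharp}: the ``add a point'' map \(\Psi^Y_X\), defined by adjoining fixed lifts of boundary-collar points in \(M\), descends to a well-defined map \(\Conf'_X(M/G) \to \Conf'_Y(M/G)\) because padding a \(K_n\)-orbit representative \((\tilde m_i)\) by the identity in the extra coordinates yields a well-defined \(K_{n+k}\)-orbit (the constraint \(\prod k_i = 1\) in \(K_n\) is preserved under padding with \(1\)'s), and this construction manifestly commutes with the \(G\)-deck action. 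Finite generation then follows by running the spectral sequence argument of \thref{confg_fgsharp} together with the analogue of \thref{arnold}, passing to \(K_n\)-coinvariants throughout: the \(E^2\)-page spanned by \(H_*(M)\)-terms and the Arnold classes \(e_{a,g,b}\) becomes an \(\FI\sharp \times G\)-module of finitely-generated type after this quotient.

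The main obstacle will be in the first part: the \(\FI \times G\)-structure on the \(K_n\)-invariants does not arise by direct restriction from the \(\FI_G\)-structure on \(H^*(\Conf^G)\) and must instead be identified via the Mackey decomposition above, whose cleanness depends essentially on the abelianness of \(G\).
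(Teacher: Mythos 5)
Your proof takes a genuinely different route from the paper's, and while the first part is largely sound modulo a small error, the second part has a real gap.

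For the finite case, the paper simply invokes transfer and leaves the resulting finite generation implicit; your Mackey analysis is a reasonable way to make this precise. However, the formula \((\Ind^{\FI_G}(W_i)_n)^{K_n} \cong \Ind^{\FI \times G}(W_i^{K_i})_n\) is not quite right. Since \(K_n \subset G^n \subset W_i \times W_{n-i}\), the relevant intersection is all of \(K_n\), and for \(n > i\) the projection \(K_n \to G^i\) onto the first \(i\) coordinates is \emph{surjective} (pick the \((i+1)\)-st coordinate to cancel the product). So the invariants of \(W_i \boxtimes k\) under \(K_n\) are \(W_i^{G^i} \boxtimes k\), not \(W_i^{K_i} \boxtimes k\); these genuinely differ when \(i \ge 2\). (One consequence worth noting: the residual \(G = G^n/K_n\)-action on these stable invariants is trivial, since any representative \((g,1,\dots,1)\) of \(g\in G\) lands in \(G^i\) or \(G^{n-i}\), both of which act trivially.) This does not kill the argument --- the invariants are still an induced \(\FI\times G\)-module --- but the statement as written is incorrect.

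The infinite case is where the real problem lies. Your plan is to descend the homotopy \(\FI_G\sharp\)-structure from \(\Conf^G\) to \(\Conf'\) and then ``run the Totaro spectral sequence passing to \(K_n\)-coinvariants throughout.'' But for \(G\) infinite, \(K_n\) is an infinite group, so \((-)_{K_n} = H_0(K_n;-)\) is not exact, and there is no reason for the \(K_n\)-coinvariants of the pages of the Totaro spectral sequence to form a spectral sequence converging to \(H_*(\Conf'_n)\). In fact the correct relationship between \(H_*(\Conf^G_n)\) and \(H_*(\Conf'_n)\) involves \emph{all} the higher group homology \(H_i(K_n; H_j(\Conf^G_n))\), not just the coinvariants. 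This is exactly what the paper uses: rather than reworking the Totaro/Arnold computation, it takes the already-established finite generation of \(H_*(\Conf^G)\) (from \thref{confg_fgsharp}) as input, and runs the Serre spectral sequence of the fibration \(\Conf^G_n(M) \to \Conf'_n(M/G) \to BG^{n-1}\), whose \(E^2\) page \(H_i(BK_n; H_j(\Conf^G_n))\) is of finitely-generated type as an \(\FI\times G\)-module term by term. That argument sidesteps both the coinvariants issue and the need to descend \(\Psi^Y_X\) at the space level --- and note also that your descent argument only addresses the covariant ``add-a-point'' direction; the contravariant forgetting map on \(\Conf^G\) does \emph{not} descend to \(\Conf'\) (dropping coordinates does not preserve the product-one condition defining \(K_n\)), so the full \(\FI\sharp\)-structure on \(\Conf'\) cannot be obtained by naive descent.
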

\begin{proof}
If \(G\) is finite this follows immediately by transfer applied to \(\Conf^G_n(M)\). However, even if \(G\) is infinite, we can still apply the Serre spectral sequence to the fibration
\[\Conf^G_n(M) \to \Conf'_n(M/G) \to BG^{n-1}\]
This spectral sequence has the form
\[ E_{i,j}^2 = H_i(BG^{n-1}; H_j(\Conf^G_n(M); \mathbb{Q})) \implies H_{i+j}(\Conf'_n(M/G); \mathbb{Q}) \]
By finite generation of \(H_j(\Conf^G_n(M); \mathbb{Q})\) as an \(\FI_G\)-module, each of the terms \(E_{i,j}^2\) is a finitely-generated \(\FI \times G\)-module, and thus the whole \(E^2\) page is of finitely-generated type. So the same argument at the end of \thref{confg_fg} thus shows that \(H_*(\Conf'_n(M/G); \mathbb{Q})\) is an \(\FI \times G\)-module of finitely-generated type.
\end{proof}
\thref{prime_fg} thus has the following consequence.
\begin{cor}
\thlabel{gstab}
Let \(M \to N\) be a \(G\)-cover with \(G\) f.g. abelian, and \(M\) a connected orientable manifold of dimension \(\ge 2\) with \(H^*(M; \mathbb{Q})\) of finite type. Suppose further that either \(G\) is finite, or \(N\) is the interior of a compact manifold with nonempty boundary. Then for any \(i \ge 0\), the sequence of \(G\)-modules \(H_i(\UConf'_n(N); \mathbb{C})\) are isomorphic for \(n \gg 0\).
\end{cor}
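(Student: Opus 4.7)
The plan is to pass to \(S_n\)-invariants via transfer and then apply the \(\FI \times G\)- (resp.\ \(\FI\sharp \times G\)-) module structure supplied by \thref{prime_fg}. Since \(S_n\) acts freely on \(\Conf'_n(M/G)\) with quotient \(\UConf'_n(N)\), transfer over \(\mathbb{C}\) gives a \(G\)-equivariant isomorphism
\[ H_i(\UConf'_n(N); \mathbb{C}) \;\cong\; H_i(\Conf'_n(M/G); \mathbb{C})^{S_n}. \]
It therefore suffices to show that the right-hand side, viewed only as a \(G\)-module, stabilizes for \(n \gg 0\).

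In the finite \(G\) case, \thref{prime_fg} provides a finitely-generated-type \(\FI \times G\)-algebra structure on \(H^*(\Conf'(M/G); \mathbb{Q})\); since \(W_n\) is finite each graded piece is finite-dimensional, so dualizing (and using that over characteristic zero \(S_n\)-invariants agree with \(S_n\)-coinvariants) it is enough to prove that \((H^i)_n^{S_n}\) stabilizes as a \(G\)-module. Because \(G\) is finite abelian, I will decompose isotypically
\[ H^i(\Conf'(M/G); \mathbb{C}) \;=\; \bigoplus_{\chi \in \Irr(G)} H^i_\chi, \]
each summand being a finitely-generated \(\FI\)-module. Representation stability for \(\FI\)-modules (\cite[Thm 1.10]{GL3}) says that the multiplicity of the trivial \(S_n\)-representation in \((H^i_\chi)_n\) is eventually constant in \(n\); summing over \(\chi\) gives the stabilization of \(H^i(\Conf'_n(M/G); \mathbb{C})^{S_n}\) as a \(G\)-module.

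In the open manifold case with \(G\) possibly infinite, \thref{prime_fg} gives directly that \(H_*(\Conf'(M/G); \mathbb{Q})\) is a finitely-generated-type \(\FI\sharp \times G\)-module. By the \(\FI\sharp \times G\)-analog of \thref{sharp_decomp} noted in \S3.4, one can write
\[ H_i(\Conf'_n(M/G); \mathbb{C}) \;\cong\; \bigoplus_{j=1}^N \Ind^{\FI \times G}(W_j)_n \]
for finitely many \((G \times S_{m_j})\)-modules \(W_j\). For \(n \ge \max_j m_j\), Frobenius reciprocity and the fact that the complementary \(S_{n-m_j}\) factor acts trivially on \(W_j\) give
\[ \Ind^{\FI \times G}(W_j)_n^{S_n} \;=\; \left(\Ind_{S_{m_j} \times S_{n-m_j}}^{S_n} W_j\right)^{S_n} \;=\; W_j^{S_{m_j}}, \]
independent of \(n\) as a \(G\)-module. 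Summing over \(j\) yields the desired stabilization.

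The main obstacle is only the bookkeeping needed to ensure the \(G\)-action is honestly tracked through the representation-stability decomposition in the finite-\(G\) case; this is handled cleanly by the isotypic splitting available precisely because \(G\) is abelian. In the open case the \(\FI\sharp\)-classification is strong enough that the \(G\)-module structure is preserved automatically.
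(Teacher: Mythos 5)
Your proof is correct and follows essentially the same route as the paper: transfer to pass between $\UConf'$ and $\Conf'$, then representation stability for the finite case and the explicit $\FI\sharp \times G$-decomposition plus Frobenius reciprocity for the open-manifold case. The one place you are more explicit than the paper is the isotypic decomposition over $\Irr(G)$ in the finite case, which the paper leaves implicit when it passes from ``finitely generated $\FI \times G$-module'' to ``finitely generated $\FI$-module'' and invokes representation stability; your version makes the stabilization of the $G$-module structure (not just the dimension) visibly clear.
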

\begin{proof}
The \(S_n\)-cover \(\Conf'_n(M/G) \to \UConf'_n(M/G)\) gives a transfer isomorphism of \(G\)-modules:
\[H_i(\UConf'_n(M/G); \mathbb{Q}) \cong \left(H_i(\Conf'_n(M/G); \mathbb{Q})\right)_{S_n}\]

If \(G\) is finite then a finitely generated \(\FI \times G\)-module is still finitely generated when just considered an \(\FI\)-module. Thus \(H_i(\Conf'_n(M/G); \mathbb{Q})\) is a finitely generated \(\FI\)-module, and representation stability tells us that \(H_i(\Conf'_n(N); \mathbb{C})_{S_n}\) stabilize when \(n \gg 0\). By the transfer isomorphism, we conclude the corollary.

On the other hand, suppose \(G\) is infinite and \(N\) is the interior of a closed manifold with boundary. Then \(H_i(\Conf'(M/G); \mathbb{Q})\) is a finitely generated \(\FI\sharp \times G\)-module by \thref{prime_fg}. Therefore
\[ H_i(\Conf'(M/G); \mathbb{Q}) = \bigoplus_j \Ind^{\FI \times G}(W_j) \]
for some finite collection \(W_1, \dots W_k\) of representations of \(S_{i_1} \times G, \dots, S_{i_k} \times G\). By Frobenius reciprocity,
\[ \left(\Ind^{\FI \times G}(W_j)_n\right)_{S_n} \cong (W_j)_{S_{i_j}} \]
Thus once \(n\) is at least the degree of generation of \(H_i(\Conf'_n(M/G); \mathbb{Q})\), the coinvariants \(H_i(\UConf'_n(M/G); \mathbb{Q})\) will always be isomorphic as \(G\)-modules. The conclusion again follows, and in this case we can explicitly say that stability occurs once we reach the degree of generation of \(H_i(\Conf'_n(M/G); \mathbb{Q})\). This is not necessarily the same as the degree of generation of \(H_i(\Conf_n(M/G; \mathbb{Q})\), since the spectral sequence argument of \thref{prime_fg} makes non-constructive use of Noetherianity.
\end{proof}

Notice that the sequence of \(G\)-modules \(H_i(\UConf'_n(M/G); \mathbb{C})\) do not have any obvious maps between them, or obvious larger category-representation structure. It is only by lifting to the cover, noticing that we have representation stability, and then descending, that we obtain actual stability of \(G\)-modules. In this respect, \thref{gstab} mirrors the proof of homological stability for unordered configuration spaces of manifolds in \cite{CEF} (this result was originally proven in \cite{Chu}). On the other hand, the \(\FI_G\)-module structure of \(H_i(\Conf^G(M))\) is crucial for the specifics of \thref{gstab}. If we forgot about the \(\FI_G\)-module structure, and merely thought of \(H^i(\Conf^G(M); \mathbb{C})\) as an FI-module, finite generation as an FI-module would be enough to show that \(\UConf'(M/G))\) satisfies homological stability \emph{in dimension} (i.e., as vector spaces), but it would not show that it satisfies homological stability \emph{as \(G\)-modules}.

Notice also that if \(G\) is infinite, then we are really getting a strong result about \(G\)-modules. We do not need to make things nicer by passing to the Grothendieck group: we get honest stability of \(G\)-modules. This despite the fact that in general we can say very little about what all the finitely-generated \(G\)-modules actually look like.

\section{Complex reflection groups}
One classical example of a finite wreath product is the \emph{complex reflection group} \(G(d,1,n)\), where \(G = \mathbb{Z}/d\mathbb{Z}\) and so \(W_n = \mathbb{Z}/d\mathbb{Z} \wr S_n\), also sometimes referred to as the \emph{full monomial group}. Because of the action of \(G\) on \(\mathbb{C}\) as multiplication by \(d\)-th roots of unity, \(W_n\) acts on \(\mathbb{C}^n\) by generalized permutation matrices whose entries are \(d\)-th roots of unity. This gives rise to a rich class of examples of \(\FI_G\)-modules, which we now describe.

Note that since \(G\) is abelian, a splitting field for \(G\) is just the same as a field containing all the character values of \(G\). So in this case there is a minimal splitting field of characteristic zero, namely the field generated by the character values, which is \(\mathbb{Q}(\zeta_d)\). 

\subsection{Orbit configuration space of complex reflection groups}
First, of course, we can consider the orbit configuration space where \(M = \mathbb{C}^*\) and \(G = \mathbb{Z}/d\mathbb{Z}\) acting, as just described, as multiplication by \(d\)-th roots of unity. Thus
\[
\Conf^G_n(\mathbb{C}^*) = \left\{(v_i) \in \mathbb{C}^n \mid v_i \ne \zeta_d^k v_j \text{ for } i \ne j, v_i \ne 0 \text{ for all } i\right\}
\]
\(\Conf^G_n(\mathbb{C}^*)\) is thus the complement of the hyperplanes fixed by the standard complex-reflection generators of \(W_n\). This arrangement, called the \emph{complex reflection arrangement}, is much studied. For instance, Bannai \cite{Ba} proved that the complement is aspherical; its fundamental group is referred to as the \emph{pure monomial braid group}, and sometimes denoted \(P(d,n)\). Thus the cohomology of \(\Conf^G_n(\mathbb{C}^*)\) is isomorphic to the group cohomology of \(P(d,n)\).

Orlik-Solomon \cite{OS} calculated the cohomology of the complement of any hyperplane arrangement, as follows. Let \(\mathcal{A}\) be a collection of linear hyperplanes in \(\mathbb{C}^n\), and put \(M(\mathcal{A}) = \mathbb{C}^n \setminus \bigcup \mathcal{A}\). Say that a subset \(\{H_1, \dots, H_p\} \subset \mathcal{A}\) is \emph{dependent} if \(H_1 \cap \dots \cap H_p = H_1 \cap \cdots \widehat{H_i} \cdots \cap H_p\); alternately, if the linear forms defining the \(H_i\) are linearly dependent. Now let
\[
E(\mathcal{A}) = \bigwedge \langle e_H \mid H \in \mathcal{A}\rangle = \bigwedge H^1(M(\mathcal{A}); \mathbb{Q})
\]
and let \(I(\mathcal{A}) \subset E(\mathcal{A})\) be the ideal
\[
I(\mathcal{A}) = \left( \sum_{i = 1}^p (-1)^{i} e_{H_1} \wedge \cdots \wedge \widehat{e_{H_i}} \wedge \cdots \wedge e_{H_p}\,\middle|\, H_1, \dots, H_p \text{ are dependent}\right).
\]
Then \(H^*(M(\mathcal{A}); \mathbb{Q}) = E(\mathcal{A})/I(\mathcal{A})\). We then obtain the following.
\begin{thm}[\bfseries Cohomology of complex reflection arrangements]
\thlabel{crg_fg}
\(H^*(\Conf^{\mathbb{Z}/d\mathbb{Z}}_n(\mathbb{C}^*); \mathbb{Q}) = H^*(P(d,1,n); \mathbb{Q})\) is a finite type \(\FI_G\sharp\)-algebra. For each \(i\), the characters of the \(W_n\)-representations \(H^i(P(d,1,n); \mathbb{Q}(\zeta_d))\) are given by a single character polynomial of degree \(2i\) for all \(n\), and therefore \(H^i(P(d,1,n; \mathbb{Q}(\zeta_d))\) satisfies representation stability with stability degree \(\le 4i\).
\end{thm}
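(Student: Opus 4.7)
My plan is to apply the \(\FI_G\sharp\)-module machinery from \S2.5 to the Orlik--Solomon presentation of the cohomology. The ambient space \(\mathbb{C}^*\) is the interior of the compact annulus \(S^1\times[-1,1]\), so by \thref{boundary_sharp}, \(\Conf^{\mathbb{Z}/d}(\mathbb{C}^*)\) is a homotopy \(\FI_G\sharp\)-space. By Orlik--Solomon, the cohomology of the complex reflection arrangement is a quotient of the exterior algebra on degree-\(1\) generators \(e_a\) (for \(1\le a\le n\), coming from \(\{v_a=0\}\)) and \(e_{a,k,b}\) (for \(1\le a<b\le n\) and \(k\in\mathbb{Z}/d\), coming from \(\{v_a=\zeta_d^k v_b\}\)). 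The partial morphisms of \(\FI_G\sharp\) act on these generators in the expected way: a partial morphism with domain \(Z\subset[n]\), injection \(f:Z\to[m]\), and \(G\)-labels \((g_i)\) sends \(e_a\mapsto e_{f(a)}\) for \(a\in Z\), sends \(e_{a,k,b}\mapsto e_{f(a),\,g_a k g_b^{-1},\,f(b)}\) for \(a,b\in Z\), and sends any generator with an index outside \(Z\) to zero. This equips \(H^*(\Conf^G(\mathbb{C}^*);\mathbb{Q})\) with the structure of an \(\FI_G\sharp\)-algebra.

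To control degrees, I observe that \(H^i\) is spanned by \(i\)-fold wedges of these degree-\(1\) generators, and each such wedge involves at most \(2i\) indices from \([n]\). Hence every such wedge lies in the \(\FI_G\)-submodule generated by the analogous wedge supported on \(\{1,\ldots,2i\}\), so \(H^i\) is generated as an \(\FI_G\)-module in degree \(\le 2i\). Since \(G\) is finite, each \(H^i(\Conf^G_n(\mathbb{C}^*);\mathbb{Q})\) is finite-dimensional, so \(H^*(\Conf^G(\mathbb{C}^*);\mathbb{Q})\) is in fact of finite type as an \(\FI_G\sharp\)-algebra.

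Finally, applying \thref{sharp_range} to the \(\FI_G\sharp\)-module \(H^i(P(d,1,n);\mathbb{Q}(\zeta_d))\) generated in degree \(\le 2i\) yields a character polynomial of degree \(\le 2i\) valid for all \(n\), together with representation stability of stability degree \(\le 2\cdot 2i=4i\), as claimed. The step I expect to require the most care is checking that the partial-morphism recipe described above genuinely realises the homotopy \(\FI_G\sharp\)-structure from \thref{boundary_sharp} at the level of cohomology, and hence defines an algebra action on the Orlik--Solomon model. Once this compatibility is recorded, the remainder of the proof is a direct bookkeeping of generators and an invocation of \thref{sharp_range}.
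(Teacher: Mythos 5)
Your proposal follows essentially the same route as the paper's proof: invoke the $\FI_G\sharp$-structure coming from $\mathbb{C}^*$ being the interior of an annulus, identify the Orlik--Solomon generators with their $W_n$-action, observe generation of $H^i$ in degree $\le 2i$, and apply \thref{sharp_range}. The only substantive presentational difference is that the paper routes the degree bound through the abstract algebra lemma \thref{alg}(2) (after first noting that $H^1$ is generated in degree $2$ by $z_1$ and the $e_{1,a,2}$), whereas you unwind that lemma and count indices on the Orlik--Solomon wedges directly; these are the same argument.
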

\begin{proof}
Since \(\mathbb{C}^*\) is the interior of an compact orientable 2-manifold with boundary, \thref{confg_fgsharp} says that \(H^*(\Conf^{\mathbb{Z}/d\mathbb{Z}}_n(\mathbb{C}^*); \mathbb{Q})\) is a finitely-generated \(\FI_G\sharp\)-module. To determine the degree this \(\FI_G\)-module is generated in, we use the Orlik-Solmon presentation. The hyperplane arrangement is:
\[\mathcal{A} = \{z_i \mid 1 \le i \le n\} \cup \{e_{i,a,j} \mid 1 \le i \ne j \le n; a \in G\}\]
where \(z_i\) is the hyperplane \(\{v_i = 0\}\) and \(e_{i,a,j}\) is the hyperplane \(\{v_i = \zeta^a v_j\}\), so that \(e_{i,a,j} = e_{j,a^{-1},i}\). To understand \(I(\mathcal{A})\), it is helpful to consider the embedding mentioned in \S3.1,
\[ \Conf^G_n(\mathbb{C}^*) \hookrightarrow \Conf_{|G|n}(\mathbb{C}),\;\; (v_1, \dots, v_n) \mapsto (v_1, \zeta v_1, \zeta^2 v_1, \dots, \zeta^{d-1} v_n) \]
This induces a map \(H^*(\Conf_{|G|n}(\mathbb{C}); \mathbb{Q}) \to H^*(\Conf^G_n(\mathbb{C}^*); \mathbb{Q})\). Orlik-Solomon likewise determines the cohomology of \(\Conf_{|G|n}(\mathbb{Q})\). The hyperplane arrangement for \(\Conf_{|G|n}(\mathbb{C})\) is
\[\mathcal{B} = \{f_{i, g; j, h} \mid 1 \le i,  j \le n;\, g, h \in G;\, (i, g) \ne (j, h)\} \]
The induced map \(H^1(\Conf_{|G|n}(\mathbb{C}); \mathbb{Q}) \to H^1(\Conf^G_n(\mathbb{C}^*); \mathbb{Q})\) is just given by
\[ f_{i,g; j, h} \mapsto \begin{cases} e_{i,h/g,j} & \text{if } i \ne j \\ z_{i} & \text{if } i = j \end{cases}\]
and is thus evidently surjective. Since \(H^*(\Conf^G_n(\mathbb{C}^*); \mathbb{Q})\) is generated by \(H^1\), this means that the total induced map \(H^*(\Conf_{|G|n}(\mathbb{C}); \mathbb{Q}) \to H^*(\Conf^G_n(\mathbb{C}^*); \mathbb{Q})\) is surjective. We can therefore describe the ideal \(I(\mathcal{A})\) in terms of the simple relations generating the ideal \(I(\mathcal{B})\) of the braid arrangement. We obtain the following presentation:
\[
H^*(M(\mathcal{A}); \mathbb{Q}) = \bigwedge\langle e_{i,a,j}, z_k \rangle \bigg/ \left\langle \begin{matrix} e_{i,a,j} \wedge e_{j,b,k} = (e_{i,a,j} - e_{j,b,k}) \wedge e_{i,ab,k} \\ z_i \wedge z_j = (z_i - z_j) \wedge e_{i,a,j} \\ e_{i,a,j} \wedge e_{i,b,j} = (e_{i,a,j} - e_{i,b,j})\wedge z_i \end{matrix} \right\rangle
\]
Another way to view these is by writing \(e_{i,a,i} = z_i\) for any \(a \ne 1 \in G\), as suggested by the induced map on \(H^1\) above: then the first relation, if \(i,j,k\) are allowed to equal one another, implies the others. As described in \S3.1, \(G^n \rtimes S_n\) acts on \(H^1\) as follows: on the \(\{z_i\}\), \(G^n\) acts trivially and \(S_n\) acts in the standard way, and on the \(\{e_{i,a,j}\}\) the action is:
\[ \left(\sigma, \zeta^{\vec{b}}\right) \cdot e_{i,a,j} = e_{\sigma(i),\,a - b_i + b_j,\,\sigma(j)} \]

So we see that \(H^*(\Conf^{\mathbb{Z}/d\mathbb{Z}}_n(\mathbb{C}^*); \mathbb{Q})\) is generated as an algebra by \(H^1(\Conf^{\mathbb{Z}/d\mathbb{Z}}_n(\mathbb{C}^*); \mathbb{Q})\), and that \(H^1\) is generated as an \(\FI_G\)-module by \(\{e_{1,a,2}\}\) and \(z_1\), and is therefore generated in degree \(2\). By \thref{alg}, \(H^i\) is generated in degree \(2i\), and so the stable range follows from \thref{sharp_range}.
\end{proof}

Wilson \cite[Thm 7.14]{Wi} proved the case \(d = 2\), for which the arrangement is the type \(BC\) braid arrangement. The decomposition for \(H^1\), following (\ref{irr_decom}), is
\[ H^1(P(d,1,n); \mathbb{Q}) = \Ind^{\FI_G}((1)_{\chi_0}) \oplus \bigoplus_{\chi \in \Irr(G)} \Ind^{\FI_G}((2)_{\chi}) \]
where \(\chi_0\) is the trivial character of \(G\). That is, \(\Ind^{\FI_G}((1)_{\chi_0})\) picks out the submodule spanned by the \(\{z_i\}\), and \(\bigoplus_{\chi \in \Irr(G)} \Ind^{\FI_G}((2)_{\chi})\) the submodule spanned by the \(\{e_{i,a,j}\}\). We can also compute the decomposition for \(H^2\):
\begin{gather*} 
H^2(P(d,1,n); \mathbb{Q}) = \Ind^{\FI_G}( (2,1)_{\chi_0}) \oplus \Ind^{\FI_G}( (3)_{\chi_0}) \oplus \bigoplus_{\chi \in \Irr(G)} \Ind^{\FI_G}( (3,1)_\chi) \oplus \Ind^{\FI_G}( (2)_\chi) \\
\oplus \bigoplus_{\substack{\chi \in \Irr(G) \\ \chi \ne \chi_0}} \Ind^{\FI_G}( (2)_\chi) \oplus \Ind^{\FI_G}((1)_{\chi_0}, (2)_\chi)^2 \oplus \Ind^{\FI_G}((1)_\chi, (1,1)_\chi) \oplus \Ind^{\FI_G}((2)_{\chi_0}, (2)_\chi)
\end{gather*}
These calculations agree with Wilson's \cite[p. 123]{Wi} for the case \(d = 2\).

\subsection{Diagonal coinvariant algebras}
Complex reflection groups also provide a generalization of the coinvariant algebra, as follows. Let \(k\) be a field with a primitive \(d\)-th root of unity \(\zeta_d\), and let \(V_n = k^n\) be the canonical representation of \(W_n\) by \(\zeta_d\)-power permutation matrices. Consider
\[
k[V_n^{\oplus r}] \cong k[x_1^{(1)}, \cdots, x_n^{(1)}, \cdots, x^{(r)}_1, \cdots x^{(r)}_n]
\]
This has a natural grading by \(r\)-tuples \((j_1, \dots, j_r)\), where \(j_i\) counts the total degree in the variables \(x_1^{(i)}, \dots, x_n^{(i)}\). There is an action of \(W_n\) induced by the diagonal action on \(V^{\oplus r}\) which respects the grading. Let \(\mathcal{I}_n = (k[V_n]^{W_n}_+)\) be the ideal generated by the constant-term-free \(W_n\)-invariant polynomials. The \emph{diagonal coinvariant algebra} is
\[\mathcal{C}^{(r)}(n) = k[V_n^{\oplus r}] / \mathcal{I}_n\]
These \(\mathcal{C}^{(r)}(n)\) arise naturally because, by the celebrated Chevalley-Shephard-Todd Theorem, complex reflection groups exactly comprise the class of groups \(G\) acting on a vector space \(V\) for which \(k[V]^G\) is a polynomial algebra. The representation theory of \(\mathcal{C}^{(r)}(n)\) has been much studied. The case \(r = 1\) is classical, going back to Chevalley's original paper \cite{Ch} proving the general case of Chevalley-Shephard-Todd.

The case \(r = 2\) for a complex reflection group (\(d > 2\)) was first studied by Haimain \cite{Ha} and Gordon \cite{Go}. The case of a general \(r\) was analyzed by Bergeron in \cite{Ber}. Bergeron shows that for a fixed \(n\), the multigraded Hilbert polynomial of \(\mathcal{C}^{(r)}(n)\) can be described independently of \(r\). (In a sense, this is orthogonal to our work, where \(r\) is fixed and \(n\) varies.) In general, very little is known about the \(\mathcal{C}^{(r)}(n)\) for \(n \ge 3\), even their dimension.
\begin{thm}[\bfseries Finite generation for diagonal coinvariant algebras]
The sequence of coinvariant algebras \(\mathcal{C}^{(r)}(n)\) form a graded co-\(\FI_{\mathbb{Z}/d\mathbb{Z}}\)-algebra of finite type.
\end{thm}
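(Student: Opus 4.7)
The plan is to verify that $\mathcal{C}^{(r)}$ carries a natural co-$\FI_{\mathbb{Z}/d\mathbb{Z}}$-algebra structure, and then to apply \thref{alg}(2) with the degree-1 part of $\mathcal{C}^{(r)}$ as a finite-type generating submodule.

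First I would define the co-$\FI_G$-structure. Given a morphism $(a, \vec{g}): [m] \hookrightarrow [n]$ in $\FI_G$ with $G = \mathbb{Z}/d\mathbb{Z}$, define the $k$-algebra homomorphism $\phi_{(a,\vec{g})}: k[V_n^{\oplus r}] \to k[V_m^{\oplus r}]$ by
\[ x_{a(i)}^{(s)} \mapsto g_i \cdot x_i^{(s)} \text{ for } i \in [m], \qquad x_j^{(s)} \mapsto 0 \text{ for } j \notin a([m]), \]
where $g_i \in \mathbb{Z}/d\mathbb{Z}$ acts on $x_i^{(s)}$ by multiplication by the corresponding $d$-th root of unity. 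The key nontrivial step is to verify that this descends to a well-defined map on the coinvariant quotient, i.e., that $\phi_{(a,\vec{g})}(\mathcal{I}_n) \subseteq \mathcal{I}_m$. This amounts to showing that $\phi_{(a,\vec{g})}$ sends constant-term-free $W_n$-invariants to constant-term-free $W_m$-invariants: constant-term-freeness is immediate since every variable is sent to a monomial of degree $\le 1$; the $(\mathbb{Z}/d\mathbb{Z})^m$-invariance of the image follows because each surviving variable has already been individually scaled by $G$ (so any $G^m$-twist on the target can be reabsorbed via $G^n$-invariance of the source); and the $S_m$-invariance of the image follows from the $S_n$-invariance restricted to permutations of $a([m])$. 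Functoriality in $(a,\vec{g})$ is then a routine bookkeeping check.

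Second I would apply \thref{alg}(2). The graded co-$\FI_G$-algebra $\mathcal{C}^{(r)}$ is generated (as a $k$-algebra at each level $n$) by its degree-$1$ submodule $V = \bigoplus_{s=1}^r V^{(s)}$, where $V^{(s)}_n = \Span_k\{x_i^{(s)} : 1 \le i \le n\}$ is the standard $W_n$-representation sitting in multidegree $e_s$. Each $V^{(s)}$ is finite-dimensional at each level, is generated as a co-$\FI_G$-module in degree $1$, and satisfies $V^0 = 0$; hence $V$ is of finite type. Applying \thref{alg}(2) yields that $\mathcal{C}^{(r)}$ is of finite type as a co-$\FI_G$-algebra.

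The hard part will be the first step: showing that the twisted projection $\phi_{(a,\vec{g})}$ descends to the coinvariant quotient, which is the only place where any content beyond the general $\FI_G$-machinery enters. Once this well-definedness is established, the finite-type conclusion is a formal consequence of \thref{alg}(2) and the triviality of the generating submodule's structure.
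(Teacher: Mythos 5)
Your proposal is correct and takes essentially the same approach as the paper. The paper applies \thref{alg}(2) to the polynomial algebra $k[V^{\oplus r}]$ and then observes that $\mathcal{C}^{(r)}$ is a co-$\FI_G$-quotient (asserting that the $\mathcal{I}_n$ form a co-$\FI_G$-ideal), whereas you verify the descent of the co-$\FI_G$-structure to the quotient directly and then apply \thref{alg}(2) to $\mathcal{C}^{(r)}$ itself; the substantive content (the ideals are stable under the $W_n$-action and under the restriction maps $\iota_n^*$, which together with functoriality handle all $\FI_G$-morphisms) is the same, and your more explicit check of this is the part the paper leaves implicit.
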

\cite[Thm 1.11]{CEF} proved the case \(d = 1\), and Wilson \cite[Thm 7.8]{Wi} proved it for \(d = 2\).
\begin{proof}
Let \(V\) be the \(\FI_G\)-module associated to the \(n\)-dimensional \(W_n\)-representations \(V_n = k^n\), where \(G\) acts by multiplication by \(\zeta_d\). Let \(\chi_1\) be the character of \(\mathbb{Z}/d\mathbb{Z}\) sending 1 to \(\zeta_d\), and let \(\underline{\lambda}\) be the \(G\)-partition of 1 supported on \(\chi_1\), so \(\underline{\lambda}(\chi_1) = (1)\). Then \(V_n = L(\underline{\lambda})_n \oplus L(\underline{\lambda})\). Thus \(k[V^{\oplus r}]\) is naturally a co-\(\FI_G\)-module of finite type. The ideals \(\mathcal{I}_n\) together form a co-\(\FI_G\)-ideal \(\mathcal{I}\), determined by the \(W_n\) action and the maps
\begin{align*}
(\iota_n)^*: \mathcal{I}_{n+1} &\to \mathcal{I}_n \\
x_i &\mapsto \begin{cases} x_i & i \le n \\ 0 & i = n+1 \end{cases}
\end{align*}
Thus \(\mathcal{C}^{(r)}\) is a co-\(\FI_G\)-quotient of a co-\(\FI_G\)-algebra of finite type, so it is a co-\(\FI_G\)-algebra of finite type. Since \(W_n\) preserves the grading of \(\mathcal{C}^{(r)}(n)\), then as a co-\(\FI_G\)-module, \(\mathcal{C}^{(r)}\) decomposes into graded pieces \(\mathcal{C}^{(r)}_J\).
\end{proof}
By Noetherianity, each graded piece \(\mathcal{C}^{(r)}_J\) is a finitely-generated co-\(\FI_G\)-module by \thref{alg}. So we obtain the following.
\begin{cor}
For each multi-index \(J\), the characters of the \(W_n\)-representations \(\mathcal{C}^{(r)}_J(n)\) are given by a single character polynomial for all \(n \gg 0\). 
\end{cor}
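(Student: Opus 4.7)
The plan is to deduce this from \thref{charpoly} by passing to duals. The preceding theorem establishes that \(\mathcal{C}^{(r)}\) is a co-\(\FI_G\)-algebra of finite type, so each graded piece \(\mathcal{C}^{(r)}_J\) is a finitely-generated co-\(\FI_G\)-module with each \(\mathcal{C}^{(r)}_J(n)\) finite-dimensional (this last using that \(G = \mathbb{Z}/d\mathbb{Z}\) is finite). Pointwise dualization then produces an \(\FI_G\)-module \(V := (\mathcal{C}^{(r)}_J)^*\): the contravariant structure maps \(\mathcal{C}^{(r)}_J(n+1) \to \mathcal{C}^{(r)}_J(n)\) dualize to covariant maps \(V_n \to V_{n+1}\), and a finite generating set of the co-\(\FI_G\)-module dualizes, via finite-dimensionality of each piece, to a finite generating set of the \(\FI_G\)-module \(V\).

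Next, apply \thref{charpoly} to \(V\) to obtain a character polynomial \(P \in k[\{X_i^C\}]\) with \(\chi_{V_n}(w) = P(w)\) for all \(n \gg 0\). Since \(\chi_{\mathcal{C}^{(r)}_J(n)}(w) = \chi_{V_n^*}(w) = \chi_{V_n}(w^{-1})\), it remains to express \(P(w^{-1})\) as a character polynomial in \(w\). For \(w \in W_n\), inverting \(w\) preserves cycle lengths but replaces a \(C\)-labeled \(i\)-cycle (where \(C\) is the conjugacy class in \(G\) of the product of the \(G\)-labels traversed around the cycle) by a \(C^{-1}\)-labeled \(i\)-cycle, so that \(X_i^C(w^{-1}) = X_i^{C^{-1}}(w)\). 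Substituting \(X_i^{C^{-1}}\) for \(X_i^C\) throughout \(P\) yields the desired character polynomial for \(\mathcal{C}^{(r)}_J\), valid for all \(n \gg 0\).

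The argument is essentially formal, and the only step requiring genuine care is the dualization: one must verify that the co-\(\FI_G\) to \(\FI_G\) duality sends finitely-generated co-\(\FI_G\)-modules of finite type to finitely-generated \(\FI_G\)-modules with controlled degree of generation. In the finite-\(G\), pointwise-finite-dimensional setting this is straightforward, since the generating set of \(V\) can be taken to be dual to the (finite) generating set of \(\mathcal{C}^{(r)}_J\) and lives in the same finite range of degrees, so that the bound \(n \ge r + \min(m,r)\) of \thref{charpoly} translates directly.
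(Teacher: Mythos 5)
Your proof is correct and fills in the details that the paper elides: the paper simply asserts the corollary after establishing that each \(\mathcal{C}^{(r)}_J\) is a finitely-generated co-\(\FI_G\)-module, and the intended argument is exactly the pointwise dualization you describe, followed by \thref{charpoly} applied to the dual \(\FI_G\)-module. Your observation that \(\chi_{V^*_n}(w) = \chi_{V_n}(w^{-1})\) and that \(X_i^C(w^{-1}) = X_i^{C^{-1}}(w)\), so the character polynomial ring is closed under the substitution needed to dualize, is the right way to make the final translation precise.
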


\subsection{Lie algebras associated to the lower central series}
For any group \(\Gamma\), recall that the \emph{lower central series} is defined inductively by \(\Gamma_1 = \Gamma\) and \(\Gamma_{k+1} = [\Gamma_k, \Gamma]\). The \emph{associated graded Lie algebra} \(\gr(\Gamma)\) over a field \(k\) is defined as
\[
\gr(\Gamma) = \bigoplus_{k \ge 1} (\Gamma_{k+1}/\Gamma_k) \otimes_{\mathbb{Z}} k
\]
where the Lie algebra structure is induced by the commutator. For example, if \(\Gamma\) is a free group of rank \(n\), then \(\gr(\Gamma)\) is the free lie algebra of rank \(n\). Since the higher \(\Gamma_i\) are generated by iterated commutators, \(\gr(\Gamma)\) is always generated as a Lie algebra by \(\gr(\Gamma)_1 = H_1(\Gamma;k)\).

Now, as \cite{CEF} carefully note, \(\pi_1\) is not a functor from topological spaces to groups, since it depends on a basepoint, and a map of spaces only determines a group homomorphism up to conjugacy. However, since conjugation acts trivially on the associated Lie algebra, there is a functor \(Z \mapsto \gr \pi_1(Z)\) from the category Top of topological spaces to the category of graded Lie algebras. Thus \(\gr \pi_1(\Conf^G(M))\) has the structure of a graded co-\(\FI_G\)-algebra. 

Here we continue to look at the case \(M = \mathbb{C}^*\), \(G = \mathbb{Z}/d\), so \(\pi_1 = P(d,n)\). The associated graded Lie algebra \(\gr P(d,n)\) has been studied, for example, by D. Cohen in \cite{Co}, who obtained some partial results, but still, little is known about it.
\begin{thm}[\bfseries Finite generation for \(\gr P(d,n)\)]
For each \(i\), the characters of the \(W_n\)-representations \(\gr P(d,n)^i\) are given by a single character polynomial for all \(n\).
\end{thm}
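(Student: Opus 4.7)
The plan is to recognize $\gr P(d,n)$ as a graded $\FI_G\sharp$-Lie algebra generated in low degree, and then apply Theorem \thref{alg}.2 together with Corollary \thref{sharp_range}. Note that using the $\FI_G\sharp$ structure (rather than merely $\FI_G$) is what will yield character polynomials valid for \emph{all} $n$, as asserted in the theorem.

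First, since $\mathbb{C}^* \simeq \mathbb{R} \times S^1$ is the interior of the compact $2$-manifold $[0,1] \times S^1$ with nonempty boundary, Theorem \thref{boundary_sharp} gives $\Conf^G(\mathbb{C}^*)$ (for $G = \mathbb{Z}/d\mathbb{Z}$) the structure of a homotopy $\FI_G\sharp$-space. Composing with the functor $Z \mapsto \gr \pi_1(Z)$ from $\mathrm{hTop}$ to graded Lie algebras, which is well-defined despite the basepoint ambiguity of $\pi_1$ because conjugation acts trivially on the associated graded, this promotes $\gr P(d,n) = \gr \pi_1(\Conf^G_n(\mathbb{C}^*))$ to a graded $\FI_G\sharp$-Lie algebra.

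Next I would identify the degree-$1$ piece: by Hurewicz, $\gr P(d,n)^1 = P(d,n)^{ab} \otimes k = H_1(P(d,n); k)$. By Theorem \thref{crg_fg}, $H^1(P(d,n); \mathbb{Q}(\zeta_d))$ is an $\FI_G\sharp$-module of finite type generated in degree $2$ (by $\{e_{1,a,2}\}$ and $z_1$); the homology $H_1$ then enjoys the same description, either by dualizing or by running the same Orlik--Solomon argument on the cosheaf side as in \thref{confg_fgsharp}. Since iterated commutators of $\gr P(d,n)^1$ span all of $\gr P(d,n)$, the graded $\FI_G\sharp$-Lie algebra $\gr P(d,n)$ is generated by the submodule $V$ concentrated in grading $1$ with $V^1 = \gr P(d,n)^1$, whose $\FI_G\sharp$-generation degree is $2$.

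Applying Theorem \thref{alg}.2 with $m = 2$, each $\gr P(d,n)^i$ is then an $\FI_G\sharp$-module of finite type generated in $\FI_G\sharp$-degree $2i$, and Corollary \thref{sharp_range} immediately produces a character polynomial of degree $\le 2i$ valid for all $n$. The main subtlety to verify is that the $\FI_G\sharp$-structure of \thref{boundary_sharp} does descend cleanly through to $\gr P(d,n)$: what makes this work is precisely that $\gr \pi_1$ is a genuine functor on $\mathrm{hTop}$, even though $\pi_1$ itself is only well-defined up to inner automorphism, a discrepancy which is invisible on the associated graded.
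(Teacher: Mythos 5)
Your proposal is correct and follows essentially the same route as the paper: use Theorem \thref{boundary_sharp} to endow $\Conf^{G}(\mathbb{C}^*)$ with its homotopy $\FI_G\sharp$-structure, pass through the functor $\gr\pi_1$ on $\mathrm{hTop}$, identify $\gr^1 = H_1(P(d,n))$ as a finitely generated $\FI_G\sharp$-module in degree $2$ via Theorem \thref{crg_fg}, and then apply Theorem \thref{alg}.2 and Corollary \thref{sharp_range}. The paper's proof is terser (it invokes Noetherianity where you instead argue directly that $\FI_G$-generation degree bounds $\FI_G\sharp$-generation degree), but the underlying argument and the resulting bound (degree $\le 2i$, valid for all $n$) are the same.
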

\begin{proof}
\(H_1(\Conf^G(\mathbb{C}^*)\) is a finitely generated \(\FI_G\sharp\)-module by \thref{crg_fg}. Since \(H_1(\Gamma)\) generates \(\gr(\Gamma)\) as a Lie algebra, this means that \(\gr P(d,n)\) is an \(\FI_G\sharp\)-algebra of finite type by \thref{alg}. Thus by Noetherianity, its graded pieces are finitely generated \(\FI_G\sharp\)-modules.
\end{proof}

\cite[Thm 7.3.4]{CEF} proved the case \(d = 1\), for which \(P(1,n) = P_n\), the pure braid group.

\section{Affine arrangements}
A motivating example of an \(\FI_G\)-module for \(G\) infinite, and one with interesting applications to affine braid groups, is the analogue of \(\Conf^{\mathbb{Z}/d}(\mathbb{C}^*)\) when we ``let \(d\) go to infinity'' and replace \(\mathbb{Z}/d\) with the infinite group \(\mathbb{Z}\).
Whereas the \(\mathbb{Z}/d\)-cover of \(\mathbb{C}^*\) is just homeomorphic to \(\mathbb{C}^*\) again, the \(\mathbb{Z}\)-cover of \(\mathbb{C}^*\) is homeomorphic to \(\mathbb{C}\). To be specific, we take \(M = \mathbb{C}\) and \(G = \langle \tau \rangle \cong \mathbb{Z}\) acting as translation by some \(\tau \in \mathbb{C}\). Thus \(M/G\) is homeomorphic to a cyclinder, which is homeomorphic to the punctured plane, and the quotient map is \(\exp(x/2\pi i \tau): \mathbb{C} \to \mathbb{C}^*\).

As in the case of complex reflection groups, \(\Conf_n^G(M)\) is the complement of the hyperplane arrangement
\[ \mathcal{A}_n = \{(z_i) \in \mathbb{C}^n \mid z_i - z_j \not\in \tau\mathbb{Z}\}\]
so one might hope that Orlik-Solomon gives the homology or cohomology. Unfortunately, \(\mathcal{A}_n\) is an infinite arrangement, and their arguments hinge on inducting on the number of hyperplanes (by looking at the ``deleted'' and ``restricted'' arrangements), so their theorem is not directly applicable.

However, a result of Cohen-Xicot\'encatl \cite{CX} provides a solution. They compute the homology and cohomology of a similar space---that of \(\Conf^{\mathbb{Z}^2}(\mathbb{C})\), mentioned earlier, so that the quotient \(\mathbb{C}/\mathbb{Z}^2\) is a torus---using a Fadell-Neuwirth type argument, that inducts on the dimension of the space, rather than the number of hyperplanes. 

In fact, the argument they give proves the following general result. Following Falk-Randell \cite{FR}, we first make the following definitions. Let \(\mathcal{A}\) be an arrangement of hyperplanes in \(\mathbb{C}^n\), and put \(M = \mathbb{C}^n \setminus \bigcup \mathcal{A}\). Say that \(M\) is \emph{strictly linearly fibered} if after a suitable linear change of coordinates the restriction to the first \(n-1\) coordinates is a fiber bundle projection with base \(N\) the complement of an arrangement in \(\mathbb{C}^{n-1}\) and fiber a complex line with a discrete set of points removed. Now define a \emph{fiber-type} arrangement inductively as follows: \begin{itemize}
\item The 1-arrangement \(\{0\} \subset \mathbb{C}\) is \emph{fiber-type}
\item An \(n\)-arrangement for \(n \ge 2\) is \emph{fiber-type} if the complement \(M\) is strictly linearly fibered over \(N\), and \(N\) is the complement of an \((l-1)\)-arrangement of fiber type.
\end{itemize}
Thus the complement \(M\) of a fiber-type \(n\)-arrangement sits atop a tower of fibrations
\begin{equation} \label{fiber-type}
M = M_n \xrightarrow{p_n} M_{n-1} \xrightarrow{p_{n-1}} \cdots \to M_2 \xrightarrow{p_2} M_1 = \mathbb{C}^*
\end{equation}
with the fiber \(F_k\) of \(p_k\) homeomorphic to \(\mathbb{C}\) with a discrete set of points removed.

In particular, Falk-Randell use the tower of fibrations (\ref{fiber-type}) to prove that \(M\) is aspherical, that each bundle map \(p_k\) has a section, and that \(\pi_1(M_{k-1})\) acts trivially on \(H_i(F_k)\).

\begin{prop}[{\cite[Prop 6]{CX}}]
Let \(\mathcal{A}\) be an arrangement of hyperplanes in \(\mathbb{C}^n\) (possibly countably infinite and affine) which is discrete in the space of all hyperplanes. Put \(M = \mathbb{C}^n \setminus \bigcup \mathcal{A}\), and suppose \(\mathcal{A}\) is fiber-type.

Then \(H^*(M; \mathbb{Q})\) contains a dense subalgebra \(A\) generated by classes \(E_{H} \in H^1(M; \mathbb{Q})\) for each \(H \in \mathcal{A}\), which is isomorphic to the Orlik-Solomon algebra of \(\mathcal{A}\). Moreover, there is an isomorphism \(H_i(M; \mathbb{Q}) \cong A^i\).
\end{prop}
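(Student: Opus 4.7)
The plan is to prove this by induction on \(n\), exploiting the fiber-type tower (\ref{fiber-type}) exactly as in Falk-Randell and Cohen-Xicoténcatl. The base case \(n=1\) is the arrangement of a (possibly countably infinite) discrete set \(S_1 \subset \mathbb{C}\), with complement \(F_1 = \mathbb{C} \setminus S_1\) homotopy equivalent to a wedge \(\bigvee_{s \in S_1} S^1\). Then \(H_1(F_1;\mathbb{Q}) = \bigoplus_{s} \mathbb{Q}\), with basis the small loops \(\gamma_s\), and \(H^1(F_1;\mathbb{Q}) = \prod_s \mathbb{Q}\) with dual basis \(E_s\). The subalgebra \(A\) spanned by the \(E_s\) is exactly the degree-1 part of the Orlik-Solomon algebra (the OS relations being trivially satisfied in degree 1), and it is dense in the product topology. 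The homology identification is immediate.

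For the inductive step I would analyze the Leray-Serre spectral sequence of \(p_n: M_n \to M_{n-1}\), whose fiber is \(F_n \simeq \mathbb{C} \setminus S_n\) for a discrete set \(S_n\). By Falk-Randell the monodromy is trivial, so
\[E_2^{p,q} = H^p(M_{n-1}; \mathbb{Q}) \otimes H^q(F_n; \mathbb{Q}),\]
and the existence of a section \(s_n\) of \(p_n\) implies that the edge map \(H^*(M_n) \to H^*(F_n)\) is surjective; combined with triviality of monodromy this forces collapse at \(E_2\). Hence
\[H^*(M_n; \mathbb{Q}) \cong H^*(M_{n-1}; \mathbb{Q}) \otimes H^*(F_n; \mathbb{Q})\]
as graded vector spaces, and the corresponding Künneth isomorphism holds for homology (with genuine direct sums in each factor).

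Next I would exhibit the OS generators. For each \(H \in \mathcal{A}\), either \(p_n(H)\) is a hyperplane \(H' \in \mathcal{A}_{n-1}\), in which case I set \(E_H = p_n^* E_{H'}\), or \(H\) is ``vertical'' (meets a generic fiber in a single point of \(S_n\)), in which case I define \(E_H\) as the pullback along a retraction of the loop-dual class on that fiber, well-defined by triviality of the monodromy. The subalgebra \(A \subset H^*(M_n;\mathbb{Q})\) generated by these classes contains, by the above splitting, all finite sums of tensors \(\alpha \otimes \beta\) with \(\alpha\) in the inductively constructed subalgebra of \(H^*(M_{n-1})\) and \(\beta\) a finite product of fiber generators; this is dense in the product topology because each of the two factors is inductively dense. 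The OS relations hold in \(A\): every OS relation involves only finitely many hyperplanes, so it reduces, via functoriality of pullback under the inclusion \(M_n \hookrightarrow M_n^{\mathrm{fin}}\) from the finite subarrangement spanned by those hyperplanes, to the classical Orlik-Solomon relations on a finite complement. No further relations hold because the induction gives an independent spanning set on each \(E_2\)-term. The homology identification \(H_i(M_n;\mathbb{Q}) \cong A^i\) follows by dualizing: small loops around hyperplanes span \(H_1\) as a direct sum (not a product), and all higher homology is generated by their Pontryagin-type products under Künneth.

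The main obstacle is the bookkeeping at infinity. At each level of the tower \(H^*(F_k;\mathbb{Q})\) is an uncountable direct product, while the OS subalgebra consists only of finite combinations, so one must be careful to verify that (i) the OS algebra embeds rather than just maps into cohomology, and (ii) density really holds in the appropriate (product) topology after iterating the fibration an arbitrary but finite number of times. Both are handled by exhausting \(\mathcal{A}\) by finite subarrangements \(\mathcal{A}_\alpha\) and passing to the colimit: the classical Orlik-Solomon theorem gives \(H^*(M_\alpha;\mathbb{Q}) \cong E(\mathcal{A}_\alpha)/I(\mathcal{A}_\alpha)\) for each \(\alpha\), and the functoriality of restriction \(H^*(M_\alpha) \to H^*(M)\) together with the spectral sequence above shows that \(A = \operatorname{colim}_\alpha H^*(M_\alpha;\mathbb{Q})\) sits densely in \(H^*(M;\mathbb{Q}) \cong \prod\) and agrees with \(H_*(M;\mathbb{Q})\) as a graded vector space, which completes the argument.
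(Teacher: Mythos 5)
The paper does not actually prove this proposition itself---it is cited verbatim from Cohen-Xicot\'encatl \cite{CX}, and the surrounding text only remarks that their argument is a Fadell-Neuwirth style induction on the tower of fibrations. Your proposal follows that same route, so you are on the right track in spirit, but there is a genuine logical gap in the inductive step.

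You write that ``the existence of a section $s_n$ of $p_n$ implies that the edge map $H^*(M_n) \to H^*(F_n)$ is surjective.'' This is not correct. A section of $p_n$ gives a left inverse for $p_n^*$, hence \emph{injectivity} of $p_n^*: H^*(M_{n-1}) \to H^*(M_n)$ (equivalently, no nonzero differentials have target in the base row), but it says nothing about surjectivity of restriction to the fiber. Collapse at $E_2$ requires the Leray-Hirsch condition: one must exhibit classes in $H^*(M_n)$ whose restrictions to $F_n$ span $H^*(F_n)$. In a fiber-type arrangement these are precisely the classes $E_H$ for the ``vertical'' hyperplanes $H$ (those meeting the generic fiber in a point), whose loop-duals span $H^1(F_n)$; since $F_n$ is a wedge of circles, $H^*(F_n)$ is generated in degree one, so spanning $H^1(F_n)$ suffices. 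Your argument constructs these $E_H$'s only \emph{after} having already invoked the collapse, which makes the logic circular. The fix is to reverse the order: first build the vertical generators by lifting (using trivial monodromy to make the lift well-defined), then cite Leray-Hirsch to obtain the K\"unneth-type splitting, and only then deduce the section statement from $p_n^*$ being split injective. A secondary loose end: your exhaustion argument asserts that $\operatorname{colim}_\alpha H^*(M_\alpha;\mathbb{Q})$ injects into $H^*(M;\mathbb{Q})$, but injectivity of the colimit map is not automatic; it is in fact a consequence of the independence of OS monomials that the K\"unneth splitting provides, so it should be presented as such rather than as an independent step.
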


Cohen-Xicot\'encatl use the tower of fibrations (\ref{fiber-type}) to obtain this result. In a sense, this result was basically implicit in Falk-Randell \cite{FR}, but there the arrangements were always assumed to be finite.

The algebra \(A\) is the algebra one gets by following the recipe of the Orlik-Solomon algebra of \(\mathcal{A}\), even though \(\mathcal{A}\) happens to be infinite, with each generator \(E_H\) coming from monodromy around the hyperplane \(H\). So the situation is really as good as one could hope for.

\(\Conf^\mathbb{Z}(\mathbb{C})\) is closely related to affine Coxeter and Artin groups. The affine Coxeter group \(W_{\widetilde{A}_n}\) is the one associated to the Coxeter diagram \(\widetilde{A}_n\), which is obtained by adding one extra node connecting the first and last nodes to the Coxeter diagram \(A_n\). Concretely, let \(V^\mathbb{R} = \{(x_i) \in \mathbb{R}^n \mid \sum x_i = 0\}\). Then \(W_{\widetilde{A}_{n-1}}\) is the group generated by reflections in \(V^\mathbb{R}\) through the hyperplanes \(H^\mathbb{R}_s = \{x_i - x_j = n \mid n \in \mathbb{Z}\}\). This group \(W_{\widetilde{A}_{n-1}}\) is isomorphic to \(\mathbb{Z}^{n-1} \rtimes S_n\). Let \(V\) be the complexification of \(V^\mathbb{R}\), and let \(V_0 = V \setminus \bigcup H_s\) the hyperplane complement. Then \(\pi_1(V_0)\) is the pure Artin braid group \(P_{\widetilde{A}_{n-1}}\), the group \(W_{\widetilde{A}_{n-1}}\) acts freely on \(V_0\) and \(\pi_1(V_0/W_{\widetilde{A}_{n-1}})\) is the Artin braid group \(B_{\widetilde{A}_{n-1}}\). But notice that \(V_0\) is just the fiber of the bundle \(\Conf^\mathbb{Z}(\mathbb{C}) \to \mathbb{C}\) given by taking the sum of the coordinates, which is trivial since \(\mathbb{C}\) is contractible, so that \(V_0 \simeq \Conf^\mathbb{Z}(\mathbb{C})\). In fact, it has been proven \cite{Ok} that \(V_0\) is aspherical, and therefore \(\Conf^\mathbb{Z}(\mathbb{C})\) is a \(K(P_{\widetilde{A}_{n-1}}, 1)\).

We then obtain the following.

\begin{thm}[\bfseries Homology of type \(\widetilde{A}_n\) pure braid group]
\(H_*(\Conf^{\mathbb{Z}}(\mathbb{C}); \mathbb{Q}) = H_*(P_{\widetilde{A}_{n-1}}; \mathbb{Q})\) is a finitely-generated type \(\FI_G \sharp\)-module. For each \(i\), the \(W_n\)-representations \(H_i(P_{\widetilde{A}_{n-1}}; \mathbb{Q})\) satisfies \(K_0\)-stability with stability degree \(\le 4i\).
\end{thm}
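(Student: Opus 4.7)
The plan is to derive both statements as direct consequences of \thref{confg_fgsharp} and \thref{sharp_decomp}, with the degree of generation supplied by the Cohen-Xicot\'encatl Orlik-Solomon-type presentation (Proposition 4.1). First, take \(M = \mathbb{C}\) and \(G = \mathbb{Z}\) acting by translation; then \(M/G\) is a cylinder, which is the interior of the compact annulus \(S^1 \times [0,1]\), so the hypotheses of \thref{boundary_sharp} are satisfied and \(\Conf^\mathbb{Z}(\mathbb{C})\) is a homotopy \(\FI_\mathbb{Z}\sharp\)-space. Since \(\mathbb{Z}\) is virtually polycyclic, \(M = \mathbb{C}\) is connected and orientable, and \(H^*(\mathbb{C}; \mathbb{Q})\) is trivially of finite type, \thref{confg_fgsharp} gives that \(H_*(\Conf^\mathbb{Z}(\mathbb{C}); \mathbb{Q})\) is a finitely-generated type \(\FI_\mathbb{Z}\sharp\)-module, which is the first half of the theorem.

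The next step is to bound the degree of generation of \(H_i\). The affine arrangement
\[ \mathcal{A}_n = \{z_i - z_j = k\tau \mid 1 \le i < j \le n, \; k \in \mathbb{Z}\}\]
is fiber-type in the sense of Falk-Randell: projecting onto the first \(n-1\) coordinates exhibits the complement as a fiber bundle over the complement of \(\mathcal{A}_{n-1}\), with fiber a copy of \(\mathbb{C}\) with a discrete set of points removed, and we induct down. Proposition 4.1 then identifies \(H_*(\Conf^\mathbb{Z}(\mathbb{C}); \mathbb{Q})\) with the Orlik-Solomon algebra of \(\mathcal{A}_n\), generated as a \(\mathbb{Q}\)-algebra in degree \(1\) by classes \(E_{i,k,j}\) for \(1 \le i < j \le n\) and \(k \in \mathbb{Z}\). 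As in the discussion following \thref{confg_fg}, the \(\FI_\mathbb{Z}\sharp\)-module spanned by these classes is \(\Ind^{\FI_\mathbb{Z}}(k[\mathbb{Z}])\) and is generated in degree \(2\) (from the single element \(E_{1,0,2}\), with the \(W_2\)-orbit sweeping out the remaining generators via equation (\ref{action})). By \thref{alg}.2, the graded piece \(H_i\) is therefore generated in degree \(\le 2i\) as an \(\FI_\mathbb{Z}\sharp\)-module.

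To conclude \(K_0\)-stability with the advertised bound, apply \thref{sharp_decomp} to write \(H_i = \bigoplus_j \Ind^{\FI_\mathbb{Z}}(W_j)\), where each \(W_j\) is a \(W_{m_j}\)-representation with \(m_j \le 2i\). As noted in the proof of Theorem 2.5, the Pieri-rule argument of \cite[Thm 1.10]{GL3} applies verbatim to show that each summand \(\Ind^{\FI_\mathbb{Z}}(W_j)\) satisfies \(K_0\)-stability with stability degree at most \(2 m_j \le 4i\), and this bound passes to the finite direct sum.

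The main obstacle will be the second step, specifically checking that the infinite affine arrangement \(\mathcal{A}_n\) genuinely satisfies the fiber-type hypothesis of Proposition 4.1 (the Falk-Randell induction needs some care, since the original sources assume finiteness), and verifying that the Orlik-Solomon classes \(E_{i,k,j}\) are permuted by the \(\FI_\mathbb{Z}\sharp\)-structure of \thref{boundary_sharp} exactly by the formula (\ref{action}). Once this identification is in hand, the surrounding argument is entirely parallel to \thref{crg_fg}.
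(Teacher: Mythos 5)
Your proposal is correct and follows essentially the same route as the paper: apply \thref{confg_fgsharp} to get finite generation of $H_*$, use the Cohen--Xicot\'encatl Orlik--Solomon presentation (Proposition 5.1 of the paper, not 4.1) to identify $H_1$ with $\Ind^{\FI_\mathbb{Z}}(k[\mathbb{Z}])$ generated in degree $2$, apply \thref{alg}.2 to get degree-$2i$ generation for $H_i$, and conclude $K_0$-stability with stability degree $\le 4i$. The one place you are slightly more careful than the paper: the paper simply cites its Theorem 2.5 for the $4i$ bound, but as stated that theorem does not give an explicit stability degree; your unpacking via \thref{sharp_decomp} together with the Pieri-rule argument of \cite[Thm 1.10]{GL3} is what actually produces the $2m_j \le 4i$ bound, and is the correct justification. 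The concerns you flag in your final paragraph (the fiber-type check for the infinite affine arrangement, and the compatibility of the $\FI_\mathbb{Z}\sharp$-action with the Orlik--Solomon generators) are exactly the points the paper delegates to Cohen--Xicot\'encatl and to the discussion following \thref{confg_fg}, so they are real but handled.
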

\begin{proof}
By \thref{confg_fgsharp}, \(H_*(\Conf^\mathbb{Z}(\mathbb{C}); \mathbb{Q})\) is a finitely-generated type \(\FI_G\sharp\)-module, while \(H^*(\Conf^\mathbb{Z}(\mathbb{C}); \mathbb{Q})\) is an \(\FI_G\sharp\) algebra not of finitely-generated type.

Taking the usual coordinates on \(\mathbb{C}^n\), we see that \(\Conf^\mathbb{Z}(\mathbb{C})\) is a fiber-type arrangement. Proposition 5.1 therefore applies, and we obtain a presentation for \(H_*(\Conf^\mathbb{Z}(\mathbb{C}); \mathbb{Q})\).

Recall \cite{OT} that for affine arrangements, the Orlik-Solomon ideal is generated not just by \(\partial e_S\) for \(S\) dependent, but also by \(e_S\) when \(\cap S = \emptyset\) (possible for non-central arrangements), which is derived by considering the homogeneized arrangement. In the case of \(\Conf^\mathbb{Z}(\mathbb{C})\), \(\mathcal{A}\) consists of the affine hyperplanes \(H_{i,a,j} = \{z_i - z_j = a \mid a \in \mathbb{Z}\}\), and thus
\[
A = \bigwedge\langle e_{i,a,j} \rangle \bigg/ \left\langle \begin{matrix} e_{i,a,j} \wedge e_{j,b,k} = (e_{i,a,j} - e_{j,b,k}) \wedge e_{i,a+b,k} \\ e_{i,a,j} \wedge e_{i,b,j} = 0 \end{matrix} \right\rangle
\]
and \(H_i(\Conf^\mathbb{Z}(\mathbb{C}); \mathbb{Q}) \cong A^i\). As an algebra, \(A\) is generated by \(A^1\), which is generated as an \(\FI_G\)-module in degree 2, and so by \thref{alg}, \(A^i\) is generated in degree \(2i\). So by Theorem 2.5, \(H_i(\Conf^\mathbb{Z}(\mathbb{C}); \mathbb{Q})\) satisfies \(K_0\)-stability with stability degree \(4i\).

\end{proof}
Furthermore, since the Coxeter group \(W_{\widetilde{A}_{n-1}} \cong \mathbb{Z}^{n-1} \rtimes S_n\), then \(V_0 / W \simeq \UConf'_n(\mathbb{C}^*)\), in the notation of \S3.1, and therefore \(\UConf'_n(\mathbb{C}^*)\) is a classifying space for the full Artin group \(B_{\widetilde{A}_{n-1}}\). Here, Callegaro-Moroni-Salvetti \cite[Thm 4.2]{CMS} computed that
\begin{align*}
H_i(B_{\widetilde{A}_{n-1}}; \mathbb{Q}) = \begin{cases} \mathbb{Q}[t^{\pm 1}]/(1 + t) & 1 \le i \le n - 2 \\ \mathbb{Q}[t^{\pm 1}]/(1 + t) & \text{if } i = n-1, n \text{ odd} \\ \mathbb{Q}[t^{\pm 1}]/(1 - t^2) & \text{if } i = n-1, n \text{ even} \end{cases}
\end{align*}
as \(\mathbb{Q}[G] = \mathbb{Q}[t^{\pm 1}]\)-modules. This verifies the stabilization as \(\mathbb{Q}[G]\)-modules for \(n \gg 0\) proven in Corolarry 3.5, and in fact stabilization occurs once \(n \ge i + 2\).

These spaces also have close connections to the Burau representation. First, if we let
\[\UConf_{n, 1}(\mathbb{C}) = \{(\{x_1, \dots, x_n\}; x) \mid x_i \ne x_j, x_i \ne x\}\] then there is a bundle 
\[\UConf_n(\mathbb{C}^*) \to \UConf_{n,1}(\mathbb{C}) \to \mathbb{C}.\]
Since \(\mathbb{C}\) is contractible, \(\UConf_n(\mathbb{C}^*) \simeq \UConf_{n,1}(\mathbb{C})\). Similarly, let
\[\UConf'_n(\mathbb{C}^*) = \{(\{x_1, \dots, x_n\}; y) \mid x_i \ne 0, x_i \ne x_j, e^y = x_1 \cdots x_n\}\]
and let
\[\UConf'_{n, 1} = \{(\{x_1, \dots, x_n\}; x, y) \mid x_i \ne x, x_i \ne x_j, e^y = (x_1 - x) \cdots (x_n - x)\}.\]
Then \(\UConf'_n(\mathbb{C}^*) \cong \UConf'_{n, 1}\). Finally, notice that the inclusion \(\UConf_n(\mathbb{C}^*) \hookrightarrow \UConf_n(\mathbb{C})\) is homotopy equivalent to the bundle projection \(\UConf(\mathbb{C})_{n,1} \twoheadrightarrow \UConf(\mathbb{C})\) onto the first \(n\) coordinates. We therefore obtain the following diagram:
\begin{equation*}
\begin{diagram}
\UConf'_n(\mathbb{C}^*) && \simeq && \UConf'_{n,1}(\mathbb{C}) \\
\dTo &&&& \dTo \\
\UConf_n(\mathbb{C}^*) && \simeq && \UConf_{n,1}(\mathbb{C}) \\
& \rdTo && \ldTo & \\
&& \UConf_n(\mathbb{C}) &&
\end{diagram}
\end{equation*}
Thus \(\pi_1(\UConf_n(\mathbb{C})) = B_n\) acts up to conjugacy on the homotopy fiber of the map from \(\UConf'_n(C^*)\), which is just the actual fiber of the map from \(\UConf'_{n,1}(\mathbb{C})\). Call this fiber \(X_f\) for \(f \in \UConf_n(\mathbb{C})\). This gives an honest action of \(B_n\) on \(H_1(X_f; \mathbb{C})\), which is the Burau representation. Furthermore, replacing the group \(G = \mathbb{Z}\) with \(\mathbb{Z}/d\mathbb{Z}\), this construction gives the Burau representation specialized at \(\zeta_d = e^{2\pi i/d}\). Chen has explored this connection and its relation to point-counts of superelliptic curves in \cite{WCh}; in his notation, with \(G = \mathbb{Z}/d\mathbb{Z}\), \(\UConf'_{n,1}(\mathbb{C})\) is called \(E_{n,d}\). 

Finally, we likewise obtain results on the pure braid group of type \(\widetilde{C}_n\).
\begin{thm}[\bfseries Homology of the type \(\widetilde{C}_n\) pure braid group]
\(H_*(P_{\widetilde{C}_n}; \mathbb{Q})\) is a finitely-generated type \(\FI_G\) module, for \(G = \mathbb{Z} \rtimes \mathbb{Z}/2\). For each \(i\), the \(W_n\)-representation \(H_i(P_{\widetilde{C}_n}; \mathbb{Q})\) satisfies \(K_0\)-stability with stability degree \(\le 4i\).
\end{thm}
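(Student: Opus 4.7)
The plan is to follow the pattern of the type $\widetilde{A}_{n-1}$ case with only minor modifications. First I would realize $P_{\widetilde{C}_n}$ as the fundamental group of an orbit configuration space: let $G = \mathbb{Z} \rtimes \mathbb{Z}/2$ act on $\mathbb{C}$ by $(k, \epsilon) \cdot z = \epsilon z + k$. The fixed-point set of any nontrivial element of $G$ is contained in $\frac{1}{2}\mathbb{Z}$, so with $M := \mathbb{C} \setminus \frac{1}{2}\mathbb{Z}$ the action is free and properly discontinuous. The condition $G z_i \cap G z_j = \emptyset$ for $i \ne j$ is precisely $z_i \pm z_j \notin \mathbb{Z}$, so $\Conf^G_n(M)$ is exactly the complement of the affine type $\widetilde{C}_n$ hyperplane arrangement in $\mathbb{C}^n$. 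A Fadell--Neuwirth tower projecting off one coordinate at a time, whose fibers are copies of $\mathbb{C}$ minus a countable discrete set and hence aspherical, then identifies $\Conf^G_n(M)$ as a $K(P_{\widetilde{C}_n}, 1)$, so that $H_*(P_{\widetilde{C}_n}; \mathbb{Q}) = H_*(\Conf^G_n(M); \mathbb{Q})$.

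Next I would apply \thref{confg_fgsharp}. The group $G$ is polycyclic; the orbifold $\mathbb{C}/G$ is topologically a closed disk with two interior order-$2$ cone points, and removing the two images of $\frac{1}{2}\mathbb{Z}$ exhibits $N = M/G$ as the interior of a pair of pants, hence the interior of a compact $2$-manifold with nonempty boundary. Since $M$ is homotopy equivalent to a countable wedge of circles, $H_*(M;\mathbb{Q})$ is of finite type. Thus \thref{confg_fgsharp} yields that $H_*(\Conf^G(M); \mathbb{Q})$ is a finitely-generated type $\FI_G\sharp$-module.

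To extract the quantitative bound I would verify that the type $\widetilde{C}_n$ arrangement is fiber-type in the sense of \cite{CX}, extended to allow the base $1$-arrangement to be a countably infinite discrete set: the projection $\mathbb{C}^n \to \mathbb{C}^{n-1}$ onto the first $n-1$ coordinates restricts to a fiber bundle on complements with fiber $\mathbb{C}$ minus a countable discrete set, and the recursion bottoms out at $\mathbb{C} \setminus \frac{1}{2}\mathbb{Z}$. Proposition 5.1 then produces an Orlik--Solomon presentation of $H_*(\Conf^G_n(M); \mathbb{Q})$ in degree $1$ by two families of classes: the $W_n$-orbit of $e_{1,g,2}$ for $g \in G$ (from hyperplanes $z_1 - \epsilon z_2 = k$), generated in degree $\le 2$, and orbits of self-hyperplane classes $y_{1,r}$ indexed by reflection classes $r \in G$ (from $z_1 = k/2$), generated in degree $\le 1$. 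Hence $H_1$ is generated in degree $\le 2$ as an $\FI_G$-module, and by \thref{alg}.2 each $H_i$ is generated in degree $\le 2i$. Combining this with Theorem 2.5 and the Gan--Li stability degree bound $2m$ for $\Ind^{\FI_G}$ of a degree-$m$ representation (used in the proof of Theorem 2.5 and in \thref{sharp_range}) yields $K_0$-stability with stability degree $\le 4i$.

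The hard part will be the topology in the first two steps: verifying asphericity of $\Conf^G_n(M)$, identifying $N = M/G$ precisely enough to apply \thref{confg_fgsharp}, and checking local triviality of the iterated projections in the presence of infinitely many ``absolute'' punctures at $\frac{1}{2}\mathbb{Z}$ that do not move with the base parameters. Once these are in hand, the Orlik--Solomon bookkeeping is a direct generalization of \thref{crg_fg}, with the self-hyperplane generators $y_{i,r}$ playing the role that the $z_i$ did there.
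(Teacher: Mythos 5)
Your proposal follows essentially the same route as the paper: realize $P_{\widetilde{C}_n}$ as $\pi_1$ of the orbit configuration space $\Conf^G_n(M)$ with $M = \mathbb{C}\setminus\tfrac{1}{2}\mathbb{Z}$ and $G = \mathbb{Z}\rtimes\mathbb{Z}/2$, apply \thref{confg_fgsharp} to get a finitely-generated type $\FI_G\sharp$-module, invoke the Cohen--Xicot\'encatl result (Prop.\ 5.1) for the Orlik--Solomon presentation despite the infinite arrangement, observe generation in degree $\le 2i$ via \thref{alg}, and conclude via Theorem 2.5. The only cosmetic difference is that the paper identifies the cover by an explicit formula $z \mapsto \bigl(\tfrac{1-e^{2\pi i z}}{1+e^{2\pi i z}}\bigr)^2$ with base $\mathbb{C}\setminus\{0,1\}$, while you verify the orbit condition directly from the affine action $(k,\epsilon)\cdot z = \epsilon z + k$ (and note that $\mathbb{C}/G$ is topologically a plane rather than a closed disk, though your conclusion that $N$ is the interior of a pair of pants is correct).
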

\begin{proof}
As \cite[\S4.3]{Mor} explains, the hyperplane complement for type \(\widetilde{C}_n\) is the following:
\[ Y_n = \{x \in \mathbb{C}^n \mid x_i \pm x_j \notin \mathbb{Z}, x_i \notin \tfrac{1}{2}\mathbb{Z}\} \]
Applying the map \(z \mapsto \exp(2\pi i z)\) componentwise gives a normal covering map \(Y_n \to Y'_n\) with deck group \(\mathbb{Z}^n\), where
\[ Y'_n = \{x \in (\mathbb{C}^*)^n \mid x_i \ne x_j^{\pm 1}, x_i \ne \pm 1\} \]
The map \(z \mapsto \frac{1-z}{1+z}\) is a self-homeomorphism of \(\mathbb{C} \setminus \{0,1,-1\}\) taking \(1/z\) to \(-z\). Applying this map componentwise to \(Y'_n\) therefore gives a homeomorphism
\[ Y'_n \cong \{x \in \mathbb{C}^n \mid x_i \ne \pm x_j, x_i \ne 0, 1, -1\} \]
Finally, applying the map \(z \mapsto z^2\) componentwise gives a normal covering map \(Y'_n \to \Conf_n(\mathbb{C} \setminus \{0,1\})\), with deck group \((\mathbb{Z}/2)^n\). In conclusion, if we write \(M = \mathbb{C} \setminus \frac{1}{2}\mathbb{Z}\), there is a normal cover \(M \to \mathbb{C} \setminus \{0,1\}\) given by \(z \mapsto \left( \dfrac{1 - e^{2\pi i z}}{1 + e^{2\pi i z}}\right)^2\) with deck group \(G = \mathbb{Z} \rtimes \mathbb{Z}/2\), and \(Y_n\) is homeomorphic to the orbit configuration space \(\Conf^G_n(M)\). 

By \thref{confg_fgsharp}, \(H_*(\Conf^G(M); \mathbb{Q})\) is a finitely-generated type \(\FI_G\sharp\)-module. Again, we use Prop 5.1 to obtain a presentation. The hyperplane arrangement consists of the hyperplanes \(E_{i,a,j} = \{z_i - z_j = a \mid a \in \mathbb{Z}\}\), \(H_{i,a,j} = \{z_i + z_j = a \mid a \in \mathbb{Z}\}\), and \(Z_{i,a} = \{2z_i \ne a \mid a \in \mathbb{Z}\}\). The Orlik-Solomon algebra is given by:
\[ A = \bigwedge \langle e_{i,a,j}, h_{i,a,j}, z_{i,b} \rangle \bigg/ \left\langle \begin{matrix} e_{i,a,j} \wedge e_{j,b,k} = (e_{i,a,j} - e_{j,b,k}) \wedge e_{i,a+b,k}; \;\; e_{i,a,j} \wedge e_{i,b,j} = 0 \\ 
e_{i,a,j} \wedge h_{j,b,k} = (e_{i,a,j} - h_{j,b,k}) \wedge h_{i,a+b,k}; \; \; h_{i,a,j} \wedge h_{i,b,j} = 0  \\
e_{i,a,j} \wedge h_{i,b,j} = (e_{i,a,j} - h_{i,b,j}) \wedge z_{i, a+b}; \; \; z_{i,a} \wedge z_{i,b} = 0 \\
z_{i,a} \wedge z_{j,b} = (z_{i,a} - z_{j,a}) \wedge h_{i, \frac{a+b}{2}, j} \text{ if } 2 \mid a + b, 0 \text{ otherwise}
\end{matrix} \right\rangle \]
and \(H_i(\Conf^G(M); \mathbb{Q}) \cong A^i\). In particular, we see that this \(\FI_G\sharp\)-module is generated in degree \(2i\). So by Theorem 2.5, \(H_i(\Conf^G(M); \mathbb{Q})\) satisfies \(K_0\)-stability with stability degree \(4i\).

\end{proof}
In particular, we can again use the calculation (\ref{irr_decom}) to write \(H_1\) as an induced module:
\begin{equation} \label{affC}
H_1(P_{\widetilde{C}_{n-1}}; \mathbb{Q}) = \Ind_1^{\FI_{G}} k[\mathbb{Z}] \oplus \Ind_2^{\FI_{G}} (V \oplus W)
\end{equation}
where \(V\) and \(W\) are both isomorphic as vector spaces to \(k[\mathbb{Z}]\), but as representations of \(k[\mathbb{Z} \times \mathbb{Z}]\), \(V\) has one copy of \(\mathbb{Z}\) acting positively and one acting negatively, whereas \(W\) has both acting positively. In this case, the image of \(V\) and \(W\). In this case, \([V] = [W] = 0\) in \(K_0(\mathbb{Z} \times \mathbb{Z})\), so we as expected we lose a lot of information in passing to \(K_0\). However, the first summand in (\ref{affC}) is retained in \(K_0\). To wit, the representation \(U = k[\mathbb{Z}]\) of \(\mathbb{Z} \rtimes \mathbb{Z}\) is just \(\Ind_{\mathbb{Z}/2}^{\mathbb{Z} \rtimes \mathbb{Z}/2} k\), so by Moody's induction theorem, this is nonzero in \(K_0(\mathbb{Z} \rtimes \mathbb{Z}/2)\). We therefore obtain, as we saw in the introduction,
\[ [H_1(P_{\widetilde{A}_{n-1}}; \mathbb{Q})] = [\Ind_{W_1 \times W_{n-1}}^{W_n} U] = [L((n)_U)] \]

\section{Automorphism groups of free products}
Given a group \(G\), let \(G^{*n} = G \ast \cdots \ast G\) be the \(n\)-fold free product. The group \(\Aut(G^{*n})\) contains a copy of \(\Aut(G) \wr S_n\), acting by automorphisms on each factor and permuting them. This normalizes the \emph{Fouxe-Rabinovitch} group \(\FR(G^{*n})\), the subgroup of \(\Aut(G^{*n})\) generated by partial conjugations, which conjugate the \(i\)-th free factor by the \(j\)-th free factor for some \(i \ne j\), and leaves fixed all factors besides the \(i\)-th.

Define the \emph{symmetric automorphism group} \(\Sigma\Aut(G^{*n})\) by
\[\Sigma\Aut(G^{*n}) = \FR(G^{*n}) \rtimes (\Aut(G)\,\wr\,S_n)\]
When \(G\) is freely indecomposable and not isomorphic to \(\mathbb{Z}\), then \(\Sigma\Aut(G^{*n})\) is in fact all of \(\Aut(G^{*n})\).

These groups have interesting geometric interpretations. For example, if \(G = \mathbb{Z}\) then \(\Sigma\Aut(G^{*n})\) is the fundamental group of \(n\) unknotted circles embedded in \(\mathbb{R}^3\), or the \emph{group of string motions}, and \(\FR(G^{*n})\) is the corresponding group of \emph{pure} string motions. More generally, Griffin \cite{Gr} constructed a classifying space for \(\FR(G^{*n})\), which he calls the \emph{moduli space of cactus products}.

The cohomology of these groups has also been much studied. For example, Hatcher-Wahl \cite{HW} and Collinet-Djament-Griffin \cite{CDG} proved that \(\Sigma\Aut(G^{*n})\) satisfy homological stability. On the other hand, the cohomology of \(\FR(G^{*n})\) does not stabilize: e.g., \(H^1(\FR(G^{*n})) = H^1(G)^{\oplus n(n-1)}\). 

However, notice that the exact sequence
\[1 \to \FR(G^{*n}) \to \Sigma\Aut(G^{*n}) \to \Aut(G) \wr S_n \to 1\]
gives an action of \(\Aut(G) \wr S_n\) on \(H^i(\FR(G^{*n}); \mathbb{Q})\). In fact, inner automorphisms of \(G\) act trivially on cohomology, so in fact we get a representation of \(\Out(G) \wr S_n\) on \(H^i(\FR(G^{*n}); \mathbb{Q})\). Wilson \cite[Thm 7.3]{Wi} proved that \(H^*(\FR(\mathbb{Z}^{*n}))\) is a finite type  \(\FI_{\mathbb{Z}/2}\)-algebra, since \(\Out(\mathbb{Z}) = \mathbb{Z}/2\).
\begin{thm}[\bfseries Finite generation for cohomology of Fouxe-Rabinovitch groups]
Suppose that \(G\) is any group such that \(H^*(G; k)\) is a finite type \(k\)-module. Then \(H^*(\FR(G^{*n}); k)\) is a finite type \(\FI_{\Out G}\sharp\)-algebra.
\end{thm}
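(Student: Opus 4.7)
My plan is to parallel Wilson's proof \cite[Thm 7.3]{Wi} of the case $G = \mathbb{Z}$ and extend it using Griffin's \cite{Gr} general classifying space construction for $\FR(G^{*n})$, namely the moduli space of cactus products. First, I would exhibit on this moduli space the structure of a homotopy $\FI_{\Out G}\sharp$-space. Morphisms on the nose arise from two natural operations on free products: the inclusions of free factors $G^{*m} \hookrightarrow G^{*n}$ (giving the ``on-support'' part of an $\FI$-map) and the retractions $G^{*n} \twoheadrightarrow G^{*m}$ killing some factors to identity (giving the ``partial''/$\sharp$ part), with $\Aut(G)$-decorations on factors acting coordinate-wise. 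Because inner automorphisms of each $G$-factor are realized by conjugation inside $\FR(G^{*n})$ itself, they act trivially on $H^*(\FR(G^{*n}); k)$, so the group-label data descend from $\Aut G$ to $\Out G$, giving an $\FI_{\Out G}\sharp$-structure on cohomology rather than merely an $\FI_{\Aut G}\sharp$-structure.

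Next, using Griffin's model I would identify the generators of the $\FI_{\Out G}\sharp$-algebra $H^*(\FR(G^{*n}); k)$. Griffin's cohomology calculation assembles $H^*$ out of two kinds of ingredients: classes pulled back from $H^*(G; k)$ living on individual factors, and Whitehead-style classes of degree $1$ indexed by ordered pairs $(i,j)$ with $i \neq j$ together with an element of $\Out G$ (coming from partial conjugations). The first ingredient is, at the level of $\FI_{\Out G}\sharp$-modules, of the form $V^{\otimes \bullet}$ for $V = H^*(G;k)$, which is handled directly by \thref{alg}.3 and is of finite type since $H^*(G;k)$ is; the second fits the pattern of $\Ind^{\FI_{\Out G}}$ of the regular $\Out G$-bimodule, exactly as in the modules $\langle e_{i,a,j}\rangle$ appearing in \thref{confg_fg} and \thref{crg_fg}, generated in degree $2$ as an $\FI_{\Out G}\sharp$-module.

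Combining these pieces via \thref{alg}.2 (applied to an $\FI_{\Out G}\sharp$-algebra generated in finite degree by a finite-type submodule) gives that each graded piece $H^i(\FR(G^{*n}); k)$ is a finitely generated $\FI_{\Out G}\sharp$-module with finite-dimensional sections, i.e. that $H^*(\FR(G^{*n}); k)$ is a finite-type $\FI_{\Out G}\sharp$-algebra.

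The step I expect to be most delicate is the very first one: checking that the retraction-type ``partial'' morphisms really do lift to well-defined maps of Griffin's classifying spaces up to homotopy, compatibly with the inclusion-type morphisms and with composition, so that one genuinely obtains an $\FI_{\Out G}\sharp$-structure (rather than only an $\FI_{\Out G}$-structure together with some unrelated retractions). If this turns out to be subtle, a fallback is to apply the Lyndon--Hochschild--Serre spectral sequence to $1 \to \FR(G^{*n}) \to \Sigma\Aut(G^{*n}) \to \Aut(G) \wr S_n \to 1$ and use the presentation of $\FR(G^{*n})$ by partial conjugations to identify the $E_2$-page as an $\FI_{\Out G}$-module presented in finite degree, then chase finite presentation degree through the pages via \thref{coh} and upgrade to finite type using the hypothesis on $H^*(G;k)$.
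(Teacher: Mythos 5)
Your plan of attack diverges from the paper in a way that matters, and the key identification of the generators is off. Griffin's Theorem B gives an isomorphism of algebras $H^*(\FR(G^{*n})) \cong H^*((G^{*n})^{n-1})$, so the whole algebra is generated by the single $\FI_{\Out G}\sharp$-module $V_n = \bigoplus_{i \ne j} \widetilde{H}^*(G)_{i,j}$, i.e.\ one copy of $\widetilde H^*(G)$ for each ordered pair $(i,j)$ with $i \ne j$. This generating module is \emph{not} a combination of a $V^{\otimes\bullet}$ piece coming from $H^*(G)$ and a piece of ``degree-1 Whitehead classes'' indexed by pairs and by elements of $\Out G$; that description only matches the special case $G = \mathbb{Z}$, where $\widetilde H^*(G)$ happens to be concentrated in degree one and one-dimensional. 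For general $G$ the generators live in whatever degrees $\widetilde H^*(G)$ is supported in, and they are not indexed by elements of $\Out G$ at all---rather, $\Out G$ acts on $\widetilde H^*(G)_{i,j}$ through pullback in one of the two coordinates. Concretely, a partial morphism $(s,\vec\phi): [m]\to[n]$ acts by $\alpha_{i,j}\mapsto \phi_j^*\,\alpha_{s(i),s(j)}$ when $i,j$ lie in the domain of $s$, and by zero otherwise; this is not the regular $\Out(G)$-bimodule $\Ind^{\FI_{\Out G}}(k[\Out G])$ that shows up in the configuration-space computations you're modeling on. So the analogy with the modules $\langle e_{i,a,j}\rangle$ in \thref{confg_fg} and \thref{crg_fg} is superficial and breaks if one tries to push it.

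Two secondary points. First, you need not wrestle with lifting the retraction morphisms to homotopy self-maps of Griffin's moduli space: the formula above is a well-defined $\FI_{\Out G}\sharp$-action directly on the generating module, and one checks it extends to an algebra action, so the $\sharp$-structure can be installed entirely on cohomology (this is exactly how the paper avoids your ``most delicate step''). Second, your Lyndon--Hochschild--Serre fallback would not recover the theorem as stated: for a general $G$ (not assumed virtually polycyclic) it could at best give finite presentation degree rather than finite type, and it gives no $\sharp$-structure, which is precisely what the theorem asserts and what is later used to deduce character polynomials and an explicit $4i$ stable range.
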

Notice that, for example, there are many groups with even no nontrivial outer automorphisms, for which Theorem 6.1 tells us that \(H^*(\FR(G^{*n}); \mathbb{Q})\) is a finite type \(\FI\sharp\)-algebra.

\begin{proof}
We first need to show that \(H^*(\FR(G^{*n}))\) is, in the first place, an \(\FI_{\Out(G)}\sharp\)-algebra. Griffin \cite[Thm B]{Gr} proved that \(H^*(\FR(G^{*n})) \cong H^*((G^{*n})^{n-1})\) as an algebra, so \(H^*(\FR(G^{*n}))\) is generated by \(\widetilde{H}^*(G)^{\oplus n(n-1)} \cong \bigoplus_{i \ne j} \widetilde{H}^*(G)\). Under this isomorphism, for any \(\alpha \in \widetilde{H}^*(G)\), write \(\alpha_{i,j}\) for the element \(\alpha_{i,j} \in \bigoplus_{i \ne j} \widetilde{H}^*(G)\) sitting in the \((i,j)\)-th summand. The map induced by an \(\FI_{\Out(G)}\sharp\) morphism \((s,\vec \phi): [m] \to [n]\) is:
\begin{align*} (s,\vec \phi)_*: \bigoplus_{i \ne j} \widetilde{H}^*(G) &\to \bigoplus_{i \ne j} \widetilde{H}^*(G) \\
\alpha_{i,j} & \mapsto \begin{cases} \phi^*_j\,\alpha_{s(i), s(j)} & \text{if } i, j \in \operatorname{domain}(s) \\
0 & \text{otherwise} \end{cases}
\end{align*}
Therefore  \(V_n = \bigoplus_{i \ne j} \widetilde{H}^*(G)_{i,j}\) has the structure of an \(\FI_{\Out(G)}\sharp\)-module. \(V\) is generated in finite degree, since it is just generated by \(\widetilde{H}^*(G)_{1,2}\), and each \(V^i_n\) is finite-dimensional, since by assumption each \(H^j(G)\) is, so therefore \(V\) is finitely generated. Furthermore, the action of \(\FI_{\Out(G)}\sharp\) on \(V\) extends to an algebra map of \(H^*(\FR(G^{*n}))\), which is thus an \(\FI_{\Out(G)}\sharp\)-algebra. Since it is generated by the finite type module \(V\), then \(H^*(\FR(G^{*n}))\) is finite type by \thref{alg}.
\end{proof}
\begin{cor}[\bfseries Representation stability for cohomology of Fouxe-Rabinovitch groups]
Suppose that \(G\) is any group with \(H^*(G; \mathbb{Q})\) of finite type and \(\Out G\) finite. Then for each \(i\), the characters of the \(\Out(G) \wr S_n\)-representations \(H^i(\FR(G^{*n}); \mathbb{C})\) are given by a single character polynomial of degree \(2i\) for all \(n\), and therefore \(H^i(\FR(G^{*n}); \mathbb{C})\) is representation stable, with stability degree \(4i\).
\end{cor}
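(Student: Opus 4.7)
The approach is a direct combination of Theorem 6.1 with the character polynomial machinery from \thref{sharp_range}. The plan is as follows.

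First, by Theorem 6.1, $H^*(\FR(G^{*n}); \mathbb{C})$ is a finite type $\FI_{\Out G}\sharp$-algebra, generated as an algebra by the submodule $V$ with $V_n = \bigoplus_{i \ne j} \widetilde{H}^*(G)_{i,j}$. Unpacking the proof of Theorem 6.1, one sees that $V$ is generated as an $\FI_{\Out G}$-module in degree $2$ (by $\widetilde{H}^*(G)_{1,2}$, from which any $\widetilde{H}^*(G)_{i,j}$ is obtained by an appropriate injection $[2] \hookrightarrow [n]$), and it satisfies $V^0 = 0$ because we use reduced cohomology.

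Next, I would apply part (2) of \thref{alg} to conclude that the $i$-th cohomological graded piece $H^i(\FR(G^{*n}); \mathbb{C})$ is generated as an $\FI_{\Out G}$-module in degree $2 \cdot i = 2i$. Intuitively, a cohomology class of degree $i$ is a sum of products of at most $i$ generators drawn from $V$ (since each generator has cohomological degree $\ge 1$), and each such generator lives in $\FI_{\Out G}$-degree $\le 2$, so the product lies in $\FI_{\Out G}$-degree $\le 2i$. The $\FI_{\Out G}\sharp$-structure on $H^*(\FR(G^{*n}); \mathbb{C})$ from Theorem 6.1 manifestly respects the cohomological grading, so each $H^i$ is itself an $\FI_{\Out G}\sharp$-module.

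Finally, since $\Out G$ is finite by hypothesis, the character polynomial theory of \S2.2 applies to $\FI_{\Out G}$-modules. I would therefore invoke \thref{sharp_range} to conclude that the character of $H^i(\FR(G^{*n}); \mathbb{C})$ is given by a single character polynomial of degree $\le 2i$, and that $H^i(\FR(G^{*n}); \mathbb{C})$ satisfies representation stability with stability degree $\le 4i$. There is no real obstacle — the corollary is essentially bookkeeping on top of Theorem 6.1 and the established machinery — but the one point requiring care is to track that $V$'s generators already live in $\FI_{\Out G}$-degree $2$ (rather than $1$), which accounts for the factor of $2$ in the final bounds.
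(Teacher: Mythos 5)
Your proof is correct and follows essentially the same route as the paper: Theorem 6.1 gives that $H^*(\FR(G^{*n}))$ is a finite type $\FI_{\Out G}\sharp$-algebra generated by a submodule $V$ generated in degree $2$, then \thref{alg}(2) yields generation degree $2i$ for $H^i$, and \thref{sharp_range} gives the character polynomial bound and stability degree $4i$. Your added remarks on $V^0 = 0$ (reduced cohomology) and on the $\FI_{\Out G}\sharp$-structure respecting the cohomological grading are accurate refinements of what the paper leaves implicit.
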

\begin{proof}
As shown in the proof of Theorem 7.1, \(H^*(\FR(G^{*n})\) is generated as an algebra by \(V\), which is generated as an \(\FI_G\)-module by \(\widetilde{H}^*(G)_{1,2}\), and so \(V\) is generated in degree 2. By \thref{alg}, \(H^i\) is generated in degree \(2i\), and so the stable range follows from \thref{sharp_range}.
\end{proof}
\bibliography{general}

\end{document}